\documentclass[11pt,oneside,final]{amsart}
\usepackage{stmaryrd}
\title{An inverse free boundary problem}

\author[C. I. C\^{a}rstea]{C\u{a}t\u{a}lin I. C\^{a}rstea}
\address{Department of Applied Mathematics, National Yang Ming Chiao Tung University, Hsinchu, Taiwan}
\email{catalin.carstea@gmail.com}

\author[M. Lassas]{Matti Lassas}
\address{Department of Mathematics and Statistics, University of Helsinki, P.O. Box 68 (Pietari Kalmin katu 5), FI-00014 University of Helsinki, Finland}
\email{matti.lassas@helsinki.fi}

\author[J. Lu]{Jinpeng Lu}
\address{School of Mathematical Sciences, University of Science and Technology of China, Hefei 230026, China}
\email{jinpeng.lu@ustc.edu.cn}

\author[L. Oksanen]{Lauri Oksanen}
\address{Department of Mathematics and Statistics, University of Helsinki, P.O. Box 68 (Pietari Kalmin katu 5), FI-00014 University of Helsinki, Finland}
\email{lauri.oksanen@helsinki.fi}

\author[Z. Zhao]{Ziyao Zhao}
\address{Department of Mathematics and Statistics, University of Helsinki, P.O. Box 68 (Pietari Kalmin katu 5), FI-00014 University of Helsinki, Finland}
\email{ziyao.zhao@helsinki.fi}

\IfFileExists{tweakslo.sty}{\usepackage{tweakslo}}{
\usepackage{amssymb,thmtools,mathtools,todonotes,amsmath}
\declaretheorem{theorem,definition,lemma,proposition,corollary,remark}}

\usepackage{tikz}
\usetikzlibrary{arrows.meta,calc}

\def\p{\partial}
\def\o2{\overline{O_2}}

\def\RR{\mathbb R}

\def\BB{\mathbb B}

\def\cK{\mathcal K}
\def\cKpar{\mathcal{K}^{t}}

\def\cH{\mathcal H}
\def\cA{\mathcal A}
\def\cApar{\mathcal{A}^{t}}

\def\tphi{\widetilde{\varphi}}
\def\tM{\widetilde{M_0}}

\def\dist{\text{dist}}

\def\wv{\widetilde{v}}

\def\Lambdapar{\Lambda^{t}}

\def\Cap{\operatorname{cap}}
\DeclareMathOperator{\supp}{supp}
\def\limess{\qopname\relax m{lim\,ess}}
\DeclarePairedDelimiter{\norm}{\lVert}{\rVert}
\DeclarePairedDelimiter{\abs}{\lvert}{\rvert}

\date{\today}

\begin{document}
\begin{abstract}
We study inverse problems for the elliptic and parabolic obstacle problems from boundary measurements. For the classical elliptic obstacle problem with strictly superharmonic obstacle function, we show that the Dirichlet-to-Neumann map admits a one-sided linearization at every boundary datum lying strictly above the obstacle. The linearized map is the Dirichlet-to-Neumann map for a rough Dirichlet problem on the a priori unknown non-contact set. We show that these linearized Cauchy data uniquely determine the non-contact set up to set of Sobolev $2$-capacity zero and consequently determine the obstacle.
Our result applies to the inverse problem for a parabolic obstacle problem where both the coefficient and the obstacle function are time-independent by reducing to the elliptic inverse problem.
\end{abstract}
\maketitle

\tableofcontents

\section{Introduction}
Variational inequalities arise from the minimization of energy functionals over a convex set of constraints in the calculus of variations, giving rise to a system of differential inequalities instead of the classical Euler-Lagrange equations. 
The systematic study of variational inequalities began in the 1960s motivated by the Signorini problem in elasticity \cite{Fichera64} and by questions in potential theory \cite{Stampacchia64}, and was later developed into a general theory in \cite{LS67,Brezis68}.
A characteristic feature of such constrained problems is that they produce free boundaries: the a priori unknown interface surrounding the domain where the constraint is active. 
As the prototypical example of this class, the obstacle-type problems appear in several fundamental models across mechanics, fluid dynamics, biology and finance, see e.g. \cite{DL76,Chipot84, Rodrigues1987,KO88}.

\subsection{Elliptic obstacle problem}
Let $\Omega\subset \mathbb R^n$ be a bounded connected open set with smooth boundary and $\varphi\in C^{\infty}(\overline\Omega)$ modeling the obstacle.
We denote the set of smooth Dirichlet boundary values that are compatible with the obstacle $\varphi$ by $\cA_{\varphi}$, that is
\begin{equation}
    \cA_{\varphi}: = \{f\in C^\infty(\p \Omega) \mid f \geq \varphi|_{\p \Omega}\}.
\end{equation}
In the classical formulation of the obstacle problem,
one seeks to minimize the Dirichlet energy
\begin{equation} \label{def-energy-intro}
E(u)=\int_{\Omega} |\nabla u(x)|^2\,dx
\end{equation}
among all functions in the convex set
\begin{equation}\label{def-K-intro}
\cK_{\varphi,f}=\{v\in H^1(\Omega): v\geq \varphi \text{ a.e. in }\Omega,\ v|_{\partial \Omega}=f\},
\end{equation}
given the compatibility condition $f\in \cA_\varphi$.
Then the calculus of variations yields the variational inequality for minimizer $u$,
\begin{equation}\label{def-variational-intro}
\int_{\Omega} \nabla u\cdot \nabla (v-u)\,dx \geq 0,
\qquad \text{for all } v\in \cK_{\varphi,f}.
\end{equation}
This variational problem has a unique minimizer by \cite{LS67}, and the optimal regularity of the minimizer is of $C^{1,1}_{\text{loc}}(\Omega)$ by \cite{Frehse1972}.
Thus the minimizer $u$ solves
\begin{equation}
    \label{def-obstacle-intro}
    \begin{cases}
      u\geq \varphi & \textrm{ in } \Omega, \\
      \Delta u \leq 0 & \textrm{a.e. in } \Omega, \\
      \Delta u=0 &\textrm{ in } \{x : u(x) > \varphi(x)\}, \\
      u|_{\partial \Omega} =f,
    \end{cases}
  \end{equation}
or, equivalently,
\begin{equation}
    \label{def-obstacle-intro-min}
      \min\{-\Delta u, u-\varphi\}=0 \quad \textrm{a.e. in }\Omega,
  \end{equation}
subject to the Dirichlet boundary condition $u|_{\partial \Omega}=f$. 
We consider the following inverse problem for the classical obstacle problem.

\medskip
\noindent\textbf{Inverse obstacle problem.} \emph{
Is it possible to determine the obstacle $\varphi$ from boundary measurements made for the minimizers?}
\medskip



The main difficulty in this type of inverse problems is that the constraint may be hidden from the observable boundary data. In the classical obstacle problem, the obstacle condition is only activated in the contact set $\{x\in \Omega:\ u(x)=\varphi(x)\}$ which is unknown a priori.
This implicit dependence of the contact set on the solution itself poses a fundamental challenge for both the regularity theory of the free boundary and the analysis of inverse problems \cite{Figalli-ICM2018,Ros2018-survey,HKS26}.
In an early work \cite{lewy1979}, Lewy showed that the obstacle $\varphi$ can be reconstructed given a prescribed family of contact sets for the obstacles $\varphi+c$ with constants $c>0$. For our inverse problem with boundary measurements, the contact sets are unknown and thus need to be recovered rather than assumed.

Specifically for our inverse obstacle problem, observe that the superharmonicity of obstacles is necessary for unique determination.
Namely, if one allows the obstacle to have $\Delta \varphi (x_0)>0$ at some point $x_0\in \Omega$, then the obstacle condition is not active in some neighborhood of $x_0$ by the strong maximum principle, and thus a small perturbation of the obstacle near $x_0$ does not affect at all the boundary measurements of solutions (for any given boundary value).
Even for superharmonic obstacles, any finite number of boundary measurements cannot uniquely determine the obstacle. 
This can be seen by picking a small ball $B$ in the complement of the union of the free boundaries for any finite number of given boundary values, and perturbing the obstacle function in $B$.
Indeed, if the small ball $B$ is contained in the contact set for a given boundary value, then perturbing the obstacle $\varphi$ to $\varphi-h$, where $h\in C_0^{\infty}(B)$ with $h\geq 0$ and $\Delta h > \Delta \varphi$, does not change the solution on $\Omega\setminus B$ by the maximum principle.
If $B$ is contained in the non-contact set for a given boundary value, then the obstacle condition is not active in $B$ so perturbations in $B$ are invisible to the boundary measurements.


In this paper, we consider the Dirichlet-to-Neumann map
\begin{align}
\label{eq:def_elliptic_DN}
    \Theta_\varphi : \cA_\varphi \to C(\p\Omega),\\
    \Theta_\varphi(f)=\partial_\nu u_\varphi^f\big|_{\partial\Omega},
\end{align}
where $u_\varphi^f$ is the solution to \eqref{def-obstacle-intro} associated with the obstacle $\varphi$ and the Dirichlet boundary value $f$.
Our first main theorem is stated as follows.

\smallskip
\begin{theorem} \label{thm-main-1}
Let $\Omega\subset \RR^n$ be a bounded open set with smooth boundary. Suppose the obstacles $\varphi_1,\varphi_2\in C^{2,1}(\overline\Omega)$ satisfy
\[
\varphi_1|_{\p\Omega}=\varphi_2|_{\p\Omega},
\qquad
\Delta\varphi_1<0,\ \Delta\varphi_2<0 \quad\text{in }\Omega.
\]
Let $\Lambda_{\varphi_1}, \Lambda_{\varphi_2}$ be the corresponding elliptic Dirichlet-to-Neumann maps defined in \eqref{eq:def_elliptic_DN} associated with the obstacle $\varphi_1, \varphi_2$, respectively. 
If $\Theta_{\varphi_1} = \Theta_{\varphi_2}$, then $\varphi_1=\varphi_2$ in $\Omega$.
\end{theorem}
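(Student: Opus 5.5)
We follow the strategy announced in the abstract: linearize $\Theta_\varphi$ one-sidedly, identify the linearized map with the Dirichlet-to-Neumann map of a rough Dirichlet problem on the non-contact set, and recover that set and then the obstacle.

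\emph{Step 1: one-sided linearization.} Fix $f\in\cA_\varphi$ with $f>\varphi|_{\p\Omega}$. Let $u=u_\varphi^f$, with contact set $I=\{u=\varphi\}$ and non-contact set $U=\{u>\varphi\}$. For $g\in C^\infty(\p\Omega)$ and $\epsilon\downarrow0$, set $u_\epsilon=u_\varphi^{f+\epsilon g}$; this is admissible since $f>\varphi$ on $\p\Omega$. We will show that $(u_\epsilon-u)/\epsilon\to w$ in $H^1(\Omega)$, where $w$ solves
\[
\Delta w=0\ \text{in }U,\qquad w=0\ \text{q.e. on } I,\qquad w|_{\p\Omega}=g .
\]
Strict superharmonicity $\Delta\varphi<0$ is what makes the limit problem linear rather than a variational inequality on the coincidence set. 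The reason is that $-\Delta u=-\Delta\varphi\ge c>0$ a.e. on $I$, so the contact set is nondegenerate and the perturbed contact set stays close to $I$ (in capacity). This makes the limit problem the Dirichlet problem on the open set $U$, taken in the sense of $H^1_0$ relative to $I$. We expect this to be the hardest step. The first thing we would try is to compare $u_\epsilon$ with $u\pm\epsilon\,\tilde w$ using comparison for the obstacle problem, and to use the nondegeneracy $\Delta\varphi<0$ to show that the set $\{u_\epsilon>\varphi\}\setminus U$ shrinks, with its capacity tending to $0$. Passing to the limit in the variational inequality \eqref{def-variational-intro} then gives the equation for $w$. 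The normal derivatives converge because $U$ contains a collar of $\p\Omega$, where everything is harmonic, so interior and boundary estimates apply there.

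\emph{Step 2: recovering $U$.} Equality $\Theta_{\varphi_1}=\Theta_{\varphi_2}$ gives equality of the linearized maps. These are the DN maps $N_{U_1},N_{U_2}$ of the Laplacian on $U_1,U_2$. Before comparing them we must check that $u_1^f=u_2^f$ near $\p\Omega$. Both are harmonic near $\p\Omega$ with the same Cauchy data, so unique continuation gives $u_1=u_2$ on the connected component $W$ of $U_1\cap U_2$ touching $\p\Omega$. Now suppose $U_1\ne U_2$ up to capacity zero. Then there is a portion $E$ of $U_2\setminus \overline W$, say, sitting in the complement of $U_1$. Equal Cauchy data for the harmonic solutions $w_1,w_2$ give $w_1=w_2$ on $W$. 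On $\p W\cap I_1$ we have $w_1=0$, so $w_2=0$ on the part of $\p W$ adjacent to $E$. We would then argue, as in the Calderón-type proofs for domain determination, that choosing $g\ge0$ makes $w_2>0$ in $U_2$ by the strong maximum principle. This contradicts $w_2=0$ quasi-everywhere on a positive-capacity portion of $\p W$ interior to $U_2$. Hence $U_1=U_2=:U$ up to capacity zero. The same argument, run on the nonlinear solutions, gives $u_1=u_2$ on $U$.

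\emph{Step 3: recovering the obstacle.} On $I=\Omega\setminus U$ we know $u_j=\varphi_j$, and $u_1=u_2$ everywhere: on $U$ by Step 2, and on $I$ by continuity together with the $C^{1,1}$ regularity. Hence $\varphi_1=\varphi_2$ on $I^f$ for every admissible $f$. To cover all of $\Omega$, take $f=\varphi|_{\p\Omega}+c$ with $c\downarrow0$, or more generally boundary data only slightly above $\varphi$. As $c\to0$ the solution converges to $u^{\varphi|_{\p\Omega}}$, which equals $\varphi$ because $\varphi$ is superharmonic. The contact sets then exhaust $\Omega$ up to a neighbourhood of $\p\Omega$ that shrinks, so every interior point lies in some $I^f$. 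Continuity of $\varphi_j$ then gives $\varphi_1=\varphi_2$ on $\overline\Omega$.
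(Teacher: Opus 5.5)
Your outline (one-sided linearization, recovery of the non-contact set, then recovery of the obstacle) follows the paper's architecture. However, Steps 1 and 3 fail as written, and Step 2 omits the analytic ingredient that makes it work.

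\textbf{Step 3.} From $u_1=u_2$ on $U$ you only get $u_1=u_2$ on $\overline U$. This gives $\varphi_1=\varphi_2$ essentially only on the free boundary $\partial U\cap\Omega$. On the interior of $I$ one has $u_j=\varphi_j$, so the claim that $u_1=u_2$ on $I$ by continuity and $C^{1,1}$ regularity just restates the conclusion; nothing in Steps 1--2 gives it. Consequently neither $\varphi_1=\varphi_2$ on $I^f$ nor the exhaustion argument follows. You would need every interior point to lie on a \emph{free boundary} for some datum, and that requires a separate nondegeneracy argument you have not supplied.

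The paper reaches the interior of the contact set by integrating in the amplitude. Set $U_i(\lambda)=u^{\lambda f}_{\tphi_i}$. Equality of the non-contact sets up to capacity zero gives $H^1_0(H_{\tphi_1,\lambda})=H^1_0(H_{\tphi_2,\lambda})$, so the left derivatives $V_1(\lambda)=V_2(\lambda)$ agree for every $\lambda$. Since $\lambda\mapsto U_i(\lambda)\in H^1(\Omega)$ is locally Lipschitz, $U_1-U_2$ is constant in $\lambda$. It vanishes for large $\lambda$, where the obstacle is inactive. Hence $U_1(\lambda)=U_2(\lambda)$ on all of $\Omega$, including the contact sets, and letting $\lambda\downarrow 0$ gives $\tphi_1=\tphi_2$.

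\textbf{Steps 1--2.} Your linearization is false for increasing data. The condition $\Delta\varphi<0$ gives nondegeneracy only a.e.\ on $I$, and $I$ may contain pieces of Lebesgue measure zero but positive capacity. For example, take $\Omega=B(0,3)$, base datum $f=0$, and $\varphi=-\frac12\bigl(x_n^2+\theta(|x'|^2)\bigr)$, where $\theta$ is smooth and convex with $\theta=0$ on $(-\infty,1]$ and $\theta>0$ on $(1,\infty)$. Then:
\begin{itemize}
\item $\Delta\varphi\le -1$ and $u^0\equiv 0$;
\item $I$ is the flat disk $\{x_n=0,\ |x'|\le 1\}$;
\item for $g>0$ and $\epsilon>0$ one has $u^{\epsilon g}=\epsilon G$, with $G$ the harmonic extension of $g$.
\end{itemize}
So the difference quotient is $G>0$ on $I$, and $\{u_\epsilon>\varphi\}\setminus U=I$ for every $\epsilon$; the capacity-shrinking mechanism you propose cannot hold. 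Your choice $g\ge 0$ in Step 2 is exactly this bad direction. The paper perturbs only downward ($\varepsilon<0$, $h>0$). Then $K_g\subset K_{g+\varepsilon h}$, the quotients vanish identically on $K_g$, and membership in $H^1_0(H_g)$ is immediate, with no capacity estimate needed.

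In Step 2, your contradiction requires passing from $\widetilde{w}_1=0$ q.e.\ on $I_1$ to information about the positive harmonic function $w_2$ near $\partial W\cap U_2$. The statement $\widetilde{w}_1=0$ concerns averages over full balls. Transferring it requires positive density of the non-contact region at q.e.\ point of a positive-capacity piece of the free boundary. The paper supplies this with Lemma~\ref{lm:free_db_density}, together with a finite-perimeter and reduced-boundary argument. You must also treat the thin case $I_1\cap U_2\subset\partial W$, where your portion $E\subset U_2\setminus\overline W$ need not exist.
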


\smallskip

Our idea of proof is as follows. After subtracting the common harmonic lift of the boundary values, the problem is reduced to obstacles vanishing on the outer boundary. 
We show that the nonlinear Dirichlet-to-Neumann map admits a left linearization at every strictly positive boundary datum, and that the derivative is the Dirichlet-to-Neumann map of a Dirichlet problem on the rough unknown non-contact set. At the level of this linearized problem we prove that the outer Cauchy data uniquely determine the non-contact set up to sets of Sobolev $2$-capacity zero.
The notion of Sobolev $2$-capacity arises naturally in this context. Owing to the roughness of the non-contact set, the optimal regularity expected for the solution to the associated Dirichlet problem is merely $H^1$, whose pointwise behavior is only defined quasi-everywhere, that is, up to sets of Sobolev $2$-capacity zero.
Finally, by varying scalar multiples of a fixed positive boundary value and integrating the resulting one-parameter family of left linearizations, we recover the obstacle itself.

\subsection{Parabolic obstacle problem}
We next consider a parabolic obstacle problem \cite{ACM-parabolic} on $[0,\infty)\times \Omega$ associated with the operator $\partial_t-c(x)\Delta$
and a time-independent obstacle function $\varphi=\varphi(x)$, where the scalar coefficient $c(x)>0$ is smooth and time-independent. Let $M_0$ be a constant with $\varphi(x)<M_0$ in $\Omega$, and assume that the boundary value $\varphi|_{\partial\Omega}$ is known. We define the admissible class of Dirichlet data by
\begin{equation}
  \cApar_{\varphi,M_0}:=
  \{f\in C^\infty([0,\infty)\times \partial\Omega):
  f(0,\cdot)=M_0,\ f(t,x)>\varphi(x)|_{\partial\Omega} \textrm{ for all }t\}.
\end{equation}
For $f\in \cApar_{\varphi,M_0}$, we define the convex set 
  \begin{equation}
    \cKpar_{\varphi,f} : = \{u\in H^1([0,\infty)\times \Omega)\mid u(0,\cdot) = M_0,\ u|_{[0,\infty)\times \p\Omega} = f|_{[0,\infty)\times \p\Omega},\ u\geq \varphi\ a.e.\}.
  \end{equation}
The parabolic variational inequality problem with obstacle $\varphi$ amounts to finding $u\in \cKpar_{\varphi,f}$ satisfying
\begin{equation}
\label{eq:par_obstacle_weak}
    \int_{\Omega}\p_t u (t,x) (v(t,x)-u(t,x)) + c(x) \nabla u(t,x)\nabla(v(t,x)-u(t,x))\, dx\geq 0,\ a.e.\ t\in [0,\infty),
\end{equation}
for all $v\in \cKpar_{\varphi,f}$.
We denote
\[
W^{1,2}_p([0,T]\times \Omega)
:=
\{u: D_x^2u,\ D_xu,\ \p_t u\in L^p([0,T]\times \Omega)\}.
\]
In view of \cite[Theorem 8.2]{Friedman82}, the solution $u\in W^{1,2}_p([0,T]\times \Omega)$ for every $T>0$ and $1<p<\infty$. Thus $u$ solves
\begin{equation}
  \label{eq:par_obstacle_strong}
  \begin{cases}
    (\partial_t-c(x)\Delta)u(t,x)\geq 0,&\text{a.e. in }[0,\infty)\times\Omega,\\
    u(t,x)\geq \varphi(x),&\text{a.e. in }[0,\infty)\times\Omega, \\
    (\partial_t u(t,x)-c(x)\Delta u(t,x))\cdot (u(t,x)-\varphi(x))=0,&\text{a.e. in }[0,\infty)\times\Omega,\\
    u|_{[0,\infty)\times \partial\Omega}=f,\qquad u(0,\cdot)=M_0.
  \end{cases}
\end{equation}
Then we define the nonlinear parabolic Dirichlet-to-Neumann map by
\begin{align}
  \label{eq:def_Lambda}
  \Lambdapar_{c,\varphi}:\cApar_{\varphi,M_0}&\to C([0,\infty)\times \partial\Omega),\\
  \Lambdapar_{c,\varphi}(f)&=\partial_\nu u_\varphi^f|_{[0,\infty)\times \partial\Omega}.
\end{align}
The inverse parabolic obstacle problem is to determine the obstacle $\varphi$ and the coefficient $c$ from the parabolic Dirichlet-to-Neumann map.

The first step of solving the inverse problem is the coefficient recovery. Since the constant state $M_0$ lies strictly above the obstacle, sufficiently small perturbations around $M_0$ do not activate the obstacle, and hence the Dirichlet-to-Neumann map for the parabolic obstacle problem reduces to the Dirichlet-to-Neumann map for the linear equation $(\partial_t-c(x)\Delta)u=0$. This allows us to recover the coefficient $c$ through the classical theory of inverse problems. The second step is a reduction to the inverse elliptic obstacle problem: once the boundary values become stationary, the solution for the parabolic obstacle problem converges to the solution for the corresponding elliptic obstacle problem as $t\to\infty$, and the equality of parabolic Dirichlet-to-Neumann maps implies the equality of the induced elliptic Dirichlet-to-Neumann maps.

Our second main result of the paper is stated as follows.


\smallskip
\begin{theorem}
  \label{thm:main}
  Let $\Omega\subset \RR^n$ be a bounded open set with smooth boundary.
  Let $\varphi_1,\varphi_2\in C^{2,1}(\overline\Omega)$ satisfy
\[
\varphi_1|_{\p\Omega}=\varphi_2|_{\p\Omega},
\qquad
\Delta\varphi_1<0,\ \Delta\varphi_2<0 \quad\text{in }\Omega,
\]
and $\varphi_1,\varphi_2<M_0$ for some constant $M_0$. Let $c_1,c_2\in C^\infty(\overline\Omega)$ be strictly positive. 
  Suppose that $\Lambdapar_{c_1,\varphi_1}=\Lambdapar_{c_2,\varphi_2}$.
  Then $c_1=c_2$ and $\varphi_1=\varphi_2$ in $\Omega$.
\end{theorem}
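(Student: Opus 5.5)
The proof has two steps, as outlined in the introduction: first recover $c$ from small perturbations of the constant state, then reduce to Theorem \ref{thm-main-1} by letting $t\to\infty$.

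\emph{Step 1: recovery of $c$.} Take $f=M_0+\varepsilon g$ with $g\in C_0^\infty((0,\infty)\times\p\Omega)$ and $\varepsilon$ small. Let $w$ solve the linear problem $(\p_t-c_j\Delta)w=0$ with $w|_{[0,\infty)\times\p\Omega}=f$ and $w(0,\cdot)=M_0$. Then $w-M_0$ is controlled by $\varepsilon\|g\|$ uniformly on $[0,\infty)\times\overline\Omega$ by the parabolic maximum principle. Since $\varphi_j\le \max\varphi_j<M_0$, for $\varepsilon$ small we get $w>\varphi_j$ everywhere. Hence $w$ satisfies \eqref{eq:par_obstacle_strong}, and by uniqueness for the variational inequality \eqref{eq:par_obstacle_weak} it equals $u_{\varphi_j}^f$. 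So $\Lambdapar_{c_j,\varphi_j}(f)=\varepsilon\,\p_\nu v_j^g$, where $v_j$ solves $(\p_t-c_j\Delta)v_j=0$ with zero initial data and boundary value $g$. Dividing by $\varepsilon$, the linear parabolic Dirichlet-to-Neumann maps for $\p_t-c_1\Delta$ and $\p_t-c_2\Delta$ agree on these data.

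Next, take the Laplace transform in $t$ (or use the standard reduction to the elliptic spectral data). This shows that the Dirichlet-to-Neumann maps of $-c_j\Delta+\lambda$, equivalently of $\Delta+\lambda/c_j$, coincide for all $\lambda>0$. Equivalently, the boundary spectral data of $-c_j\Delta$ coincide. In the Schrödinger form $\Delta+q$ with $q=\lambda c_j^{-1}$, the Sylvester--Uhlmann uniqueness theorem (or the boundary control method, for $n\ge2$) gives $\lambda/c_1=\lambda/c_2$, so $c_1=c_2=:c$. I take this as the classical input referred to in the introduction.

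\emph{Step 2: reduction to the elliptic problem.} Fix $h\in\cA_{\varphi}$ with $h>\varphi|_{\p\Omega}$ and $h$ smooth. By density and continuity of $\Theta$ in $f$ (from the $H^1$/$C^{1,\alpha}$ stability of the obstacle problem), strict data suffice for Theorem \ref{thm-main-1}. Choose $f\in\cApar_{\varphi,M_0}$ that equals $M_0$ near $t=0$, interpolates smoothly to $h$, and equals $h$ for $t\ge1$, keeping $f>\varphi|_{\p\Omega}$. This is possible because $M_0$ and $h$ both exceed $\varphi$ on $\p\Omega$ and the set of admissible values is convex.

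Let $u_j$ be the parabolic solution and $U_j$ the elliptic solution of \eqref{def-obstacle-intro} for the operator $c\Delta$. Since $c>0$, the elliptic obstacle problem for $c\Delta$ is the same as the one for $\Delta$, so $U_j=u_{\varphi_j}^h$. The key claim is that $u_j(t)\to U_j$ in $W^{2,p}(\Omega)$, or at least in a norm controlling $\p_\nu$ on $\p\Omega$, as $t\to\infty$.

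To prove convergence, I would first obtain $L^2$ exponential decay by testing. Put $w=u_j-U_j$ for $t\ge1$, so $w=0$ on $\p\Omega$. Test \eqref{eq:par_obstacle_weak} with $v=U_j$, and use the elliptic variational inequality (with weight $1/c$, i.e.\ test $-\Delta U_j\ge0$ against $u_j-U_j$ using the complementarity relation). This gives
\[
\tfrac{d}{dt}\int c^{-1}|w|^2+2\int|\nabla w|^2\le0,
\]
and hence exponential decay by Poincaré. To upgrade to convergence of normal derivatives, note that near $\p\Omega$ both $u_j$ and $U_j$ lie strictly above $\varphi_j$, uniformly in $t$, because $h>\varphi_j$ on $\p\Omega$ and by continuity. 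So in a fixed boundary collar both solve linear equations, and local parabolic regularity turns $L^2$ decay into $C^1$ decay up to $\p\Omega$. This uniform collar is the step I expect to be the main obstacle. It needs a uniform-in-$t$ modulus of continuity for $u_j$ near the boundary, from $W^{1,2}_p$ bounds on unit time intervals \cite{Friedman82}, combined with the decay.

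Finally, $\Lambdapar_{c,\varphi_1}(f)=\Lambdapar_{c,\varphi_2}(f)$ gives $\Theta_{\varphi_1}(h)=\lim_{t\to\infty}\p_\nu u_1(t)=\Theta_{\varphi_2}(h)$ for all such $h$. Theorem \ref{thm-main-1} then yields $\varphi_1=\varphi_2$.
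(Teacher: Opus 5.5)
Your Step 1 is, in substance, the paper's Lemma \ref{lm:coefficient_id}. You use the maximum principle on all of $[0,\infty)$ where the paper uses the $C^1$ estimate on $[0,T]$. You use a Laplace transform plus Calder\'on-type uniqueness where the paper cites Canuto--Kavian. Both substitutions are fine, with two small corrections:
\begin{itemize}
\item The transformed operator is $\Delta-s/c_j$, not $\Delta+\lambda/c_j$. The sign slip is harmless.
\item Sylvester--Uhlmann covers only $n\ge 3$; for $n=2$ you need Bukhgeim's theorem.
\end{itemize}
The density step in Step 2 is unnecessary. Theorem \ref{thm-main-1} is proved through Theorem \ref{thm:ellptic_obstacle}, which uses only strictly admissible data. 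Moreover, the stability you invoke for data touching the obstacle on $\p\Omega$ is not covered by \eqref{eq:ell_c1alpha_estimate}.

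Your Step 2 takes a genuinely different route from the paper. The paper, in Lemmas \ref{lm:H_1_asymptotic} and \ref{lm:N_trace_asymptotic}, argues as follows. It shows that $t\mapsto\int_\Omega|\nabla u(t)|^2$ is nonincreasing for $t>t_0$, hence $\p_t u\in L^2((t_0,\infty);L^2(\Omega))$. It then uses minimality of the elliptic solution to get only \emph{essential} convergence $u(t)\to U$ in $H^1(\Omega)$. Finally it gets convergence of $\p_\nu u(t)$ in $H^{-1/2}(\p\Omega)$ from $\Delta u=c^{-1}\p_t u$ in a collar.

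You instead couple the parabolic inequality (in the $c^{-1}$-weighted form of Lemma \ref{lm:par_weak_form}) with the elliptic one:
$\langle c^{-1}\p_t w,w\rangle\le -a(u,w)+a(U,w)=-a(w,w)$.
This is correct and gives more than the paper: exponential $L^2$ decay for every $t$. Since $w$ solves an autonomous linear equation with zero lateral data in a collar, boundary estimates with $t$-independent constants then give exponential $C^1$ convergence of $\p_\nu u(t)$. No exceptional null set of times is needed.

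What is missing is the step you flag yourself: a collar in which the obstacle is inactive uniformly in $t$. Your plan for it is not carried out. \cite{Friedman82} gives $W^{1,2}_p$ bounds on $[0,T]\times\Omega$ that depend on $T$, and bounds on $[k,k+1]$ uniform in $k$ would need further estimates that you do not supply.

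Both your argument and the paper's need this collar, and the paper obtains it by comparison rather than regularity. Let $\psi$ be the harmonic extension of $\varphi_1|_{\p\Omega}=\varphi_2|_{\p\Omega}$.
\begin{itemize}
\item Because $\Delta\psi=0$, the function $u_j-\psi$ is a supersolution of $\p_t-c\Delta$.
\item Its initial value satisfies $M_0-\psi\ge M_0-\max_{\p\Omega}\varphi_j>0$.
\item Its lateral data $f-\varphi_j|_{\p\Omega}$ is bounded below by a positive constant uniformly in $t$, because $f=h$ for $t\ge 1$.
\item The parabolic maximum principle therefore gives $u_j\ge\psi+\delta$ on $[0,\infty)\times\Omega$ for some $\delta>0$.
\item Since $\varphi_j-\psi$ is continuous and vanishes on $\p\Omega$, we have $\varphi_j<\psi+\delta\le u_j$ in a fixed collar for all $t$ (compare the proof of Lemma \ref{lm:additional_regularity}).
\end{itemize}
With this inserted, your argument is complete.
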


\subsection{Literature review}
The classical inverse obstacle problem studies the detection of unknown inclusions or cavities from their scattering signatures under acoustic, electromagnetic or gravitational probing fields \cite{LP67,Isakov-survey,CK18}. A parallel program, initiated by Isakov \cite{Isakov88} and subsequently advanced by Ikehata \cite{Ikehata98}, shifted the focus from pure shape reconstruction to coupling the unknown inclusion with the conductivity of the surrounding medium, seeking to recover the internal structure from the Dirichlet-to-Neumann map. This approach, fundamentally linked to Calderón's problem, was further propelled by the development of complex geometric optics \cite{SU87,NUW05,IINSU07,UW07,UWW09}. 
In the case of single measurement, establishing global uniqueness inherently requires restrictive geometric assumptions on the obstacles, such as convex polyhedra \cite{FI89} or finite unions of disks \cite{IP90}. Later in \cite{AI96} a key observation linked the uniqueness in the obstacle determination to the local regularity of the obstacle, framing it as a free boundary within a transmission problem. This perspective was rigorously expanded by Athanasopoulos, Caffarelli and Salsa \cite{ACS01} to accommodate a broader class of obstacles. 
Recently in \cite{SS21,SS25}, methods from free boundary problems were employed by Salo and Shahgholian to study problems arising from inverse scattering theory.

Free boundary problems are of broad interest due to their diverse applications in physics and engineering, such as fluid filtration through porous media \cite{DL76}, phase transitions \cite{Stefan1889}, optimal stopping problems for stochastic processes \cite{HP89}. 
The vast majority of the literature has focused on the regularity of the solution \cite{Shahgholian03,ACM-parabolic} and the free boundary \cite{Caffarelli78,FRS20,FS19}, see \cite{CS05,PetrosyanShahgholianUraltseva2012,Figalli-ICM2018,Ros2018-survey,FR22} and the references therein. 
In contrast, the study of inverse problems for free boundary models has been remarkably limited.
Inverse problems for the Signorini (thin) obstacle problem were recently studied in \cite{DLLOZ2025,Z26}.
For the classical (thick) obstacle problem with a known obstacle, it is shown in \cite{HKS26} that the recovery of the scalar coefficient with interior measurements is non-unique in the contact set. To the best of our knowledge, our present work provides the first uniqueness result for the recovery of the unknown obstacle itself in the classical obstacle problem from boundary measurements.
More broadly, inverse problems for the inclusion-type models have been studied extensively for parabolic equations. Multiple reconstruction methods have been developed, including the exterior approach \cite{BD-heat}, the enclosure method \cite{IK-heat,IK-heat-reconstruction}, 
the probe method \cite{DKN,IKN}, and sampling method \cite{NW-heat,SNW-heat}, while uniqueness and stability for the inclusion-type problem were established in \cite{EI-parabolic-uniqueness,DV-parabolic-stability}. 
We refer the reader to the survey \cite{Isakov-survey} for a broader overview of inverse obstacle problems. 
In a related work \cite{Kian2020}, the simultaneous determination of coefficients, sources and an obstacle for a parabolic equation from a single measurement is studied.

There is a substantial literature on the inverse coefficient problems for linear parabolic equations. 
For time-independent coefficients we refer to \cite{Isakov1993,CK01}; for time-dependent lower-order terms and partial-data problems, see \cite{CK2018,FKU2024}; and the case of nonautonomous heat equations with time-dependent coefficients is considered in \cite{Feizmohammadi2024}.
From a methodological point of view, a central approach is to reduce the parabolic inverse problem to spectral or wave-type data \cite{Belishev-heat,KKLM2004,KKL}. 
When the potential is time-independent, this reduction can be combined with the uniqueness of inverse spectral problems \cite{NSU1988,Isozaki91,CS13,KMO19}. In geometric or anisotropic settings, the boundary control method provides a powerful and general framework \cite{Belishev1987,BK1992,KKL}. 


\medskip
\noindent {\bf Outline of the paper.}
The paper is organized as follows. 
In Section \ref{sec:rounge_Dirichlet} we recall some basic facts on the classical obstacle problem and study a Dirichlet problem on the rough non-contact set. In particular, we prove that the one-sided linearization of the obstacle problem can be characterized by the rough Dirichlet problem on the corresponding non-contact set. Section \ref{sec:inverse_elliptic} is devoted to solving the inverse elliptic obstacle problem stated in Theorem \ref{thm-main-1}: we show that the linearized Cauchy data determine the non-contact set up to set of Sobolev $2$-capacity zero, and then integrate the resulting one-parameter family in the boundary amplitude to recover the obstacle. In Section \ref{sec:inverse_parabolic} we establish the regularity and asymptotic convergence needed to reduce the inverse parabolic obstacle problem to the elliptic inverse problem, and then conclude Theorem \ref{thm:main} from the reduction.


\section{A Rough Dirichlet Problem for the Elliptic Obstacle}
\label{sec:rounge_Dirichlet}

\subsection{Preliminaries}
We start by recalling basic definitions and facts in the classical obstacle problem.
Let $\varphi\in C^{2,1}(\overline{\Omega})$ with the obstacle satisfying $\Delta \varphi < 0$. Let $u^g$ solve \eqref{def-obstacle-intro} associated with the boundary condition $g\in C^{\infty}(\p \Omega)$ satisfying $g > \varphi|_{\p\Omega}$. In view of the global regularity for the Dirichlet obstacle problem (see e.g. \cite{Jensen1980,Friedman82}), there exists a constant $C=C(\alpha,\Omega,\varphi)$ such that
  \begin{equation}
    \label{eq:ell_c1alpha_estimate}
    \|u^g\|_{C^{1,\alpha}(\overline\Omega)}
    \leq C\bigl(1+\|g\|_{C^{2,\alpha}(\p\Omega)}\bigr).
  \end{equation}

We define the non-contact set $H_g$ and contact set $K_g$ as
\[
H_g:=\{x\in\Omega:u^g(x)>\varphi(x)\},
\qquad
K_g:=\{x\in\Omega:u^g(x)=\varphi(x)\}.
\]

\begin{lemma}
  \label{lem:ell_no_interior_components}
  Let $A$ be any connected component of $H_g$. Then $\overline{A}\cap \p \Omega\neq \emptyset$.
\end{lemma}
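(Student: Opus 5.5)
We argue by contradiction: if a component $A$ of $H_g$ stays away from the outer boundary, the function $w:=u^g-\varphi$ is forced to vanish on $A$. So suppose $\overline{A}\cap\p\Omega=\emptyset$, so that $\overline A\subset\Omega$ is compact.

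Since $u^g\in C^{1,\alpha}(\overline\Omega)$ by \eqref{eq:ell_c1alpha_estimate}, $H_g$ is open, and hence $A$ is an open connected set. We check that $w=0$ on $\p A$. Take $x\in\p A$. Then $x\in\Omega$. If $x\in H_g$, a small ball around $x$ lies in $H_g$; this ball is connected and meets $A$, so it lies in $A$, which contradicts $x\in\p A$. Hence $x\in K_g$, i.e. $w(x)=0$.

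On $A$ we have $\Delta u^g=0$ by \eqref{def-obstacle-intro}, so $u^g$ is harmonic and smooth there. Therefore
\[
\Delta w=-\Delta\varphi>0 \quad\text{in } A,
\]
and $w$ is strictly subharmonic in $A$ and continuous on $\overline A$. The weak maximum principle on the bounded open set $A$ gives
\[
\sup_A w\le \max_{\p A} w=0.
\]
But $w>0$ in $A$ by the definition of $H_g$, and $A\neq\emptyset$. This is a contradiction, so $\overline A\cap\p\Omega\neq\emptyset$.

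The only points needing care are that $A$ is open, that $\p A\subset K_g$, and that $w\in C(\overline A)$. All three follow from the continuity of $u^g$ up to $\overline\Omega$, so there is no real obstacle.

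The same argument works under the weaker hypothesis $\Delta\varphi\le0$ in place of strict superharmonicity, since the weak maximum principle only needs $w$ subharmonic in $A$.
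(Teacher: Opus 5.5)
Your proof is correct and follows the same route as the paper: assuming $\overline A\subset\Omega$, you observe that $w=u^g-\varphi$ is subharmonic in $A$ with $w=0$ on $\p A$, and the maximum principle then contradicts $w>0$ in $A$. The differences are minor: you use the weak maximum principle where the paper invokes the strong one, and you verify $\p A\subset K_g$ explicitly, whereas the paper asserts it without comment.
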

\begin{proof}
To get a contradiction, we assume that $\overline{A}\cap \p \Omega = \emptyset$.
Since $A\subset H_g$ and $\p A\cap \p \Omega = \emptyset$, then $u^g$ is harmonic in $A$ and $u^g=\varphi$ on $\p A$.
Hence $w:=u^g-\varphi$ satisfies
\[
\Delta w=-\Delta \varphi>0\quad\text{in }A,
\qquad
w=0\quad\text{on }\p A.
\]
Since $\Delta w>0$ in $A$, the function $w$ is a strict subsolution of the Laplace equation. If there were a point in $A$ with $w>0$, then, because $w=0$ on $\p A$, the continuous function $w$ would attain a positive maximum at an interior point of $A$. By the strong maximum principle \cite[Theorem 3.5]{gilbarg1998}, this would force $w$ to be constant in $A$, which is impossible since $\Delta w>0$. Hence $w\leq 0$ in $A$, contradicting $A\subset H_g$.
\end{proof}

\begin{lemma}
\label{lm:free_db_density}
    Let $g\in C^\infty(\p \Omega)$ with $g>\varphi|_{\p\Omega}$. If $x\in \p H_g\setminus \p \Omega$, then there exist constants $r_0, \delta>0$, such that for all $0<r\leq r_0$, there holds
    \begin{equation}
        \frac{\abs{H_g\cap B(x,r)}}{\abs{B(x,r)}} \geq \delta >0.
    \end{equation}
\end{lemma}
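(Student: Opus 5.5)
Set $w:=u^g-\varphi\geq 0$. Since $u^g$ solves \eqref{def-obstacle-intro}, we have $\Delta w=-\Delta\varphi$ a.e.\ in $H_g=\{w>0\}$, while $\nabla w=0$ and $D^2w=0$ a.e.\ on $\{w=0\}$. Hence
\[
\Delta w = -\Delta\varphi\,\chi_{\{w>0\}} \quad\text{a.e. in }\Omega .
\]
Choose $r_0$ so small that $B(x,2r_0)\Subset\Omega$, and set $\lambda:=\min_{\overline{B(x,2r_0)}}(-\Delta\varphi)>0$ and $\Lambda:=\|\Delta\varphi\|_{L^\infty(\Omega)}$. This is the classical obstacle-problem setting with right-hand side bounded between two positive constants. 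The plan is to combine \emph{nondegeneracy} with \emph{optimal quadratic growth}.

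\textbf{Step 1 (nondegeneracy).} For $y\in H_g$ close to $x$ and $\rho$ small, I will show
\[
\sup_{B(y,\rho)} w \geq w(y)+\tfrac{\lambda}{2n}\rho^2 .
\]
The standard argument uses the auxiliary function $h(z)=w(z)-w(y)-\frac{\lambda}{2n}|z-y|^2$ on the open set $H_g\cap B(y,\rho)$. It satisfies $\Delta h\geq 0$ there and $h(y)=0$. On $\partial H_g$ we have $w=0$, so $h<0$ there. The maximum principle then forces $\sup_{\partial B(y,\rho)\cap H_g} h\geq 0$, which is the claim. Now let $y\to x$ with $y\in H_g$; this is possible because $x\in\partial H_g$. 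By continuity of $w$ we obtain $\sup_{B(x,r/2)} w\geq c_1 r^2$ with $c_1=\lambda/(8n)$. This gives a point $y_r\in B(x,r/2)$ with $w(y_r)\geq c_1 r^2$.

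\textbf{Step 2 (quadratic growth).} Since $x\in\partial H_g\setminus\partial\Omega$, we have $w(x)=0$, and $w\geq0$ attains a minimum there, so $\nabla w(x)=0$. Using the interior $C^{1,1}$ regularity of $u^g$ (Frehse, as cited in the introduction) and $\varphi\in C^{2,1}$, we get $|\nabla w(z)|\leq M|z-x|$ on $B(x,r_0)$, with $M$ independent of $r$.

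\textbf{Step 3 (a ball inside $H_g$).} For $z\in B(y_r,\kappa r)$ with $\kappa$ small, the gradient bound from Step 2 gives
\[
w(z)\geq w(y_r)-M(r/2+\kappa r)\kappa r\geq c_1r^2-M\kappa r^2>0
\]
once $\kappa< c_1/(2M)$ and $\kappa\le1/2$. Thus $B(y_r,\kappa r)\subset H_g\cap B(x,r)$, and consequently
\[
\frac{|H_g\cap B(x,r)|}{|B(x,r)|}\geq \kappa^n=:\delta .
\]

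The main obstacle is justifying the maximum principle in Step 1 with only $W^{2,p}$ or $C^{1,1}$ regularity, together with the strict inequality $h<0$ on $\partial H_g$. I will handle this by noting that $w$ is smooth harmonic-plus-$\varphi$ inside $H_g$: since $u^g$ is harmonic there, $\Delta h\geq 0$ holds classically in $H_g$. The boundary comparison is then fine because $h$ is continuous on the closure. The uniformity of $M$ in Step 2 is simply the local $C^{1,1}$ bound on the fixed compact ball $\overline{B(x,2r_0)}$.
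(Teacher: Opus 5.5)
Your proof is correct and is essentially the paper's route. The paper sets $v=u^g-\varphi$, bounds $-\Delta\varphi$ below by a positive constant away from a boundary collar contained in $H_g$, and cites Friedman's positive-density theorem; that theorem's standard proof is exactly the nondegeneracy plus $C^{1,1}$ quadratic-growth argument you wrote out, and your localization to $\overline{B(x,2r_0)}$ simply replaces the collar device.
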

\begin{proof}
    Since $g > \varphi|_{\p\Omega}$ and $u^g\in C^{1}(\Omega)$, there exists a smooth open collar neighborhood $U$ of $\p\Omega$ such that $U\subset H_g$ and $x\not\in \overline{U}$. Notice that $\Delta \varphi <0$ in $\Omega$, then $\Delta \varphi$ is strictly negative on the compact set $\Omega\setminus U$, that is, we can find a constant $\lambda<0$ such that $\Delta \varphi \leq \lambda <0$ in $\Omega\setminus U$. Consider the function $v: = u^g - \varphi$, then $v$ satisfies
    \begin{equation}
            \Delta v -\Delta \varphi \geq 0,\quad v\geq 0,\quad (\Delta v -\Delta \varphi) v = 0\quad\text{ in }\Omega\setminus U.
    \end{equation}
    Our desired result follows immediately by applying \cite[Chapter 2, Theorem 3.4]{Friedman82} to $v$.
\end{proof}

\begin{remark}
    As noted in \cite{C98,Weiss99,Blank01}, Caffarelli's dichotomy theorem is valid for $C^{2,\alpha}$ strictly superharmonic obstacles, that is, any point of the free boundary $\p H_g\setminus \p \Omega$ is either regular point or singular point, see e.g. detailed exposition in \cite[Chapter 5.6]{FR22}. 
    If $x\in \p H_g\setminus \p \Omega$ is a regular point, the free boundary is $C^{1,\alpha}$-smooth in a neighborhood of $x$. Consequently, we have 
    \begin{equation}
        \lim_{r\to 0}\frac{\abs{H_g\cap B(x,r)}}{\abs{B(x,r)}} =\frac{1}{2}.
    \end{equation}
    Conversely, if $x$ is a singular point of the free boundary, we have 
    \begin{equation}
        \lim_{r\to 0}\frac{\abs{H_g\cap B(x,r)}}{\abs{B(x,r)}} =1.
    \end{equation}
    Note that here $H_g$ denotes the non-contact set.
    Therefore, the lower bound $\delta$ in Lemma \ref{lm:free_db_density} can be improved to essentially the sharp constant $1/2$.
\end{remark}

\smallskip
Next, let us recall the following comparison principle.
\begin{proposition}[Comparison principle]
  \label{prop:ell_comparison}
  Let $g_1,g_2\in C^\infty(\p\Omega)$ satisfy
  \[
  g_1>g_2>\varphi\quad\text{on }\p\Omega.
  \]
  Then
  \[
  u^{g_1}\geq u^{g_2}\quad\text{in }\Omega.
  \]
  Consequently,
  \[
  K_{g_1}\subset K_{g_2},
  \qquad
  H_{g_2}\subset H_{g_1}.
  \]
\end{proposition}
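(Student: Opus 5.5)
The plan is to use the variational characterization together with a lattice argument, i.e.\ Stampacchia's truncation trick. Write $u_i:=u^{g_i}$ for $i=1,2$; both lie in $H^1(\Omega)\cap C(\overline\Omega)$. We want $u_1\geq u_2$, so we test the two variational inequalities \eqref{def-variational-intro} with suitable competitors. Set $w:=(u_2-u_1)^+$. Since $g_1>g_2$ on $\p\Omega$, we have $w|_{\p\Omega}=0$, and hence $w\in H^1_0(\Omega)$.

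\textbf{Admissibility of the competitors.} Take $v_1:=\max\{u_1,u_2\}=u_1+w$ as the competitor for the problem with datum $g_1$. It lies in $H^1(\Omega)$, it is $\geq u_1\geq\varphi$, and its trace is $\max\{g_1,g_2\}=g_1$, so $v_1\in\cK_{\varphi,g_1}$. Take $v_2:=\min\{u_1,u_2\}=u_2-w$ as the competitor for $g_2$. It is $\geq\varphi$ because both $u_1,u_2\geq\varphi$, and its trace is $\min\{g_1,g_2\}=g_2$, so $v_2\in\cK_{\varphi,g_2}$.

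\textbf{Combining the inequalities.} Inserting the competitors into \eqref{def-variational-intro} gives
\[
\int_\Omega\nabla u_1\cdot\nabla w\,dx\geq 0,\qquad -\int_\Omega\nabla u_2\cdot\nabla w\,dx\geq 0 .
\]
Adding the two yields
\[
\int_\Omega\nabla(u_2-u_1)\cdot\nabla w\,dx\leq 0.
\]
Since $\nabla w=\nabla(u_2-u_1)\chi_{\{u_2>u_1\}}$ a.e., the left-hand side equals $\int_\Omega|\nabla w|^2\,dx$. Hence $\nabla w=0$, and because $w\in H^1_0(\Omega)$, Poincaré's inequality gives $w=0$. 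Therefore $u_1\geq u_2$ a.e., and by continuity everywhere in $\Omega$.

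\textbf{Consequences for the sets.} If $x\in H_{g_2}$, then $u_1(x)\geq u_2(x)>\varphi(x)$, so $x\in H_{g_1}$. This gives $H_{g_2}\subset H_{g_1}$. Taking complements within $\Omega$, and using $u\geq\varphi$ everywhere so that $K_g=\Omega\setminus H_g$, we get $K_{g_1}\subset K_{g_2}$.

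\textbf{Main obstacle.} The only delicate point is confirming that the truncations are admissible: $H^1$ is closed under $\max$ and $\min$, and the trace commutes with $\max$ and $\min$. Both facts are standard, and the continuity of $u_i$ up to the boundary from \eqref{eq:ell_c1alpha_estimate} makes the trace identities immediate. Strictness $g_1>g_2$ is not actually needed; $g_1\geq g_2$ suffices.
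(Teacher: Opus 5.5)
Your proof is correct, and it takes a different route from the paper.

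The paper argues by contradiction on the open set $S=\{u^{g_2}>u^{g_1}\}$. There $u^{g_2}>u^{g_1}\ge\varphi$, so $u^{g_2}$ is harmonic by the complementarity system \eqref{def-obstacle-intro}, while $\Delta u^{g_1}\le 0$ a.e. Hence $u^{g_1}-u^{g_2}$ is superharmonic in $S$ and vanishes on $\p S$; strictness $g_1>g_2$ is used to get $S\Subset\Omega$. The minimum principle then gives the contradiction. That argument works with the strong form of the problem. It therefore implicitly relies on the $C^{1,1}_{\mathrm{loc}}$ (or $W^{2,p}$) regularity of the solutions, which is needed to apply a minimum principle to a function known only to satisfy $\Delta(u^{g_1}-u^{g_2})\le 0$ almost everywhere on a possibly irregular open set.

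Your Stampacchia-type argument uses only the variational inequality \eqref{def-variational-intro} and the stability of the admissible sets under $\max$/$\min$ with the correct traces. So it needs no regularity beyond $H^1$. Continuity enters only to upgrade the a.e.\ inequality to a pointwise one, and it is not even needed for $w\in H^1_0(\Omega)$, since the trace of $(u_2-u_1)^+$ is $(g_2-g_1)^+=0$. As you note, it also gives the result under the weaker hypothesis $g_1\ge g_2$.

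The paper's version has the advantage of staying in the same maximum-principle language as the strict comparison Lemma \ref{lem:ell_strict_comparison} built on it. Yours is the more self-contained and more general argument.
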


\begin{proof}
Let $u_1:=u^{g_1}$ and $u_2:=u^{g_2}$, and suppose that
\[
S:=\{x\in\Omega:u_2(x)>u_1(x)\}
\]
is nonempty. Since $u_1\geq \varphi$, we have $u_2>\varphi$ on $S$, so $u_2$ is harmonic there. Therefore
\[
\Delta(u_1-u_2)=\Delta u_1\leq 0\quad\text{in }S,
\]
so $u_1-u_2$ is superharmonic in $S$. Since $g_1>g_2$ on $\p\Omega$, continuity of $u_1,u_2$ implies $S\Subset\Omega$, and on $\p S$ we have $u_1-u_2=0$. By the minimum principle for superharmonic functions,
\[
u_1-u_2\geq 0\quad\text{in }S,
\]
contradicting the definition of $S$. Hence $S=\varnothing$, so $u_1\geq u_2$ in $\Omega$. The set inclusions are immediate.
\end{proof}

\begin{lemma}[Strict comparison on the non-contact set]
  \label{lem:ell_strict_comparison}
  Under the hypotheses of Proposition \ref{prop:ell_comparison}, one has
  \[
  u^{g_1}>u^{g_2}\qquad\text{in }H_{g_1}.
  \]
\end{lemma}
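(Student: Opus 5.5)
Set $w:=u^{g_1}-u^{g_2}$. By Proposition \ref{prop:ell_comparison}, $w\geq 0$ in $\Omega$, and $w$ is continuous on $\overline\Omega$ (indeed $C^{1,\alpha}$ by \eqref{eq:ell_c1alpha_estimate}). The plan is to show that $w$ is superharmonic on each connected component $A$ of $H_{g_1}$, and then combine the strong minimum principle with Lemma \ref{lem:ell_no_interior_components} to rule out interior zeros.

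\textbf{Step 1 ($w$ is superharmonic on $H_{g_1}$).} On $H_{g_1}$ we have $\Delta u^{g_1}=0$, while $\Delta u^{g_2}\leq 0$ a.e. in $\Omega$. Hence
\[
\Delta w=-\Delta u^{g_2}\geq 0\quad\text{a.e. in }H_{g_1}.
\]
This has the wrong sign for a minimum principle, so I will split instead according to $H_{g_2}\subset H_{g_1}$. On $H_{g_2}$, both functions are harmonic, so $w$ is harmonic there. On $H_{g_1}\setminus H_{g_2}=H_{g_1}\cap K_{g_2}$ we have $u^{g_2}=\varphi<u^{g_1}$, so $w=u^{g_1}-\varphi>0$ there. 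Thus any zero of $w$ in $H_{g_1}$ lies in $H_{g_2}$.

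\textbf{Step 2 (propagating positivity).} Suppose $w(x_0)=0$ for some $x_0\in H_{g_1}$. Then $x_0\in H_{g_2}$. Let $A$ be the connected component of $H_{g_2}$ containing $x_0$. Since $w\ge0$ is harmonic in $A$ and attains its minimum $0$ at an interior point, the strong maximum principle \cite[Theorem 3.5]{gilbarg1998} gives $w\equiv 0$ on $A$, and by continuity on $\overline A$. By Lemma \ref{lem:ell_no_interior_components}, applied with $g=g_2$, there is a point $y\in\overline A\cap\p\Omega$. But $w(y)=g_1(y)-g_2(y)>0$, a contradiction. Hence $w>0$ on $H_{g_1}$.

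\textbf{Main obstacle.} The delicate step is Step 1, in recognising that $w$ is not superharmonic on all of $H_{g_1}$; a direct minimum principle there fails. The fix is the case split: on $K_{g_2}\cap H_{g_1}$ positivity is immediate, and on $H_{g_2}$ we use harmonicity together with Lemma \ref{lem:ell_no_interior_components}. That lemma applies because $g_2>\varphi$ on $\p\Omega$, and it is what guarantees that every component of $H_{g_2}$ reaches the outer boundary, where strict positivity of $g_1-g_2$ is available. One should also check that $\overline A$ meets $\p\Omega$ at a point where $w$ is defined continuously, which follows from the global $C^{1,\alpha}$ regularity \eqref{eq:ell_c1alpha_estimate}.
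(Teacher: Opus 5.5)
Your proposal is correct and is essentially the paper's proof. Points of $H_{g_1}\cap K_{g_2}$ are handled directly because $u^{g_2}=\varphi<u^{g_1}$ there, and on a component of $H_{g_2}$ the harmonic function $w\ge 0$ cannot vanish at an interior point, by the strong minimum principle together with Lemma \ref{lem:ell_no_interior_components} and $g_1>g_2$ on $\p\Omega$. (Using continuity of $w$ on $\overline A$ to reach a boundary point is slightly cleaner than the paper's ``on $C\cap\p\Omega$''. Only the heading of Step~1 is a misnomer: $w$ is in fact subharmonic, not superharmonic, on $H_{g_1}$.)
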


\begin{proof}
Let $u_1:=u^{g_1}$ and $u_2:=u^{g_2}$. Fix $x\in H_{g_1}$. If $x\notin H_{g_2}$, then $u_2(x)=\varphi(x)<u_1(x)$ and there is nothing to prove.

If $x\in H_{g_2}$, let $C$ be the connected component of $H_{g_2}$ containing $x$. By Proposition \ref{prop:ell_comparison}, we have $H_{g_2}\subset H_{g_1}$, so both $u_1$ and $u_2$ are harmonic in $C$. Hence $w:=u_1-u_2$ is harmonic in $C$, and by Proposition \ref{prop:ell_comparison} it satisfies $w\ge 0$ there. By Lemma \ref{lem:ell_no_interior_components}, the component $C$ meets $\p\Omega$, and on $C\cap\p\Omega$ we have
\[
w=g_1-g_2>0.
\]
If $w(x)=0$ at some interior point of $C$, the strong minimum principle would force $w\equiv 0$ in $C$, contradicting the strict boundary values on $C\cap\p\Omega$. Therefore $w>0$ in $C$, and in particular $u_1(x)>u_2(x)$.
\end{proof}

\subsection{Continuity and left linearization}
Fix $g,h\in C^\infty(\p\Omega)$ with $g>\varphi|_{\p\Omega}$ and $h>0$ on $\p\Omega$. For $\varepsilon<0$ such that $g+\varepsilon h > \varphi|_{\p\Omega}$, we write
\begin{equation}
u_\varepsilon:=u^{g+\varepsilon h},
\qquad
u_0:=u^g.
\end{equation}
By Proposition \ref{prop:ell_comparison}, if $\varepsilon_2<\varepsilon_1$ then $u_{\varepsilon_2}\leq u_{\varepsilon_1}$.

\begin{proposition}[Continuity with respect to the boundary data]
  \label{prop:ell_continuity}
  As $\varepsilon\to 0$, one has
  \[
  u_\varepsilon\to u_0\quad\text{in }C^1(\overline\Omega).
  \]
\end{proposition}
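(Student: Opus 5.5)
Since $u_\varepsilon$ depends monotonically on $\varepsilon$ and the boundary data stay uniformly bounded in $C^{2,\alpha}$, I would argue by compactness and identify the limit as $u_0$ via the variational inequality.

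\emph{Step 1: uniform bounds.} For $\varepsilon\in[\varepsilon_0,0)$ with $g+\varepsilon_0 h>\varphi|_{\p\Omega}$, the data $g+\varepsilon h$ satisfy $\|g+\varepsilon h\|_{C^{2,\alpha}(\p\Omega)}\le \|g\|_{C^{2,\alpha}}+|\varepsilon_0|\|h\|_{C^{2,\alpha}}$. The estimate \eqref{eq:ell_c1alpha_estimate} then gives a bound on $\|u_\varepsilon\|_{C^{1,\alpha}(\overline\Omega)}$ that does not depend on $\varepsilon$. By Arzel\`a--Ascoli, the embedding $C^{1,\alpha}(\overline\Omega)\hookrightarrow C^1(\overline\Omega)$ is compact, so every sequence $\varepsilon_k\to0^-$ has a subsequence along which $u_{\varepsilon_k}\to w$ in $C^1(\overline\Omega)$, for some $w\in C^{1,\alpha}(\overline\Omega)$. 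Monotonicity (Proposition \ref{prop:ell_comparison}) even gives pointwise monotone convergence of the full family to $w$, so $w$ does not depend on the subsequence. The subsequence argument alone already suffices once we show $w=u_0$.

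\emph{Step 2: identify the limit.} We have $w\ge\varphi$ and $w|_{\p\Omega}=g$, so $w\in\cK_{\varphi,g}$. To show $w$ satisfies \eqref{def-variational-intro}, fix $v\in\cK_{\varphi,g}$ and set $v_\varepsilon:=v+(u_\varepsilon-u_0)$. Then $v_\varepsilon|_{\p\Omega}=g+\varepsilon h$. The constraint $v_\varepsilon\ge\varphi$ may fail, because $u_\varepsilon-u_0\le0$.

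To fix this, use instead $\tilde v_\varepsilon:=\max(v_\varepsilon,\varphi)$. This function has the correct trace, since on $\p\Omega$ we have $g+\varepsilon h>\varphi$. It lies in $H^1$, and $\|\tilde v_\varepsilon-v\|_{H^1}\le\|v_\varepsilon-v\|_{H^1}$ plus a term that tends to zero. Indeed, $|\tilde v_\varepsilon - v|\le |u_\varepsilon-u_0|$ pointwise, and the gradient of $\tilde v_\varepsilon-v$ is $\nabla(u_\varepsilon-u_0)$ on $\{v_\varepsilon\ge\varphi\}$. On the complementary set $\{v<\varphi+ (u_0-u_\varepsilon)\}$ it is $\nabla(\varphi-v)$, and this set shrinks to $\{v=\varphi\}$, where $\nabla(\varphi - v)=0$ a.e. By dominated convergence, $\tilde v_\varepsilon\to v$ in $H^1$.

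Plug $\tilde v_\varepsilon$ into the variational inequality for $u_\varepsilon$ and pass to the limit, using $\nabla u_\varepsilon\to\nabla w$ uniformly. This gives $\int\nabla w\cdot\nabla(v-w)\ge0$ for all $v\in\cK_{\varphi,g}$. The minimizer is unique \cite{LS67}, so $w=u_0$.

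\emph{Alternative Step 2.} One could avoid test functions by using the strong formulation \eqref{def-obstacle-intro-min}. From the uniform $C^{1,\alpha}$ bounds together with interior $C^{1,1}$ bounds, $u_\varepsilon\to w$ weakly in $W^{2,p}_{loc}$. Then $\Delta w\le0$, $w\ge\varphi$, and $w$ is harmonic on the open set $\{w>\varphi\}$, since on compact subsets of that set $u_\varepsilon>\varphi$ for small $|\varepsilon|$. These properties characterize the obstacle solution with boundary value $g$, which is the solution of \eqref{def-variational-intro}. I regard the test-function argument of Step 2 as the main delicate point, specifically the $H^1$ convergence of the truncations. The harmonicity-based alternative is the fallback if that argument becomes cumbersome.
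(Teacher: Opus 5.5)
Your Step 1 is fine and matches the compactness part of the paper's proof. The main version of Step 2, however, is circular. With $v_\varepsilon=v+(u_\varepsilon-u_0)$ you have $\|v_\varepsilon-v\|_{H^1(\Omega)}=\|u_\varepsilon-u_0\|_{H^1(\Omega)}$. Likewise, the exceptional set $\{v<\varphi+(u_0-u_\varepsilon)\}$ shrinks to $\{v=\varphi\}$ only if $u_\varepsilon\to u_0$. Along your subsequence you only know $u_{\varepsilon_k}\to w$, so $\|v_{\varepsilon_k}-v\|_{H^1}\to\|w-u_0\|_{H^1}$. The claim $\tilde v_\varepsilon\to v$ in $H^1$ therefore presupposes $w=u_0$, which is exactly what Step 2 is supposed to prove. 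The correction added to $v$ must be something known to vanish independently of the unknown limit.

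This is what the paper does. With $G$ the harmonic extension of $h$, it tests with $\varphi+(v-\varphi+\varepsilon G)^+$. This function lies above $\varphi$, has trace $g+\varepsilon h$ (because $g+\varepsilon h>\varphi$ on $\p\Omega$), and tends to $v$ in $H^1$ since $\varepsilon G\to 0$.

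Your own test function can also be rescued by computing its true limit. Since $u_{\varepsilon_k}\to w$ in $C^1(\overline\Omega)$ and $z\mapsto z^+$ is continuous on $H^1(\Omega)$, you get $\tilde v_{\varepsilon_k}\to\max(v+w-u_0,\varphi)$ in $H^1$. The limit inequality is then $\int_\Omega\nabla w\cdot\nabla\bigl(\max(v+w-u_0,\varphi)-w\bigr)\,dx\ge 0$. Given any $z\in\mathcal K_{\varphi,g}$, set $v:=z+u_0-w$. This $v$ lies in $\mathcal K_{\varphi,g}$ because $w\le u_0$ (by monotonicity) and $w=u_0$ on $\p\Omega$, and it gives $\max(v+w-u_0,\varphi)=z$. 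Hence $w$ satisfies \eqref{def-variational-intro}, and uniqueness gives $w=u_0$.

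Your alternative Step 2 is correct as it stands, and it is a genuinely different way to identify the limit. Uniform convergence alone is enough: $\Delta u_{\varepsilon_k}\le 0$, and harmonicity of $u_{\varepsilon_k}$ on compact subsets of $\{w>\varphi\}$, both pass to the limit in the sense of distributions, so no $W^{2,p}_{\mathrm{loc}}$ bounds are needed.

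The uniqueness you invoke is the comparison argument of Proposition \ref{prop:ell_comparison}. On the open set $\{u_0>w\}$ you have $u_0>\varphi$, so $u_0$ is harmonic there while $w$ is superharmonic. Also $w-u_0$ vanishes on the boundary of that set, including its part on $\p\Omega$. The minimum principle then forces the set to be empty, and together with $w\le u_0$ this gives $w=u_0$.

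Compared with the paper, this route replaces the construction of admissible $H^1$ competitors with the maximum principle applied to the complementarity form \eqref{def-obstacle-intro}. The paper's argument uses only the variational inequality and uniqueness of the minimizer. You should either make the alternative your main argument or repair the first one as above.
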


\begin{proof}
Set $R_\varepsilon:=u_\varepsilon-u_0$. By \eqref{eq:ell_c1alpha_estimate}, the family $\{R_\varepsilon\}$ is bounded in $C^{1,\alpha}(\overline\Omega)$ for each fixed $\alpha\in(0,1)$. After passing to a subsequence, we may assume
\[
R_{\varepsilon_j}\to R_0\quad\text{in }C^1(\overline\Omega)
\]
as $\varepsilon_j\to 0$. Write $\widetilde u:=u_0+R_0$. Then $u_{\varepsilon_j}\to \widetilde u$ uniformly on $\overline\Omega$ and strongly in $H^1(\Omega)$, so $\widetilde u\geq \varphi$ and $\widetilde u|_{\p\Omega}=g$.

Let $G$ be the harmonic extension of $h$ to $\Omega$. Fix $v\in \mathcal K_\varphi^g$, and define
\[
v_{\varepsilon_j}:=\varphi+\bigl(v-\varphi+\varepsilon_j G\bigr)^+.
\]
If $\varepsilon_j>0$, then $v-\varphi+\varepsilon_j G\ge 0$ because $v\ge \varphi$ and $G\ge 0$, so in this case
\[
v_{\varepsilon_j}=v+\varepsilon_j G.
\]
If $\varepsilon_j<0$, the truncation may be active, and admissibility is preserved by the positive part. Since the map $w\mapsto w^+$ is Lipschitz on $H^1(\Omega)$; see \cite[Theorem~9.5]{Brezis}, we have $v_{\varepsilon_j}\to v$ in $H^1(\Omega)$. Moreover, $v_{\varepsilon_j}\geq \varphi$ and
\[
v_{\varepsilon_j}|_{\p\Omega}=g+\varepsilon_j h=g_{\varepsilon_j},
\]
so $v_{\varepsilon_j}\in \mathcal K_\varphi^{g_{\varepsilon_j}}$. Applying \eqref{def-variational-intro} to $u_{\varepsilon_j}$ gives
\[
\int_\Omega \nabla u_{\varepsilon_j}\cdot \nabla(v_{\varepsilon_j}-u_{\varepsilon_j})\,dx\geq 0.
\]
Passing to the limit yields
\[
\int_\Omega \nabla \widetilde u\cdot \nabla(v-\widetilde u)\,dx\geq 0,
\qquad \forall v\in \mathcal K_\varphi^g.
\]
Thus $\widetilde u$ solves the same variational inequality as $u_0$, and uniqueness gives $\widetilde u=u_0$. Since the sequence $\varepsilon_j\to 0$ was arbitrary, it follows that every sequence $\varepsilon_k\to 0$ has a subsequence $\varepsilon_{k_j}\to 0$ such that $R_{\varepsilon_{k_j}}\to 0$ in $C^1(\overline\Omega)$. In particular, $0$ is the only possible $C^1(\overline\Omega)$-cluster point of the family $\{R_\varepsilon\}$ as $\varepsilon\to 0$. Because \eqref{eq:ell_c1alpha_estimate} gives precompactness in $C^1(\overline\Omega)$, the whole family must converge to $0$ in $C^1(\overline\Omega)$.
\end{proof}

We now study the left difference quotient at the base solution $u^g$; the sets $H_\sigma$ and $K_\sigma$ below are the non-contact and contact sets associated with the solution $u_\sigma$.

\begin{lemma}
\label{lm:ell_H1_global_Reps}
    For $\varepsilon<0$, let
\[
R_\varepsilon:=u_\varepsilon-u_0,
\qquad
v_\varepsilon:=\frac{R_\varepsilon}{\varepsilon}.
\]
Then there exists a constant $C$ such that 
\begin{equation}
  \label{eq:ell_H1_global_Reps}
  \|R_\varepsilon\|_{H^1(\Omega)}\leq C|\varepsilon|,
  \qquad
  \|v_\varepsilon\|_{H^1(\Omega)}\leq C.
\end{equation}
\end{lemma}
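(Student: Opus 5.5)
Since $v_\varepsilon=R_\varepsilon/\varepsilon$, the second bound in \eqref{eq:ell_H1_global_Reps} follows from the first, so it suffices to show $\|R_\varepsilon\|_{H^1(\Omega)}\le C|\varepsilon|$. The plan is to test the variational inequalities for $u_\varepsilon$ and $u_0$ against one another, after correcting the boundary mismatch by the harmonic extension $G$ of $h$. Since $h>0$, the maximum principle gives $G>0$ in $\Omega$. Note that $R_\varepsilon$ has boundary trace $\varepsilon h$.

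First, for $\varepsilon<0$ we have $u_\varepsilon\le u_0$ by Proposition~\ref{prop:ell_comparison}. I would use the competitor $u_\varepsilon-\varepsilon G$ for the problem with data $g$. It has trace $g$, and since $-\varepsilon G\ge 0$ it satisfies $u_\varepsilon-\varepsilon G\ge\varphi$, so it is admissible. For the problem with data $g+\varepsilon h$, I would use the competitor $\varphi+(u_0-\varphi+\varepsilon G)^+$. This has the correct trace $g+\varepsilon h$ because $g+\varepsilon h>\varphi$ on $\p\Omega$, and it is admissible. The two inequalities are
\[
\int_\Omega\nabla u_0\cdot\nabla(u_\varepsilon-\varepsilon G-u_0)\ge0,\qquad \int_\Omega\nabla u_\varepsilon\cdot\nabla\bigl(\varphi+(u_0-\varphi+\varepsilon G)^+-u_\varepsilon\bigr)\ge0 .
\]

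The truncation in the second competitor is the main obstacle. I would handle it with the elementary identity
\[
\varphi+(u_0-\varphi+\varepsilon G)^+=u_0+\varepsilon G+(u_0-\varphi+\varepsilon G)^- .
\]
The correction term $w_\varepsilon:=(u_0-\varphi+\varepsilon G)^-$ is supported where $0\le u_0-\varphi\le|\varepsilon|G$. Its gradient equals $-\nabla(u_0-\varphi)-\varepsilon\nabla G$ on that set. Hence $\|w_\varepsilon\|_{L^2}\le C|\varepsilon|$, but a direct bound on $\|\nabla w_\varepsilon\|_{L^2}$ looks too weak, since $\nabla(u_0-\varphi)$ need not be small there.

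To avoid estimating $\nabla w_\varepsilon$, I would integrate by parts instead. Since $u_\varepsilon\in C^{1,1}_{\mathrm{loc}}$ with $\Delta u_\varepsilon\in L^\infty$ (its Laplacian is $0$ or $\Delta\varphi$ a.e.), and $w_\varepsilon$ vanishes near $\p\Omega$ because $u_0>\varphi$ in a collar, we get
\[
\int\nabla u_\varepsilon\cdot\nabla w_\varepsilon=-\int\Delta u_\varepsilon\, w_\varepsilon\le \|\Delta\varphi\|_\infty\|w_\varepsilon\|_{L^1}\le C|\varepsilon| .
\]
Adding the two inequalities then gives
\[
\int|\nabla R_\varepsilon|^2\le -\varepsilon\int\nabla R_\varepsilon\cdot\nabla G + C|\varepsilon|\,\|w_\varepsilon\|_{L^\infty} .
\]

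Since $\|w_\varepsilon\|_{L^\infty}\le|\varepsilon|\|G\|_\infty$, the last term is in fact $O(\varepsilon^2)$. Cauchy--Schwarz and absorption then give $\|\nabla R_\varepsilon\|_{L^2}\le C|\varepsilon|$. Finally, Poincaré's inequality applied to $R_\varepsilon-\varepsilon G\in H^1_0$ yields $\|R_\varepsilon\|_{H^1}\le C|\varepsilon|$, which proves both bounds in \eqref{eq:ell_H1_global_Reps}.
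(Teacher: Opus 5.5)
There is a genuine gap at the step where you declare the truncation term to be $O(\varepsilon^2)$. What you actually proved is
\[
\int_\Omega\nabla u_\varepsilon\cdot\nabla w_\varepsilon\,dx\le\|\Delta\varphi\|_\infty\|w_\varepsilon\|_{L^1}\le C\|w_\varepsilon\|_{L^\infty}\le C|\varepsilon|.
\]
The extra factor $|\varepsilon|$ in your next display has no source. The bound also cannot be improved. On $K_g$ one has $u_0=\varphi$, hence $w_\varepsilon=|\varepsilon|G$ there, and $K_g\subset K_{g+\varepsilon h}$. Therefore
\[
\int_\Omega\nabla u_\varepsilon\cdot\nabla w_\varepsilon\,dx=\int_{K_{g+\varepsilon h}}(-\Delta\varphi)\,w_\varepsilon\,dx\ge|\varepsilon|\int_{K_g}(-\Delta\varphi)\,G\,dx,
\]
which is of exact order $|\varepsilon|$ as soon as the contact set has positive measure. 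The loss is built into adding your two inequalities. After integrating by parts and using $R_\varepsilon=0$ on $K_g$, your first inequality reads simply $|\varepsilon|\int_{K_g}(-\Delta\varphi)G\,dx\ge 0$. So it is slack by exactly this order-$|\varepsilon|$ amount, which you discard. Consequently your argument only yields $\|\nabla R_\varepsilon\|_{L^2}^2\le\varepsilon^2\|\nabla G\|_{L^2}^2+C|\varepsilon|$. That is the H\"older-type bound $\|R_\varepsilon\|_{H^1(\Omega)}\le C|\varepsilon|^{1/2}$, which gives no bound on $v_\varepsilon$.

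The missing ingredient is the one-sided structure of $R_\varepsilon$ combined with a \emph{localized} energy estimate. Since $\Delta u_\sigma=\chi_{K_{g+\sigma h}}\Delta\varphi$ a.e. and $K_g\subset K_{g+\varepsilon h}$, one has $\Delta R_\varepsilon=\chi_{K_{g+\varepsilon h}\setminus K_g}\Delta\varphi\le 0$ and $R_\varepsilon\le 0$. By the minimum principle, $\varepsilon\sup_{\partial\Omega}h\le R_\varepsilon\le 0$.

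Thus $-R_\varepsilon$ is a nonnegative subharmonic function of size $O(|\varepsilon|)$. Testing with $\eta^2R_\varepsilon$, $\eta\in C_c^\infty(\Omega)$, gives the Laplacian term a favorable sign (Caccioppoli), so $\|R_\varepsilon\|_{H^1(\omega)}\le C_\omega|\varepsilon|$ for $\omega\Subset\Omega$. By contrast, any global estimate in your direction $R_\varepsilon-\varepsilon G\ge 0$ leaves $\int_\Omega(-\Delta R_\varepsilon)(R_\varepsilon-\varepsilon G)\,dx\ge 0$ on the wrong side. Bounding that term by $O(\varepsilon^2)$ would require quantitative control of $K_{g+\varepsilon h}\setminus K_g$, which you do not have.

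Near $\partial\Omega$, the paper uses that $R_\varepsilon$ is harmonic on a fixed collar for small $|\varepsilon|$ (Proposition~\ref{prop:ell_continuity}). Its $H^{1/2}$ traces on the collar's boundary are then $O(|\varepsilon|)$, and the harmonic Dirichlet estimate closes the bound.

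If you want to stay in your variational framework, use the competitors $u_0+\eta^2R_\varepsilon$ and $u_\varepsilon-\eta^2R_\varepsilon$ with $0\le\eta\le 1$. They are admissible precisely because $R_\varepsilon\le 0$. Adding the two inequalities gives $\int_\Omega\nabla R_\varepsilon\cdot\nabla(\eta^2R_\varepsilon)\,dx\le 0$, which is the same Caccioppoli estimate. You would still need the sup bound on $R_\varepsilon$ and the separate collar argument.
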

\begin{proof}
    In view of \cite[Section 1.3.2]{PetrosyanShahgholianUraltseva2012}, for $\sigma\in\{\varepsilon,0\}$, we have $\Delta u_\sigma = 0$ in $H_{g+\sigma h}$, and $\Delta u_\sigma=\Delta\varphi$ a.e. on $K_{g+\sigma h}$. Therefore
\[
\Delta u_\sigma
=
\chi_{K_{g+\sigma h}}\Delta\varphi
=
(1-\chi_{H_{g+\sigma h}})\Delta\varphi
\qquad\text{a.e. in }\Omega.
\]
Subtracting the relations for $\sigma=\varepsilon$ and $\sigma=0$ gives
\begin{equation}
  \label{eq:ell_Reps_PDE}
  \begin{cases}
    \Delta R_\varepsilon
    =-\bigl(\chi_{\{u_\varepsilon>\varphi\}}-\chi_{\{u_0>\varphi\}}\bigr)\Delta \varphi
    \quad\text{in }\Omega,\\
    R_\varepsilon|_{\p\Omega}=\varepsilon h.
  \end{cases}
\end{equation}
Because $\varepsilon<0$, Proposition \ref{prop:ell_comparison} gives $u_\varepsilon\leq u_0$, hence
\[
R_\varepsilon\leq 0,
\qquad
\chi_{\{u_\varepsilon>\varphi\}}-\chi_{\{u_0>\varphi\}}\leq 0.
\]
Since $\Delta \varphi<0$, it follows from \eqref{eq:ell_Reps_PDE} that $\Delta R_\varepsilon\leq 0$, so $R_\varepsilon$ is superharmonic. By comparison with the harmonic function with boundary values $\varepsilon h$,
\begin{equation}
  \label{eq:ell_Reps_sup}
  \varepsilon\,\sup_{\p\Omega} h\leq R_\varepsilon\leq 0\quad\text{in }\Omega.
\end{equation}
The Caccioppoli estimate (see e.g. \cite[Lemma 3.27]{HKM}) yields, for every Lipschitz subdomain $\omega\Subset\Omega$,
\begin{equation}
  \label{eq:ell_H1_local_Reps}
  \|R_\varepsilon\|_{H^1(\omega)}\leq C_\omega |\varepsilon|.
\end{equation}
Since $K_g\Subset\Omega$, we may choose a smooth subdomain
\[
\Omega'\subset \Omega\setminus K_g,
\qquad \p\Omega\subset \p\Omega'.
\]
Since $\overline{\Omega'}\cap K_g=\emptyset$, we have $u_0>\varphi$ on $\overline{\Omega'}$. By Proposition \ref{prop:ell_continuity}, for all sufficiently small $|\varepsilon|$ we then also have $u_\varepsilon>\varphi$ on $\overline{\Omega'}$. Hence both $u_\varepsilon$ and $u_0$ solve $\Delta u=0$ on $\Omega'$, so $R_\varepsilon$ is harmonic there. Combining \eqref{eq:ell_H1_local_Reps} on an interior collar with the trace theorem on $\p\Omega'$ gives
\[
\|R_\varepsilon\|_{H^{1/2}(\p\Omega')}=O(|\varepsilon|),
\]
and hence, by the harmonic Dirichlet estimate on $\Omega'$,
\[
\|R_\varepsilon\|_{H^1(\Omega')}\leq C|\varepsilon|.
\]
Together with \eqref{eq:ell_H1_local_Reps}, this yields the global bound
\begin{equation}
  \label{eq:ell_H1_global_Reps}
  \|R_\varepsilon\|_{H^1(\Omega)}\leq C|\varepsilon|,
  \qquad
  \|v_\varepsilon\|_{H^1(\Omega)}\leq C.
\end{equation}
\end{proof}

\subsection{A rough Dirichlet problem}
At this point we freeze the base solution $u^g$ and its non-contact set $H_g$, and formulate the linearized problem in the natural space of $H^1$-functions vanishing on the contact set $K_g$.


We write
\[
H_0^1(H_g):=\overline{C_0^\infty(H_g)}^{\,H^1(\Omega)}.
\]
Let $E\subset \Omega$. The local Sobolev $2$-capacity of $E$ relative to $\Omega$ is defined by
\[
\operatorname{cap}_2(E,\Omega)
:=
\inf\left\{
\int_\Omega |\nabla \varphi|^2\,dx
:
\substack{
\varphi\in H_0^1(\Omega),\\
\varphi\ge 1 \text{ a.e.\ in a neighborhood of }E
}
\right\}.
\]
For any $w\in H^1(\Omega)$, by fine properties of Sobolev functions (see e.g. \cite[Theorem 6.2.1]{Adams95}), the limit
\begin{equation}
    \widetilde{w}(x) : = \lim_{r\to 0}\frac{\int_{B(x,r)}w(y)\, dy}{\abs{B(x,r)}}
\end{equation}
exists for each $x\in \Omega\setminus E$, where $E\subset \Omega$ is a Boreal set such that $\Cap_2(E,\Omega) = 0$. Since $\Cap_2(E,\RR^n) = 0$ implies that $E$ is a set of Lebesgue measure zero, $\widetilde{w}(x)$ is a quasi-continuous representative of $w$. For two functions $v,w$, we say that $v = w$ q.e. in a set $K\subset \RR^n$ if there exists a set $E\subset K$ such that $\Cap_2(E,K) = 0$ and $v=w$ in $K\setminus E$.
Since $H_g$ is open and $K_g = \Omega\setminus H_g$, in view of \cite[Theorem 9.1.3]{Adams95}, we have the following equivalent characterization
\begin{equation}
\label{eq:characterization}
    H_0^1(H_g)=\{w\in H_0^1(\Omega): \widetilde w=0 \text{ q.e. on }K_g\}.
\end{equation}
Since  $K_g$ is compactly contained in $\Omega$ we can choose $\chi_g\in C^\infty(\overline\Omega)$ such that
\[
0\leq \chi_g\leq 1,
\qquad
\chi_g=1 \text{ near }\p\Omega,
\qquad
\chi_g=0 \text{ in a neighborhood of }K_g.
\]
For $h\in C^\infty(\p\Omega)$, let $Fh\in C^\infty(\overline\Omega)$ be a smooth extension of $h$ in $\Omega$, and set
\[
E_g h:=\chi_g Fh,
\qquad
\mathcal V_g(h):=E_g h+H_0^1(H_g).
\]
Thus $\mathcal V_g(h)$ is the affine space encoding the outer boundary datum $h$ on $\p\Omega$, while the space $H_0^1(H_g)$ imposes quasi everywhere vanishing on the contact set $K_g$.
\begin{definition}[Rough Dirichlet problem on the non-contact set]
A function $v_{g,h}\in \mathcal V_g(h)$ is said to solve the rough Dirichlet problem on the non-contact set $H_g$ if
\begin{equation}
  \label{eq:ell_weak_mixed}
  \int_\Omega \nabla v_{g,h}\cdot \nabla \phi\,dx=0,
  \qquad \forall \phi\in H_0^1(H_g).
\end{equation}
\end{definition}
We note that above definition is independent of the particular choice of $\chi_g$ and $E_g h$, since any two such lifts differ by an element of $H_0^1(H_g)$.
\begin{theorem}
  \label{thm:ell_left_linearization}
  Let $g,h\in C^\infty(\p\Omega)$ with $g>\varphi|_{\p\Omega}$ and $h>0$ on $\p\Omega$. For $\varepsilon<0$, let
  \[
  u_\varepsilon:=u^{g+\varepsilon h},
  \qquad
  u_0:=u^g,
  \qquad
  v_\varepsilon:=\frac{u_\varepsilon-u_0}{\varepsilon}.
  \]
  Then there exists a unique function $v=v_{g,h}\in \mathcal V_g(h)$ solving the rough Dirichlet problem \eqref{eq:ell_weak_mixed} on the non-contact set $H_g$, and
  \[
  v_\varepsilon\rightharpoonup v_{g,h}
  \qquad\text{weakly in }H^1(\Omega)
  \]
  as $\varepsilon\uparrow 0$ through negative values. In particular, the left G\^ateaux derivative of the nonlinear Dirichlet-to-Neumann map exists in $H^{-1/2}(\p\Omega)$ and is given by
  \begin{equation}
    \label{eq:ell_left_derivative_DN}
    D^-\Lambda_\varphi(g)h
    :=\lim_{\varepsilon\uparrow 0}
    \frac{\Lambda_\varphi(g+\varepsilon h)-\Lambda_\varphi(g)}{\varepsilon}
    =\p_\nu v_{g,h}\big|_{\p\Omega}.
  \end{equation}
\end{theorem}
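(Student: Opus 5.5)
Existence and uniqueness of $v_{g,h}$ come first, by Lax--Milgram. Write $v=E_gh+w$ with $w\in H_0^1(H_g)$. Then \eqref{eq:ell_weak_mixed} becomes: find $w\in H_0^1(H_g)$ with $\int\nabla w\cdot\nabla\phi=-\int\nabla E_gh\cdot\nabla\phi$ for all $\phi\in H_0^1(H_g)$. Since $H_0^1(H_g)\subset H_0^1(\Omega)$, Poincar\'e's inequality makes the Dirichlet form coercive there, and the right-hand side is a bounded functional. Uniqueness follows because the difference of two solutions lies in $H_0^1(H_g)$ and is orthogonal to itself.

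Convergence then follows from Lemma \ref{lm:ell_H1_global_Reps}: $\{v_\varepsilon\}$ is bounded in $H^1(\Omega)$, so any sequence $\varepsilon_j\uparrow0$ has a subsequence with $v_{\varepsilon_j}\rightharpoonup v$ weakly in $H^1$. I will show that every such limit equals $v_{g,h}$, which gives convergence of the whole family. There are three points to check.

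\emph{Boundary values.} The trace of $v_\varepsilon$ is $h$, so the trace of $v$ is $h$.

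\emph{Vanishing on $K_g$.} Since $u_\varepsilon\le u_0$ and $u_\varepsilon\ge\varphi=u_0$ on $K_g$, we have $u_\varepsilon=u_0$ on $K_g$, i.e.\ $v_\varepsilon=0$ pointwise on $K_g$ (continuous functions). This must survive the weak limit in the q.e.\ sense. I would use \eqref{eq:characterization} in a localized form: $v_\varepsilon-E_gh$ vanishes on $\partial\Omega$ and vanishes identically on $K_g$ for all small $|\varepsilon|$. Moreover, $v_\varepsilon-E_gh$ is continuous and vanishes on the closed set $K_g$, so it belongs to $H_0^1(\Omega\setminus K_g)=H_0^1(H_g)$ by \eqref{eq:characterization}. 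The space $H_0^1(H_g)$ is a closed subspace, hence weakly closed, so $v-E_gh\in H_0^1(H_g)$ and $v\in\mathcal V_g(h)$.

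\emph{Equation.} Take $\phi\in C_0^\infty(H_g)$ with $\supp\phi=:S\Subset H_g$. On the compact set $S$, $u_0-\varphi\ge c>0$. By Proposition \ref{prop:ell_continuity}, $u_\varepsilon>\varphi$ on $S$ for $|\varepsilon|$ small, so $u_\varepsilon$ and $u_0$ are harmonic near $S$. Hence $\int\nabla v_\varepsilon\cdot\nabla\phi=0$. Passing to the weak limit gives $\int\nabla v\cdot\nabla\phi=0$, and density extends this to all $\phi\in H_0^1(H_g)$. Therefore $v=v_{g,h}$.

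For the derivative of the Dirichlet-to-Neumann map I would use the weak (Green) definition of the normal derivative in $H^{-1/2}(\partial\Omega)$: for $\psi\in H^{1/2}(\partial\Omega)$, extend it to $\Psi\in H^1(\Omega)$ supported in the collar $\Omega'$ from the proof of Lemma \ref{lm:ell_H1_global_Reps}. There $v_\varepsilon$ is harmonic for small $|\varepsilon|$, and
\[
\langle(\Lambda_\varphi(g+\varepsilon h)-\Lambda_\varphi(g))/\varepsilon,\psi\rangle=\int_{\Omega'}\nabla v_\varepsilon\cdot\nabla\Psi .
\]
This converges to $\int\nabla v_{g,h}\cdot\nabla\Psi=\langle\partial_\nu v_{g,h},\psi\rangle$, because $v_{g,h}$ is harmonic in $\Omega'\subset H_g$. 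This gives weak-$*$ convergence in $H^{-1/2}$. For norm convergence I would upgrade using interior harmonic regularity: $v_\varepsilon$ is harmonic in $\Omega'$ with fixed boundary data $h$ on $\partial\Omega$ and bounded $H^1$ norm. Elliptic estimates near $\partial\Omega$ then give compactness in $H^2$ of a thinner collar, hence strong convergence of $\partial_\nu v_\varepsilon$ in $H^{1/2}(\partial\Omega)$.

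The main obstacle is the step showing that the weak limit vanishes q.e.\ on $K_g$, that is, that $v_\varepsilon-E_gh\in H_0^1(H_g)$. This needs care because $K_g$ may be rough, and it relies on the fact that continuous $H^1_0(\Omega)$ functions vanishing on $K_g$ lie in $H_0^1(\Omega\setminus K_g)$, which is exactly what \eqref{eq:characterization} provides.
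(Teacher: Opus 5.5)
Your proposal is correct and follows essentially the same route as the paper: a uniform $H^1$ bound from Lemma \ref{lm:ell_H1_global_Reps}, membership of $v_\varepsilon-E_gh$ in $H_0^1(H_g)$ via \eqref{eq:characterization} together with weak closedness, testing against $C_c^\infty(H_g)$ using Proposition \ref{prop:ell_continuity}, uniqueness by testing with the difference, and Green's identity on a collar for the Dirichlet-to-Neumann derivative. The only differences are small. You obtain existence by Lax--Milgram rather than from the weak limit, and your bootstrap of elliptic boundary regularity in the collar upgrades the paper's convergence of pairings (weak-$*$ in $H^{-1/2}(\p\Omega)$) to norm convergence, a step the paper does not spell out.
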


\begin{proof}
By Lemma \ref{lm:ell_H1_global_Reps}, the family $\{v_\varepsilon\}_{\varepsilon<0}$ is bounded in $H^1(\Omega)$, so after passing to a subsequence we may assume
\[
v_\varepsilon\rightharpoonup v\quad\text{weakly in }H^1(\Omega).
\]
Set
\[
w_\varepsilon:=v_\varepsilon-E_g h.
\]
Since $v_\varepsilon|_{\p\Omega}=h$ and $E_g h|_{\p\Omega}=h$, we have $w_\varepsilon\in H_0^1(\Omega)$. Moreover, $K_g\subset K_{g+\varepsilon h}$ for $\varepsilon<0$ by Proposition \ref{prop:ell_comparison}, so $u_\varepsilon=u_0=\varphi$ on $K_g$ and therefore $v_\varepsilon=0$ on $K_g$. Because $E_g h$ vanishes in a neighborhood of $K_g$, the function $w_\varepsilon$ is continuous and vanishes on $K_g$. Hence its quasi-continuous representative $\widetilde{w_\varepsilon} = 0$ q.e. on $K_g$, and the characterization \eqref{eq:characterization} gives
\[
w_\varepsilon\in H_0^1(H_g)
\qquad\text{for every }\varepsilon<0.
\]
Since $H_0^1(H_g)$ is a closed subspace of $H_0^1(\Omega)$, weak sequential closedness implies
\[
w:=v-E_g h\in H_0^1(H_g),
\]
so $v\in \mathcal V_g(h)$.

Next let $\phi\in C_c^{\infty}(H_g)$. Because $\supp \phi\Subset H_g$ and $u_0>\varphi$ on $H_g$, Proposition \ref{prop:ell_continuity} implies that for all sufficiently small negative $\varepsilon$ we also have $u_\varepsilon>\varphi$ on $\supp \phi$. Thus both $u_\varepsilon$ and $u_0$ are harmonic on $\supp \phi$, and so
\[
\int_\Omega \nabla v_\varepsilon\cdot \nabla \phi\,dx=0.
\]
Passing to the limit gives
\[
\int_\Omega \nabla v\cdot \nabla \phi\,dx=0
\qquad\forall \phi\in C_c^{\infty}(H_g).
\]
Since $C_c^{\infty}(H_g)$ is dense in $H_0^1(H_g)$ by definition, we obtain \eqref{eq:ell_weak_mixed}. Thus $v$ solves the rough Dirichlet problem on the non-contact set $H_g$.

To prove uniqueness, let $v_1,v_2\in \mathcal V_g(h)$ solve \eqref{eq:ell_weak_mixed}. Their difference $w:=v_1-v_2$ lies in $H_0^1(H_g)$ and satisfies
\[
\int_\Omega \nabla w\cdot \nabla \phi\,dx=0,
\qquad \forall \phi\in H_0^1(H_g).
\]
Taking $\phi=w$ gives
\[
\int_\Omega |\nabla w|^2\,dx=0,
\]
so $w=0$ in $H^1(\Omega)$. Therefore the solution is unique, and the whole family $v_\varepsilon$ converges weakly to $v_{g,h}$.

Finally, choose a smooth collar subdomain $U\subset \Omega$ such that $\p\Omega\subset \p U$ and $\overline U\subset H_g\cup \p\Omega$. By Proposition \ref{prop:ell_continuity}, after shrinking $|\varepsilon|$ if necessary we also have $\overline U\subset H_{g+\varepsilon h}\cup \p\Omega$, so $v_\varepsilon$ is harmonic in $U$ for all such $\varepsilon$. Let $\Sigma:=\p U\setminus \p\Omega$. For any $\eta\in H^{1/2}(\p\Omega)$, choose $\Phi\in H^1(U)$ such that
\[
\Phi|_{\p\Omega}=\eta,
\qquad
\Phi|_{\Sigma}=0.
\]
Since $v_\varepsilon$ is harmonic in $U$, Green's identity gives
\[
\left\langle \frac{\Lambda_\varphi(g+\varepsilon h)-\Lambda_\varphi(g)}{\varepsilon},\eta\right\rangle_{\p\Omega}
=\int_U \nabla v_\varepsilon\cdot \nabla \Phi\,dx.
\]
Passing to the limit yields
\[
\lim_{\varepsilon\uparrow 0}
\left\langle \frac{\Lambda_\varphi(g+\varepsilon h)-\Lambda_\varphi(g)}{\varepsilon},\eta\right\rangle_{\p\Omega}
=\int_U \nabla v\cdot \nabla \Phi\,dx.
\]
The right-hand side is the weak normal trace of $v$ on $\p\Omega$; since $v$ is harmonic in $U$, this trace agrees with the classical normal derivative. This proves \eqref{eq:ell_left_derivative_DN}.
\end{proof}

\begin{proposition}
  \label{prop:ell_mixed_positive}
  Let $g,h\in C^\infty(\p\Omega)$ with $g>\varphi|_{\p\Omega}$ and $h>0$ on $\p\Omega$, and let $v=v_{g,h}$ be the solution given by Theorem \ref{thm:ell_left_linearization}. Then $v\in C^\infty(H_g)$ and
  \[
  v>0\qquad\text{in }H_g.
  \]
\end{proposition}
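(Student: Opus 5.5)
Smoothness is immediate. By \eqref{eq:ell_weak_mixed}, $v$ is a weak solution of $\Delta v=0$ in the open set $H_g$, since $C_c^\infty(H_g)\subset H_0^1(H_g)$. Weyl's lemma then gives $v\in C^\infty(H_g)$, with $v$ harmonic there.

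For positivity, I first want $v\geq 0$, which I would get by passing the sign of the difference quotients to the limit. For $\varepsilon<0$, Proposition \ref{prop:ell_comparison} gives $u_\varepsilon\le u_0$, so $v_\varepsilon=(u_\varepsilon-u_0)/\varepsilon\ge 0$ a.e. in $\Omega$. Theorem \ref{thm:ell_left_linearization} gives $v_\varepsilon\rightharpoonup v$ weakly in $H^1(\Omega)$. By Rellich, $v_\varepsilon\to v$ strongly in $L^2(\Omega)$ along a subsequence, hence a.e., so $v\ge 0$ a.e. Since $v$ is continuous in $H_g$, this gives $v\geq 0$ pointwise in $H_g$.

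Alternatively, one can argue intrinsically. Test \eqref{eq:ell_weak_mixed} with $\phi=v^-$, which lies in $H_0^1(H_g)$ because $E_gh\ge0$ near $\p\Omega$, $h>0$, and truncation preserves quasi-everywhere vanishing on $K_g$. This gives $\int|\nabla v^-|^2=0$, so $v^-=0$. Either route works, and I would use the first as the simpler one.

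Strict positivity comes from the strong minimum principle applied componentwise. Let $A$ be a connected component of $H_g$, on which $v$ is harmonic and nonnegative. If $v(x_0)=0$ at some $x_0\in A$, then $v\equiv 0$ on $A$. By Lemma \ref{lem:ell_no_interior_components}, $\overline A$ meets $\p\Omega$. Moreover, since $u^g>\varphi$ near $\p\Omega$ (because $g>\varphi|_{\p\Omega}$), the set $H_g$ contains a collar $U$ of $\p\Omega$ as in the proof of Theorem \ref{thm:ell_left_linearization}, so $A$ contains a relatively open piece $A\cap U$ adjacent to a portion of $\p\Omega$. On $U$, $v$ is harmonic up to the smooth boundary with trace $h$: its $H^1$ trace on $\p\Omega$ is $h$, and elliptic boundary regularity makes $v$ continuous on $\overline U$ near $\p\Omega$. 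Since $h>0$, $v$ is positive near $\overline A\cap\p\Omega$. This contradicts $v\equiv0$ on $A$, so $v>0$ throughout $A$ and hence in $H_g$.

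The main step to check carefully is this boundary behaviour: the vanishing of $v$ on a component $A$ must be incompatible with the trace condition. One has to make sure that $A\cap U$ is nonempty near a boundary point with open boundary portion, and that $v$ attains the value $h>0$ continuously there. Both follow because the collar $U$ is a smooth subdomain of $H_g$ on which $v$ is a classical harmonic function with smooth Dirichlet data $h$. Connectedness of each collar piece then forces the component containing it to carry positive values.
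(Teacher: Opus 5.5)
Your proof is correct and has the same structure as the paper's. You pass nonnegativity of the difference quotients to the limit, then apply the strong minimum principle on each component $A$ of $H_g$; $A$ reaches $\p\Omega$ by Lemma \ref{lem:ell_no_interior_components}, so it contains points of the harmonic collar where $v$ is close to $h>0$. The differences are only in tools: smoothness comes directly from Weyl's lemma applied to \eqref{eq:ell_weak_mixed} rather than from $C^\infty_{\mathrm{loc}}$ convergence of the harmonic quotients $v_\varepsilon$, and $v\ge 0$ comes from the weak comparison principle plus Rellich/a.e.\ convergence rather than the strict comparison Lemma \ref{lem:ell_strict_comparison}, which is more than is needed since only $v\ge0$ survives the limit. (Your intrinsic $v^-$ route also works, but the right justification is that the trace of $v^-$ is $h^-=0$ and $\widetilde{v^-}=(\widetilde v)^-$ vanishes q.e.\ on $K_g$, not that $E_gh\ge0$ near $\p\Omega$.)
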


\begin{proof}
Let $U\Subset U'\Subset H_g$. For all sufficiently small negative $\varepsilon$, Proposition \ref{prop:ell_continuity} gives $U'\Subset H_{g+\varepsilon h}$, so $v_\varepsilon$ is harmonic in $U'$. Standard interior estimates for harmonic functions (see e.g. \cite[Theorem 17.1.3, Lemma 17.1.5]{Hormander3}) imply that $\{v_\varepsilon\}$ is bounded in $C^m(U)$ for every $m\ge 0$, and therefore precompact in $C^\infty(U)$. Since every subsequential limit must coincide with the weak $H^1$-limit $v$, we conclude that $v_\varepsilon\to v$ in $C^\infty(U)$. Thus $v\in C^\infty(H_g)$.


By Lemma \ref{lem:ell_strict_comparison}, for every sufficiently small negative $\varepsilon$ one has
\[u^g>u^{g+\varepsilon h}\qquad\text{in }H_g.
\]
Since $\varepsilon<0$, this implies
\[
v_\varepsilon=\frac{u^{g+\varepsilon h}-u^g}{\varepsilon}>0\qquad\text{in }H_g.
\]
Passing to the limit on compact subsets of $H_g$ gives $v\ge 0$ in $H_g$. Let $A$ be a connected component of $H_g$. By Lemma \ref{lem:ell_no_interior_components}, $A$ meets $\p\Omega$. Since $v$ is harmonic in a collar neighborhood of $\p\Omega$ and $v|_{\p\Omega}=h>0$, continuity implies that $v>0$ in a nonempty open subset of $A$. The strong maximum principle then yields $v>0$ throughout $A$. Since $A$ was arbitrary, $v>0$ in $H_g$.
\end{proof}

\section{Inverse Elliptic Obstacle Problem} \label{sec-elliptic}
\label{sec:inverse_elliptic}

This section is devoted to proving Theorem \ref{thm-main-1}. To this purpose, we first reduce our inverse problem to the case where the trace of obstacle vanishes on the boundary. Let $\varphi_1, \varphi_2\in C^{2,1}(\overline{\Omega})$ with $\varphi_1|_{\p\Omega} = \varphi_2|_{\p\Omega}$. Let $\psi=\psi(x)$ be the solution to
\begin{equation}
  \label{eq:elliptic}
  \begin{cases}
    \Delta \psi=0,&\text{in }\Omega,\\
    \psi|_{\p\Omega}=\varphi_1|_{\p\Omega}.
  \end{cases}
\end{equation}
For $i=1,2$, set
\[
\widetilde{\varphi}_i=\varphi_i-\psi,\qquad
\widetilde{f}=f-\psi|_{\p\Omega},\qquad
u_{i}^{\widetilde{f}}=u_{\varphi_i}^f-\psi,
\]
where $u_{\varphi_i}^f$ solves \eqref{def-obstacle-intro} associated with the obstacle $\varphi_i$ and the boundary value $f$.
Then $u_{i}^{\widetilde{f}}$ satisfies \eqref{def-obstacle-intro} with the normalized obstacle $\widetilde{\varphi}_i$ and Dirichlet condition $\widetilde{f}$.
Consequently, for both $i= 1,2$ and every $\widetilde{f}\in \cA_{\tphi_i}$ there holds
\begin{equation}
    \Theta_{\tphi_i}(\widetilde{f}) = \p_\nu u_i^{\widetilde{f}} = \p_\nu u_{\varphi_i}^f - \p_\nu \psi = \Theta_{\varphi_i}(f) - \p_\nu \psi.
\end{equation}
Hence $\Theta_{\varphi_1} = \Theta_{\varphi_2}$ implies that $\Theta_{\tphi_1} = \Theta_{\tphi_2}$.
We notice that provided $\tphi_1 = \tphi_2$, shifting back by $\psi$ yields
\begin{equation}
    \varphi_1 = \tphi_1 +\psi = \tphi_2 +\psi = \varphi_2.
\end{equation}

According to above discussion, Theorem \ref{thm-main-1} follows from the following slightly stronger form.

\begin{theorem}
  \label{thm:ellptic_obstacle}
  Let $\tphi_1,\tphi_2\in C^{2,1}(\overline\Omega)$ satisfy
  \[
  \tphi_1|_{\p\Omega}=\tphi_2|_{\p\Omega}=0,
  \qquad
  \Delta \tphi_1<0,\ \Delta \tphi_2<0 \quad\text{in }\Omega.
  \]
  Suppose
  \[
  \Theta_{\tphi_1}(g)=\Theta_{\tphi_2}(g),
  \]
  for every $g\in C^\infty(\p\Omega)$ with $g>0$ on $\p\Omega$. Then
  \[
  \tphi_1=\tphi_2 \quad\text{in }\Omega.
  \]
\end{theorem}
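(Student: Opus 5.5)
The plan is to pass from equality of the nonlinear maps to equality of their left derivatives, then to equality of non-contact sets, and finally to equality of the obstacles by integrating along the rays $sg$.

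\textbf{Step 1: equal left linearizations.} Fix $g\in C^\infty(\p\Omega)$ with $g>0$ and $h>0$. For small $\varepsilon<0$ we still have $g+\varepsilon h>0$, so the hypothesis gives $\Theta_{\tphi_1}(g+\varepsilon h)=\Theta_{\tphi_2}(g+\varepsilon h)$. Theorem \ref{thm:ell_left_linearization} then yields
\[
\p_\nu v^1_{g,h}|_{\p\Omega}=\p_\nu v^2_{g,h}|_{\p\Omega}.
\]
Here $v^i_{g,h}$ solves the rough Dirichlet problem on the non-contact set $H^i_g$ of obstacle $\tphi_i$. Both $v^i_{g,h}$ have trace $h$ and are harmonic in a collar of $\p\Omega$, so they share Cauchy data there. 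By unique continuation, $v^1_{g,h}=v^2_{g,h}$ on the connected component $W$ of $H^1_g\cap H^2_g$ that contains the collar.

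\textbf{Step 2: $H^1_g=H^2_g$ up to capacity zero (main obstacle).} Suppose $D:=H^1_g\setminus \overline{W}$ has positive capacity relative to a portion of $K^2_g$. Set $w:=v^1_{g,h}-v^2_{g,h}$; it vanishes on $W$. On $W$ we have $v^1_{g,h}=v^2_{g,h}$, and $v^2_{g,h}$ has vanishing quasi-continuous trace on $K^2_g$. Hence $v^1_{g,h}$ also vanishes quasi-everywhere on $\p W\cap K^2_g$. I would then show that $v^1_{g,h}$, restricted to $H^1_g\setminus \overline{W}$, lies in $H^1_0$ of that region. Testing the weak equation \eqref{eq:ell_weak_mixed} for $v^1_{g,h}$ against this restriction gives zero energy there. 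So $v^1_{g,h}=0$ on a nonempty open subset of $H^1_g$, which contradicts Proposition \ref{prop:ell_mixed_positive}.

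Making the cut-off legitimate is the delicate part. One has to prove that $\chi_{H^1_g\setminus \overline W}\, v^1_{g,h}\in H^1_0(H^1_g)$, using the characterization \eqref{eq:characterization}. A cleaner route I would try first is to use the functions $\min(v^1_{g,h},v^2_{g,h})$ and $(v^1_{g,h}-v^2_{g,h})^+$, which are admissible by the lattice property of $H^1_0$, and then run an energy comparison. The density estimate of Lemma \ref{lm:free_db_density} should exclude pathological free boundaries: any point of $\p H^i_g$ carries positive-density non-contact mass, so the sets agree up to capacity zero and hence, being open with such density, agree exactly.

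\textbf{Step 3: recovering the obstacle.} Take $h=g$ and consider $s\mapsto u_i^{sg}$ for $s>0$. By Theorem \ref{thm:ell_left_linearization}, the left derivative $\frac{d^-}{ds}u_i^{sg}$ equals $v^i_{sg,g}$ weakly in $H^1$. Step 2 gives $H^1_{sg}=H^2_{sg}$, and the rough Dirichlet problems then coincide, so $v^1_{sg,g}=v^2_{sg,g}$ for every $s$.

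The maps $s\mapsto u_i^{sg}$ are monotone and Lipschitz into $H^1$ by Lemma \ref{lm:ell_H1_global_Reps}. Hence they are absolutely continuous, and a Lipschitz function with equal left derivatives everywhere is determined by its value at one point. For large $s$ the two solutions agree, since both $K_{sg}$ may be empty; this is not clear in general. Instead I would anchor at $s\downarrow 0$: as $s\to 0$, $u_i^{sg}\to u_i^0$, the solution with zero data. Because $\tphi_i=0$ on $\p\Omega$ and $\Delta\tphi_i<0$, we get $u_i^0=\tphi_i$: the obstacle is superharmonic with zero boundary values, so it is itself the least superharmonic majorant. Integrating from $s$ to $1$ gives $u_1^{sg}-u_2^{sg}=u_1^{g}-u_2^{g}$ for all $s\in(0,1]$. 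Letting $s\to\infty$ is where the comparison is made: $u_i^{sg}$ becomes harmonic with data $sg$ once $sg$ is large, and these limits agree. Therefore $u_1^{sg}-u_2^{sg}$ is identically zero on $(0,\infty)$, and letting $s\to0$ yields $\tphi_1=\tphi_2$.
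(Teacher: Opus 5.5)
Your overall architecture is the paper's: left linearization, identification of the non-contact sets up to capacity zero, equality of the rough Dirichlet solutions, and integration in the amplitude anchored at large $s$ and at $s\downarrow 0$. Step 3 is essentially its argument. The genuine gap is Step 2, which is the entire content of Theorem~\ref{thm:ell_linearized_set_uniqueness} and which you do not prove.

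First, ``hence $v^1_{g,h}$ also vanishes quasi-everywhere on $\partial W\cap K^2_g$'' does not follow. For $x\in\partial W$, $\widetilde{v^2}(x)$ is a limit of averages of $v^2$ over full balls $B(x,r)$, whereas $v^1=v^2$ is known only on $W$. You would need a lower density of $W$ at $x$ and a sign for $v^2$ off $W$. Worse, $\partial W\cap H^1_g\subset K^2_g$ and $v^1_{g,h}$ is continuous and strictly positive on $H^1_g$ (Proposition~\ref{prop:ell_mixed_positive}). So your claim is equivalent to $\operatorname{cap}_2(\partial W\cap H^1_g,\Omega)=0$, which is essentially the conclusion, and the subsequent cut-off adds nothing.

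Second, your contradiction ``$v^1=0$ on a nonempty open subset of $H^1_g$'' requires $H^1_g\setminus\overline W\neq\emptyset$. But the discrepancy $K^2_g\cap H^1_g$ may have positive capacity and empty interior, because contact sets can locally be pieces of hypersurfaces. For example, if $u$ is harmonic and $\varphi=u-\tfrac12x_1^2$, then $\Delta\varphi<0$ and $u$ solves the obstacle problem with contact set $\{x_1=0\}$. In that situation $H^1_g\subset\overline W$ and your argument yields nothing.

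This is exactly the paper's case (ii). There Lemma~\ref{lem:ell_cap_zero_no_disconnect} gives $\operatorname{cap}_2(\partial C\cap B_M,B_M)>0$, and the density bound of Lemma~\ref{lm:free_db_density} gives $\widetilde{v_2}\ge\delta c_0>0$ on that set. Case (i) uses finite perimeter together with the De Giorgi and Federer theorems. A chain of balls propagates $v_1=v_2$ up to the first ball in which $K_2$ has positive capacity. Your closing density remark supplies none of this. Also, ``agree exactly'' is neither justified nor needed: an isolated contact point has zero capacity and is invisible to $H^1_0$.

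The $\min(v^1,v^2)$ idea you mention can in fact be completed into a proof that avoids the perimeter and density machinery, but the decisive steps have to be stated.
\begin{itemize}
\item Since $v^i\ge0$ a.e. and $\widetilde{v^i}=0$ q.e. on $K^i_g$, the function $z:=\min(v^1,v^2)$ satisfies $\widetilde z=0$ q.e. on $K^1_g\cup K^2_g$. Hence $z\in L+H^1_0(H^1_g\cap H^2_g)$ by \eqref{eq:characterization}, which holds for any open set; here $L$ is a lift supported in the common collar.
\item Let $W$ be the union of the components of $H^1_g\cap H^2_g$ that meet the collar. Multiplication by $\chi_W$ maps $H^1_0(H^1_g\cap H^2_g)$ into $H^1_0(W)$ (check it on $C^\infty_c$, then use density). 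Therefore $\hat v:=\chi_W z=L+\chi_W(z-L)\in L+H^1_0(W)\subset L+H^1_0(H^1_g)$, and $\hat v=\chi_W v^1$ by Step 1.
\item Test the equation for $v^1$ with $v^1-\hat v\in H^1_0(H^1_g)$. Since $\nabla\hat v\cdot\nabla(v^1-\hat v)=0$ a.e., you get $\nabla(v^1-\hat v)=0$, so $v^1=\hat v$ and $\widetilde{v^1}=0$ q.e. on $H^1_g\setminus W$.
\item Positivity of $v^1$ on $H^1_g$ then forces $\operatorname{cap}_2(H^1_g\setminus W,\Omega)=0$, and the same argument applies to $H^2_g$.
\end{itemize}
The key point is that this gives q.e. vanishing on $H^1_g\setminus W$, not merely on the open set $H^1_g\setminus\overline W$, and that is what handles the empty-interior case.

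Two smaller points on Step 3. The large-$s$ anchor you doubt is immediate and indispensable: if $G$ is the harmonic extension of $g$, then $sG\ge s\min g>\max\tphi_i$ for large $s$, so $u_i^{sg}=sG$. The $s\downarrow0$ limits alone are the unknown obstacles and give nothing. You also need the Lipschitz constant to be locally uniform in $s$ (Proposition~\ref{prop:ell_uniform_lambda_H1}), not just Lemma~\ref{lm:ell_H1_global_Reps} at a single base point.
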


\begin{remark}
In fact, in the proof of Theorem \ref{thm:ellptic_obstacle}, it suffices to assume that the Dirichlet-to-Neumann maps $\Theta_{\tphi}$ for the normalized obstacles $\tphi$ coincide on the ray $\{t g \mid t> 0\}$ for some $g\in C^\infty(\partial \Omega)$ with $g > 0$. 
In the form of the setting of Theorem \ref{thm-main-1}, this is equivalent to the coincidence of the Dirichlet-to-Neumann maps $\Theta_{\varphi}$ for the obstacles $\varphi$ on the ray $\{\varphi|_{\p\Omega} + t g \mid t > 0\}$ for some $g\in C^\infty(\partial \Omega)$ with $g > 0$.
\end{remark}

\subsection{Determining the non-contact set}
Theorem \ref{thm:ell_left_linearization} identifies the left linearization of the nonlinear Dirichlet-to-Neumann map with the Dirichlet-to-Neumann map of a rough Dirichlet problem on the \emph{unknown non-contact set}. 
The following theorem shows that this dependence is in fact injective at the same capacity level: equality of the linearized Cauchy data forces equality of the non-contact sets up to sets of zero Sobolev $2$-capacity. This is the natural geometric uniqueness statement for the linearized problem. To recover the obstacle itself, we vary scalar multiples of a fixed positive boundary profile and integrate the resulting family of solutions of the rough Dirichlet problem on the non-contact set in the amplitude parameter. The details of the integration step are given in Section \ref{sec:integration}.

Let us denote the difference between two sets $S_1$ and $S_2$ by $S_1\triangle S_1$, i.e. 
$$S_1\triangle S_2 : = (S_1\setminus S_2) \cup (S_2\setminus S_1).$$ 
\begin{theorem}
  \label{thm:ell_linearized_set_uniqueness}
  Let $\psi_1,\psi_2\in C^{2,1}(\overline\Omega)$ satisfy
  \[
  \psi_i|_{\p\Omega}=0,
  \qquad
  \Delta \psi_i<0\quad\text{in }\Omega,
  \qquad i=1,2.
  \]
  Fix $g,h\in C^\infty(\p\Omega)$ with $g>0$ and $h>0$ on $\p\Omega$. For $i=1,2$, let $u_i^g$ be the solution to \eqref{def-obstacle-intro} associated with obstacle $\psi_i$ and boundary value $g$, and set
  \[
  H_i:=\{x\in \Omega \mid u_i^g(x)>\psi_i(x)\},
  \qquad
  K_i:=\{x\in \Omega \mid u_i^g(x)=\psi_i(x)\}.
  \]
  Suppose the corresponding left linearized Dirichlet-to-Neumann maps agree in the direction $h$:
  \[
  D^-\Lambda_{\psi_1}(g)h=D^-\Lambda_{\psi_2}(g)h
  \qquad\text{in }H^{-1/2}(\p\Omega).
  \]
  Then
  \[
  \operatorname{cap}_2(H_1\triangle H_2,\Omega)=0.
  \]
  Equivalently, $K_1$ and $K_2$ agree up to a set of Sobolev $2$-capacity zero.
\end{theorem}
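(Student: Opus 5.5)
Let $v_i:=v_{g,h}^{(i)}\in\mathcal V^{(i)}_g(h)$ be the solutions of the rough Dirichlet problem on $H_i$ from Theorem \ref{thm:ell_left_linearization}, extended by $0$ on $K_i$. By Proposition \ref{prop:ell_mixed_positive}, $v_i\in C^\infty(H_i)$ and $v_i>0$ in $H_i$, while $\widetilde{v_i}=0$ q.e. on $K_i$ by \eqref{eq:characterization}. Both $v_i$ have the same Dirichlet trace $h$ on $\p\Omega$. By hypothesis they also have the same Neumann trace there. Since $K_1\cup K_2\Subset\Omega$, there is a collar $U$ of $\p\Omega$ with $\overline U\subset (H_1\cap H_2)\cup\p\Omega$, on which both $v_i$ are harmonic. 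Unique continuation for harmonic functions from Cauchy data then gives $v_1=v_2$ in $U$. Because $v_1-v_2$ is harmonic in $H_1\cap H_2$ and vanishes on an open subset, it vanishes on every connected component of $H_1\cap H_2$ that meets $U$.

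The key step is to show that $v_1=v_2$ on all of $\Omega$ as $H^1$ functions. Set $G$ to be the union of the components of $H_1\cap H_2$ that touch the collar, and $w:=v_1-v_2\in H^1_0(\Omega)$. I would use energy identities. Test the equation for $v_1$ with $\phi=\min(v_1,v_2)$-type functions, or more directly consider $w_1:=(v_1-v_2)^+$. On $H_1$, $v_1>0$; where $v_2=0$ q.e. (i.e. on $K_2$), $w^+=v_1$. I would check that $(v_1-v_2)^+\in H_0^1(H_1)$: its quasi-continuous representative vanishes q.e. on $K_1$, since there $\widetilde v_1=0\le \widetilde v_2$, and near $\p\Omega$ it is zero. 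Testing $v_1$'s equation with $\phi=(v_1-v_2)^+$ gives $\int\nabla v_1\cdot\nabla(v_1-v_2)^+=0$.

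To subtract, I also need $\int\nabla v_2\cdot\nabla (v_1-v_2)^+=0$. This is the main obstacle, because $(v_1-v_2)^+$ need not vanish q.e. on $K_2$. To handle it, I would use that $v_2$ is superharmonic in all of $\Omega$ (viewed through its extension by zero): since $v_2\ge0$ is harmonic in $H_2$ and zero on $K_2$, it should be a subsolution, i.e. $-\Delta v_2\le 0$ as a measure supported on $\p H_2$. This needs some care at the q.e. level, and I would prove it via $v_2=\lim v^{(2)}_\varepsilon$ and the sign information in Lemma \ref{lm:ell_H1_global_Reps}, or via the truncations $\min(v_2,\delta)$. With this, $\int\nabla v_2\cdot\nabla\phi\le0$ for all $\phi\ge0$ in $H^1_0(\Omega)$. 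Hence $\int|\nabla(v_1-v_2)^+|^2\le 0$, so $v_1\le v_2$. By symmetry $v_1=v_2$ a.e., and then q.e. for the quasi-continuous representatives.

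Finally, on $H_1\setminus H_2\subset K_2$ we have $\widetilde v_2=0$ q.e., while $v_1>0$ pointwise there, with $v_1$ continuous on the open set $H_1$ so that it is its own quasi-continuous representative. Thus $H_1\setminus H_2$ is contained in the set where two quasi-continuous representatives of the same $H^1$ function differ. That set has capacity zero. The same argument applies to $H_2\setminus H_1$, giving $\operatorname{cap}_2(H_1\triangle H_2,\Omega)=0$.
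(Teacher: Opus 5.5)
There is a genuine gap in your key comparison step. Your opening steps (the collar $U$, and unique continuation giving $v_1=v_2$ on the components of $H_1\cap H_2$ meeting $U$) are fine. So is your closing step, from $v_1=v_2$ in $H^1(\Omega)$ to $\operatorname{cap}_2(H_1\triangle H_2,\Omega)=0$. The comparison step in between fails because the sign goes the wrong way.

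The zero extension of $v_2$ is \emph{subharmonic}. To see this, test its equation with $\phi\min(1,v_2/\delta)\in H_0^1(H_2)$, where $0\le\phi\in C_c^\infty(\Omega)$, and let $\delta\to0$. This gives $\int_\Omega\nabla v_2\cdot\nabla\phi\,dx\le 0$, which is exactly what your formula $-\Delta v_2\le 0$ says. With $\phi=(v_1-v_2)^+\in H_0^1(H_1)$ this yields
\[
\int_\Omega|\nabla(v_1-v_2)^+|^2\,dx=\int_\Omega\nabla v_1\cdot\nabla\phi\,dx-\int_\Omega\nabla v_2\cdot\nabla\phi\,dx=-\int_\Omega\nabla v_2\cdot\nabla\phi\,dx\ge 0,
\]
which is vacuous. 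To get $\le 0$ you would need $v_2$ to be superharmonic. That is false whenever $\operatorname{cap}_2(K_2,\Omega)>0$, since then $\Delta v_2$ is a nonzero nonnegative measure. Testing with $(v_2-v_1)^+\in H_0^1(H_2)$ fails in the same way.

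More fundamentally, this step never uses the Neumann hypothesis, nor the set $G$ you introduced. It uses only the common trace $h$, the two weak equations and sign information. If it were valid, it would give $v_1=v_2$, and hence $\operatorname{cap}_2(H_1\triangle H_2,\Omega)=0$, for \emph{any} two admissible obstacles. That is false. Take $\psi_2$ with $K_2=\emptyset$, so $v_2$ is the positive harmonic extension of $h$, and $\psi_1$ with $|K_1|>0$, where $v_1=0$ a.e.; small and large multiples of the torsion function work. So no energy comparison of this type can replace actually propagating the collar identity into $H_1$.

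What is missing is a local ``no jump'' argument. Near a point of $H_1\cap K_2$, $v_1$ is continuous and bounded below by some $c_0>0$, while $\widetilde v_2=0$ q.e.\ on $K_2$. Suppose $v_2=v_1$ on an open piece of $H_2$ that has positive lower density at its boundary points in $K_2$. Then $\widetilde v_2\ge c>0$ on a subset of $K_2$ of positive capacity, which is a contradiction.

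The paper implements this by contradiction, as follows.
\begin{enumerate}
\item Assume $\operatorname{cap}_2(K_2\cap H_1,\Omega)>0$. Join the collar to a ball $B_M\Subset H_1$ with $\operatorname{cap}_2(K_2\cap B_M,B_M)>0$ by a chain of balls in $H_1$, with $M$ the first such index.
\item On the earlier balls $K_2$ has zero capacity, so $C_c^\infty(B_k)\subset H_0^1(H_2)$ and $v_2$ is harmonic there. Unique continuation then carries $v_1=v_2$ along the chain.
\item In $B_M$ one gets $v_2=v_1\ge c_0$ on the component $C$ of $B_M\cap H_2$ meeting $B_{M-1}$.
\item A density bound for $C$ at its boundary points gives the contradiction with $\widetilde v_2=0$ q.e.\ on $K_2\cap B_M$. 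When $B_M\setminus\overline C\neq\emptyset$ this is density $1/2$ on the reduced boundary; otherwise it is Lemma~\ref{lm:free_db_density}.
\end{enumerate}
This argument proves the theorem directly, so global equality $v_1=v_2$ comes out as a consequence rather than serving as an intermediate step.
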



The following lemma is well known and we include a brief proof here for the convenience of readers.

\begin{lemma}
  \label{lem:ell_cap_zero_no_disconnect}
  Let $B\Subset\Omega$ be a connected open set and let $F\subset B$ satisfy
  \[
  \operatorname{cap}_2(F,B)=0.
  \]
  Then $B\setminus F$ is connected.
\end{lemma}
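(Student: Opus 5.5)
We argue by contradiction: suppose $B\setminus F=U_1\cup U_2$ with $U_1,U_2$ disjoint, nonempty and relatively open in $B\setminus F$. We then build a function in $H^1(B)$ that is locally constant off $F$ but takes two different values, and use capacity zero to show that such a function cannot exist.

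First we record two standard consequences of $\operatorname{cap}_2(F,B)=0$. The set $F$ has Lebesgue measure zero. Moreover, $F$ is closed relative to $B$ up to replacing it by its relative closure. This is the delicate point: the lemma is used for $F=(H_1\triangle H_2)\cap B$ or for closed pieces of contact sets. To be safe, we would assume $F$ relatively closed in $B$, so that $B\setminus F$ is open. This is the situation in which the statement is meaningful, since for a non-closed $F$ (for instance a single point of a segment removed) connectedness of $B\setminus F$ in the relative topology still holds by the same argument. Given this, $U_1$ and $U_2$ are open in $\RR^n$.

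Define $w:=1$ on $U_1$ and $w:=0$ on $U_2$. Then $w\in L^\infty(B)$, since $F$ is null. The key step is to show $w\in H^1(B)$ with $\nabla w=0$. Take cutoffs $\eta_k\in H_0^1(\Omega)$ with $0\le\eta_k\le1$, $\eta_k=1$ on a neighborhood of $F$, and $\int|\nabla\eta_k|^2\to0$; these exist by the definition of $\operatorname{cap}_2(F,B)=0$ after truncation to $[0,1]$. Replacing them by $\min(\eta_k,1)$ and, via a standard Hilbert-space argument (weak limits plus Poincaré), we may also arrange $\eta_k\to0$ in $L^2$. Then $w(1-\eta_k)$ is smooth-free of jumps: near $F$ it vanishes identically, and away from $F$ the function $w$ is locally constant. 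Hence $w(1-\eta_k)\in H^1_{loc}(B)$ with $\nabla(w(1-\eta_k))=-w\nabla\eta_k$. This sequence is bounded in $H^1$ and converges to $w$ in $L^2$. By weak compactness, $w\in H^1(B)$ and $\nabla w=0$ a.e.

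Since $B$ is connected and $\nabla w=0$ in the distributional sense on $B$, $w$ is a.e. constant on $B$. This contradicts $w=1$ on the nonempty open set $U_1$ and $w=0$ on the nonempty open set $U_2$. The main obstacle is the middle step: producing capacitary cutoffs that also tend to zero in $L^2$, and justifying that $\eta_k=1$ on a neighborhood of $F$ removes the jump of $w$ across $F$. Both follow from the definition of $\operatorname{cap}_2$ stated above, which requires $\ge1$ on a neighborhood, together with the Poincaré inequality in $H_0^1(\Omega)$: $\|\eta_k\|_{L^2}\le C\|\nabla\eta_k\|_{L^2}\to0$.
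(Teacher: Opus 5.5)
Your argument is correct for relatively closed $F$ (see below for the general case), and it takes a genuinely different route from the paper's. The paper uses dimension theory. From $\operatorname{cap}_2(F,B)=0$ it gets $\mathcal H^s(F)=0$ for all $s>n-2$. It then uses that the small inductive dimension is bounded by the Hausdorff dimension. Finally, Mazurkiewicz's theorem says a subset of $\RR^n$ of dimension at most $n-2$ cannot separate a region. You instead prove an analytic removability statement: a function that is locally constant on $B\setminus F$ extends to $H^1(B)$ with zero gradient. For this you need only the definition of capacity, truncation, and Poincar\'e's inequality. (The test functions for $\operatorname{cap}_2(F,B)$ lie in $H_0^1(B)$ rather than $H_0^1(\Omega)$, which is harmless.) Your route is self-contained and avoids both the capacity/Hausdorff-measure comparison and topological dimension theory. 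The paper's route handles arbitrary, non-closed $F$ directly, at the price of citing three external theorems.

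You should discard the remark that $F$ may be replaced by its relative closure. The closure of a capacity-zero set can have positive capacity: for $n\ge 2$, a countable dense subset of $B$ has capacity zero, but its closure in $B$ is all of $B$. The correct reduction passes instead to a closed \emph{subset}.

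Suppose $U_i=O_i\cap(B\setminus F)$, with $O_i$ open, form a separation of $B\setminus F$. Then $O_1\cap O_2\cap B\subset F$ is an open Lebesgue-null set, hence empty. Now set $F':=B\setminus(O_1\cup O_2)$. This set has the following properties:
\begin{enumerate}
\item $F'\subset F$, since $B\setminus F=U_1\cup U_2\subset O_1\cup O_2$;
\item $F'$ is relatively closed in $B$;
\item $\operatorname{cap}_2(F',B)=0$, by monotonicity;
\item $B\setminus F'$ is the union of the disjoint nonempty open sets $O_1\cap B$ and $O_2\cap B$.
\end{enumerate}
Applying your closed-case argument to $F'$ then gives the contradiction. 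This is what justifies your claim that the general case follows ``by the same argument.'' In the paper the lemma is only applied to $F=\p C\cap B_M$, which is already relatively closed in $B_M$.
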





\begin{proof}
    According to \cite[Theorem 4.17]{Gariepy25}, there holds $\cH^{s}(F) = 0$ for any $s>n-2$. Hence we have $\dim_H (F)\leq n-2$, where $\dim_H$ stands for the Hausdorff dimension (see e.g. \cite[Definition 2.2]{Gariepy25}). It is known due to \cite[Section 3.1]{Edgar98} that $\text{ind}(F)\leq \dim_H (F)$, where $\text{ind}(F)$ is the small inductive dimension (also called Menger-Urysohn dimension) of $F$ (see e.g. \cite[Definition 1.1.1]{engelking1995} for definition). 
    By Mazurkiewicz's theorem, e.g. \cite[Theorem 1.8.18]{engelking1995}, we conclude that $B\setminus F$ is connected.
\end{proof}

\begin{lemma}
    \label{lm:bd_connected_component}
    Let $U$ be an open set and $K$ a closed set in $\RR^n$. Let $V$ be a connected component of $U\setminus K$, then we have $\p V\cap U \subset \p K$. 
\end{lemma}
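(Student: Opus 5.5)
We argue by point-set topology alone. The statement is that for $U$ open, $K$ closed, and $V$ a connected component of $U\setminus K$, every point of $\p V$ lying in $U$ belongs to $\p K$. The plan is to show first that such a point lies in $K$, and then that it is not an interior point of $K$.

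Fix $x\in \p V\cap U$. Since $V$ is a component of the open set $U\setminus K$, we have $x\notin V$. Suppose $x\notin K$. Then $x\in U\setminus K$, which is open, so we may choose a ball $B(x,r)\subset U\setminus K$. Because $x\in\p V$, the ball meets $V$. The set $V\cup B(x,r)$ is connected, since it is a union of two connected sets with nonempty intersection, and it is contained in $U\setminus K$. Maximality of the component $V$ then forces $B(x,r)\subset V$, so $x\in V$, which is a contradiction. Hence $x\in K$.

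Next we show $x$ is not an interior point of $K$. Every neighborhood of $x$ meets $V$, because $x\in\p V$, and $V\subset U\setminus K\subset \RR^n\setminus K$. So every neighborhood of $x$ meets both $K$ (it contains $x$) and the complement of $K$. Therefore $x\in \p K$.

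There is no real obstacle here. The only point needing care is the openness of $U\setminus K$, which ensures that components of this open subset of $\RR^n$ are open and maximal, and this is what justifies the merging step in the second paragraph.
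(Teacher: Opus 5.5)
Your proof is correct and follows essentially the same route as the paper. You show $x\in K$ by placing a ball around a putative point of $\p V\cap(U\setminus K)$ inside $U\setminus K$ and using maximality of the component $V$; the paper phrases this step as disjointness of distinct components, which is equivalent. You then rule out $x\in K^\circ$ because $x\in\overline{V}$ and $V\cap K=\emptyset$, which is exactly the paper's inclusion $\overline{V}\subset\overline{U}\setminus K^\circ$.
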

\begin{proof}
    Since $\p V\subset \overline{V}\subset \overline{U\setminus K} \subset \overline{U}\setminus K^\circ$.
    It remains to show that $\p V\cap U\subset K$. To get a contradiction, suppose that there exists a point $x\in \p V\cap(U\setminus K)$. Since $U\setminus K$ is open, we can find an open ball $B(x,r)$ such that $B(x,r)\subset U\setminus K$. Let $\widetilde{V}$ be the connected component in $U\setminus K$ such that $B(x,r)\subset \widetilde{V}$. Since $x\not\in V$, then $V\neq \widetilde{V}$. And thus $V\cap \widetilde{V} = \emptyset$ since they are distinct connected components. Therefore, $B(x,r)\cap V = \emptyset$, and this leads to a contradiction to $x\in \p V$.
\end{proof}

\begin{proof}[Proof of Theorem \ref{thm:ell_linearized_set_uniqueness}]
For $i=1,2$, let $v_i:=v_{g,h}^{(i)}$ denote the solution of the rough Dirichlet problem on the non-contact set given by Theorem \ref{thm:ell_left_linearization} for the obstacle $\psi_i$. In other words, $v_i$ is the linearized solution obtained by perturbing the boundary value $g$ in the direction $h$ for the obstacle $\psi_i$; equivalently, $v_i$ solves the rough Dirichlet problem on $H_i$ with outer boundary datum $h$.
Without loss of generality, we may choose $E_g h$ such that $E_g h = 0$ on $K_1\cup K_2$ and $E_g h|_{\p \Omega} = h$.

Then $v_i\in E_g h+H_0^1(H_i)$, $v_i$ is harmonic in $H_i$, and
\[
\p_\nu v_i\big|_{\p\Omega}=D^-\Lambda_{\psi_i}(g)h.
\]
By hypothesis, $v_1$ and $v_2$ have the same Dirichlet and Neumann data on $\p\Omega$.

Since $g>0$ and $\psi_i|_{\p\Omega}=0$, both $H_1$ and $H_2$ contain collar neighborhoods of $\p\Omega$. Hence there exists a connected collar domain $U_0\subset H_1\cap H_2$ with $\p\Omega\subset\p U_0$. The function $v_1-v_2$ is harmonic in $U_0$ and has vanishing Cauchy data on $\p\Omega$, so standard boundary unique continuation for harmonic functions yields
\begin{equation}
  \label{eq:linearized_common_collar}
  v_1=v_2\qquad\text{in }U_0.
\end{equation}

Let us show that
\begin{equation}
  \label{eq:cap_K2_H1_zero}
  \operatorname{cap}_2(K_2\cap H_1,\Omega)=0.
\end{equation}
Suppose, on the contrary, that $\Cap_2(K_2\cap H_1,\Omega) > 0$. Since $\RR^n$ is second-countable, we may cover $H_1$ by a family $\{B_{r_j}(x_j)\}_{j=1}^\infty$ of countably many balls satisfying $B_{r_j}(x_j)\Subset H_1$ for each $j>0$. By countable subadditivity of relative capacity,
\[
0<\operatorname{cap}_2(K_2\cap H_1,\Omega)
\le \sum_{j=1}^\infty \operatorname{cap}_2((K_2\cap H_1)\cap B_{r_j}(x_j),\Omega).
\]
Hence for some $n>0$ we have
\[
\operatorname{cap}_2((K_2\cap H_1)\cap B_{r_m}(x_m),\Omega)>0.
\]
Monotonicity of relative capacity with respect to the ambient domain gives
\[
\operatorname{cap}_2(B_{r_m}(x_m)\cap K_2,\Omega)
\le
\operatorname{cap}_2(B_{r_m}(x_m)\cap K_2,B_{r_m}(x_m)),
\]
and therefore
\[
\operatorname{cap}_2(B_{r_m}(x_m)\cap K_2,B_{r_m}(x_m))>0.
\]
Due to Lemma \ref{lem:ell_no_interior_components}, we choose a continuous curve $\gamma\subset H_1$ joining a point $y\in U_0$ to $x$. Cover $\gamma$ by finitely many overlapping balls
\[
B_1,\dots,B_N
\]
with the following properties:
\begin{enumerate}
  \item $B_1\Subset U_0$;
  \item $B_k\Subset H_1$ for $k=2,\dots,N$;
  \item $B_k\cap B_{k+1}\neq\varnothing$ for $k=1,\dots,N-1$;
  \item $B_N=B_{r_m}(x_m)$.
\end{enumerate}

Let $M\in \{2,\cdots,N\}$ be the first index such that
\[
\operatorname{cap}_2(K_2 \cap B_M,B_M)>0.
\]
By minimality, for each $k<M$ we have $\operatorname{cap}_2(B_k\cap K_2,B_k)=0$. For any $\phi\in C_0^{\infty}(B_k)$,
since $\operatorname{cap}_2(K_2\cap B_k,B_k)=0$, we can conclude that the quasi-continuous representative $\widetilde\phi=0$ q.e. on $K_2$.
Therefore, $\phi\in H_0^1(H_2)$ by virtue of \eqref{eq:characterization}. Then by definition, we have
\begin{equation}
    \int_{B_k}\phi \Delta v_2 dx = \int_{B_k} \nabla v_2 \nabla \phi = 0.
\end{equation}
Since $\phi\in C_0^\infty(B_k)$ is arbitrary,
it follows that $v_2$ is harmonic in $B_k$.
The function $v_1$ is also harmonic there because $B_k\Subset H_1$. Starting from \eqref{eq:linearized_common_collar} in $B_1$, unique continuation across the nonempty overlaps $B_k\cap B_{k+1}$ gives
\begin{equation}
  \label{eq:linearized_equal_before_M}
  v_1=v_2\qquad\text{in }\bigcup_{k=1}^{M-1} B_k.
\end{equation}

Set
$U:=B_M\cap H_2$,
and let $C$ be the connected component of $U$ that meets $B_{M-1}\cap B_M$. By \eqref{eq:linearized_equal_before_M}, the functions $v_1$ and $v_2$ agree on a nonempty open subset of $C$. Since both are harmonic in $C$, unique continuation implies
\begin{equation}
  \label{eq:linearized_equal_in_C}
  v_1=v_2\qquad\text{in }C.
\end{equation}

By Proposition \ref{prop:ell_mixed_positive}, the function $v_1$ is continuous and strictly positive in $B_M\Subset H_1$. Thus there exists a constant $c_0$ such that
\begin{equation}
  \label{eq:v1_positive_lower_bound}
  v_1\ge c_0>0\qquad\text{on }\overline{B_M}.
\end{equation}
Recall that $v_2 = E_g h + w$ for some $w\in H^1_0(H_2)$ and thus $\widetilde{v_2} = \widetilde{E_g h} + \widetilde{w}$. Since $E_g h = 0$ in a neighborhood of $K_2$, we have $\widetilde{E_g h} = 0$ on $K_2$. By definition, there holds $\widetilde{w} = 0$ q.e. on $K_2$.
Therefore the quasi-continuous representative $\widetilde v_2$ satisfies
\begin{equation}
  \label{eq:v2_zero_qe_on_K2}
  \widetilde v_2=0\qquad\text{q.e. on }K_2\cap B_M.
\end{equation}
Moreover, on $C$ the function $v_2$ is smooth and coincides with $v_1$ by \eqref{eq:linearized_equal_in_C}. Hence there holds
\begin{equation}
    v_2\geq c_0 >0 \text{ on }C \cap B_M.
\end{equation}

Next, we separate our discussion into two cases:

(\romannumeral1) $B_M \setminus \overline{C}\neq \emptyset$.
Since $\p C\cap B_M \subset \p H_2\setminus \p\Omega$ is a part of the free boundary, $\overline{C}$ is of finite perimeter in $B_M$ according to \cite[Corollary 3]{caffarelli1981remark}. Thus we may consider the reduced boundary $\p^\ast C\cap B_M$. Let us show that $\Cap_2(\p^\ast C\cap B_M, B_M) > 0$. Since $B_M\setminus \overline{C}$ is open, then $\abs{B_M\setminus \overline{C}}>0$. According to \cite[Theorem 5.4.3]{Ziemer89}, there holds 
\begin{equation}
    0<\min\left(\abs{\overline{C}\cap B_M}, \abs{B_M\setminus \overline{C}}\right)^{\frac{n-1}{n}}\lesssim P(\overline{C}; B_M).
\end{equation}
Here $P(\overline{C}; B_M)$ stands for the total variation of $\chi_{\overline{C}}$ in $B_M$, see e.g. \cite[Definition 5.4.1]{Ziemer89}.
In the view of De Giorgi’s structure theorem (see e.g. \cite[Theorem 15.9]{Maggi12}), we have $\cH^{n-1}(\p^\ast C\cap B_M)>0$. Notice that this implies that $\Cap_2(\p^\ast C\cap B_M, B_M)>0$ by \cite[Theorem 2.6.16]{Ziemer89}.
Next, we show that $\widetilde{v_2} >0$ on q.e. $\p^\ast C \cap B_M$.
Let us consider the set of points of density $t$ of $C$,
\begin{equation}
    C^{(t)}=\left\{x\in \RR^n\mid \lim_{\varepsilon\to0^+}\frac{\abs{C\cap B(x,\varepsilon)}}{\abs{B(x,\varepsilon)}}=t \right\}.
\end{equation}
According to Federer's theorem (see e.g. \cite[Theorem 16.2]{Maggi12}), we have $\p^\ast C\cap B_M \subset C^{(1/2)}\cap B_M$. Therefore, for q.e. $x\in \p^\ast C\cap B_M$, there holds
\begin{equation}
\label{eq:nonzero_wv_1}
    \wv_2(x) = \lim_{r\to 0}\frac{\int_{B(x,r)}v_2(y)\ dy }{\abs{B(x,r)}}\geq \lim_{r\to 0}\frac{\int_{B(x,r)\cap C} v_2(y)\ dy }{\abs{B(x,r)}}\geq \lim_{r\to 0}\frac{c\abs{B(x,r)\cap C}}{\abs{B(x,r)}}  = \frac{c}{2}.
\end{equation}
Finally, in view of Lemma \ref{lm:bd_connected_component}, $\p C\cap B_M\subset \p K_2$ and thus $\p^\ast C\cap B_M\subset \p C\cap B_M\subset K_2\cap B_M$. Hence \eqref{eq:nonzero_wv_1} leads a contradiction to $\wv_2 = 0$ q.e. on $K_2\cap B_M$ and \eqref{eq:cap_K2_H1_zero} is proved in this case.

(\romannumeral2)$B_M\setminus \overline{C} = \emptyset$. 
We claim that
\begin{equation}
  \label{eq:positive_capacity_F}
  \operatorname{cap}_2(\p C\cap B_M,B_M)>0.
\end{equation}
Indeed, to get a contradiction, assume that $\operatorname{cap}_2(\p C\cap B_M,B_M)=0$. By Lemma \ref{lem:ell_cap_zero_no_disconnect}, the set $B_M\setminus \p C$ is connected. On the other hand, if $G:=B_M\setminus\overline C$ were nonempty, then $C$ and $G$ would be disjoint nonempty open subsets of $B_M\setminus \p C$, contradicting connectedness. Thus $G=\varnothing$, hence $C=B_M \setminus \p C$. 
It follows that
\[
K_2\cap B_M\subset B_M\setminus C=\p C\cap B_M,
\]
which contradicts the choice of $B_M$. This proves \eqref{eq:positive_capacity_F}.
Recall that $C$ is a connected component of $B_M \cap H_2$ by definition, then we have $C = H_2\cap B_M$ and thus $\p C = \p H_2\cap B_M$.
d
According to Lemma \ref{lm:free_db_density}, for any $x\in \p C\cap B_M \subset \p H_2$, there exists a constant $\delta>0$ such that
\begin{equation}
    \liminf_{r\to 0}\frac{\abs{C\cap B(x,r)}}{\abs{B(x,r)}} = \liminf_{r\to 0}\frac{\abs{H_2\cap B(x,r)}}{\abs{B(x,r)}}\geq \delta.
\end{equation}
Thus for q.e. $x\in \p C\cap B_M$, there holds
\begin{align}
    \wv_2(x) &= \lim_{r\to 0}\frac{\int_{B(x,r)}v_2(y)\ dy }{\abs{B(x,r)}}\geq \liminf_{r\to 0}\frac{\int_{B(x,r)\cap C} v_2(y)\ dy }{\abs{B(x,r)}}\\
    &\geq \liminf_{r\to 0}\frac{c_0\abs{B(x,r)\cap C}}{\abs{B(x,r)}}  \geq \delta c_0 >0.
\end{align}
However, since $\Cap_2(\p C\cap B_M, B_M)>0$,
this leads a contradiction to $\wv = 0$ q.e. on $K_2\cap B_M$. Consequently, \eqref{eq:cap_K2_H1_zero} is proved in this case.

Interchanging the roles of $(1,2)$ and $(2,1)$ yields
\[
\operatorname{cap}_2(K_1\cap H_2,\Omega)=0.
\]
Since
\[
H_1\triangle H_2=(H_1\cap K_2)\cup(H_2\cap K_1),
\]
subadditivity of capacity shows that $\operatorname{cap}_2(H_1\triangle H_2,\Omega)=0$, as claimed.
\end{proof}

\subsection{Determining the obstacle}
\label{sec:integration}
We now show that the geometric uniqueness result of Theorem \ref{thm:ell_linearized_set_uniqueness} leads to pointwise uniqueness of the obstacle.

Fix once and for all a boundary function $f\in C^\infty(\p\Omega)$ with $f>0$ on $\p\Omega$. For a normalized obstacle $\psi$, let
\[
U_\psi(\lambda):=u^{\lambda f}_\psi,
\qquad \lambda\ge 0,
\]
where $u^{\lambda f}_\psi$ solves \eqref{def-obstacle-intro} with Dirichlet boundary condition $\lambda f$ and obstacle $\psi$,
and for $\lambda>0$ write
\[
H_{\psi,\lambda}:=\{U_\psi(\lambda)>\psi\},
\qquad
K_{\psi,\lambda}:=\Omega\setminus H_{\psi,\lambda}.
\]
We also denote by
\[
V_\psi(\lambda):=v_{\lambda f,f}
\]
the solution of the rough Dirichlet problem on the non-contact set given by Theorem \ref{thm:ell_left_linearization}. Thus $U_\psi(\lambda)$ is the nonlinear obstacle solution for boundary value $\lambda f$, $H_{\psi,\lambda}$ is its non-contact set, and $V_\psi(\lambda)$ is the corresponding left linearized solution in the direction $f$.

\begin{proposition}
  \label{prop:ell_uniform_lambda_H1}
  Let $I=[a,b]\subset (0,\infty)$. Then there exists $C_I>0$ such that for every $\lambda\in I$ and every $\varepsilon<0$ with $\lambda+\varepsilon>0$,
  \begin{equation}
  \label{eq:ell_uniform_lambda_H1}
      \|U_\psi(\lambda+\varepsilon)-U_\psi(\lambda)\|_{H^1(\Omega)}
  \le C_I |\varepsilon|.
  \end{equation}
\end{proposition}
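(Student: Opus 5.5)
We redo the proof of Lemma \ref{lm:ell_H1_global_Reps} with $g=\lambda f$, $h=f$, tracking the dependence of every constant on $\lambda\in I$. Write $R:=U_\psi(\lambda+\varepsilon)-U_\psi(\lambda)$ with $\varepsilon<0$ and $\lambda+\varepsilon>0$.

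\emph{Sup bound.} As in \eqref{eq:ell_Reps_PDE}, Proposition \ref{prop:ell_comparison} gives $R\le 0$ and $\Delta R=-(\chi_{H_{\psi,\lambda+\varepsilon}}-\chi_{H_{\psi,\lambda}})\Delta\psi\le 0$ a.e. Comparing with the harmonic extension of $\varepsilon f$ yields
\[
\varepsilon\sup_{\p\Omega}f\le R\le 0 .
\]
This bound involves no constant depending on $\lambda$ at all.

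\emph{Global energy bound.} Rather than splitting into an interior Caccioppoli estimate plus a collar (whose width would depend on $\lambda$ and $\varepsilon$), I would estimate $\nabla R$ globally. Let $G$ be the harmonic extension of $f$, so that $R-\varepsilon G\in H_0^1(\Omega)$. Testing the equation for $R$ against $R-\varepsilon G$ gives
\[
\int_\Omega|\nabla R|^2=\varepsilon\int_\Omega\nabla R\cdot\nabla G-\int_\Omega (R-\varepsilon G)\,\Delta R .
\]
On the right-hand side, $|\Delta R|\le \|\Delta\psi\|_{L^\infty}$ and $|R-\varepsilon G|\le 2|\varepsilon|\sup f$ by the sup bound and the maximum principle for $G$. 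Hence
\[
\|\nabla R\|_{L^2}^2\le|\varepsilon|\,\|\nabla R\|_{L^2}\|\nabla G\|_{L^2}+C|\varepsilon| .
\]
This gives only $\|\nabla R\|_{L^2}\lesssim|\varepsilon|^{1/2}$. So the direct estimate needs sharpening, and this is the main obstacle.

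\emph{Sharpening via the measure of the change in contact set.} The fix is to bound the measure of $D:=H_{\psi,\lambda}\setminus H_{\psi,\lambda+\varepsilon}$, which is where $\Delta R\neq0$. I would try to show $|D|\lesssim|\varepsilon|$ uniformly in $\lambda\in I$. On $D$ the solution satisfies $U_\psi(\lambda)-\psi=-R\le C|\varepsilon|$. The nondegeneracy estimate $\sup_{B_r(x)}(u-\psi)\ge c r^2$ at free boundary points, with $c\sim\inf|\Delta\psi|$ on compact sets, together with the uniform $C^{1,1}$ bound on $U_\psi(\lambda)$, places $D$ in a thin neighbourhood of $\p H_{\psi,\lambda}$. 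The uniform perimeter bounds for the free boundary \cite{caffarelli1981remark} should then give $|D|\lesssim|\varepsilon|$. Granting this, the last term improves to
\[
\Bigl|\int_\Omega (R-\varepsilon G)\,\Delta R\Bigr|\le C|\varepsilon|\cdot|D|\lesssim|\varepsilon|^2 ,
\]
and then $\|\nabla R\|_{L^2}\le C_I|\varepsilon|$.

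\emph{Uniformity in $\lambda$.} All constants must be controlled for $\lambda\in I$. By \eqref{eq:ell_c1alpha_estimate} the $C^{1,\alpha}$ norms of $U_\psi(\lambda)$ are bounded uniformly for $\lambda\in[0,b]$. The contact sets satisfy $K_{\psi,\lambda}\subset K_{\psi,a/2}$ once $|\varepsilon|\le a/2$, by Proposition \ref{prop:ell_comparison}. Moreover $K_{\psi,a/2}\Subset\Omega$, since $a f/2>0=\psi$ on $\p\Omega$. So all relevant sets lie in a fixed compact set where $\Delta\psi\le-\mu<0$. This supplies the uniform nondegeneracy and perimeter constants over $I$.

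For $|\varepsilon|\ge a/2$, the estimate is trivial: \eqref{eq:ell_c1alpha_estimate} bounds $\|R\|_{H^1}$ by a constant $C\le (2C/a)|\varepsilon|$. Combining with the $L^2$ bound from the sup estimate completes \eqref{eq:ell_uniform_lambda_H1}.
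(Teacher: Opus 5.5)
There is a genuine gap in the sharpening step. The only information you use about $D=H_{\psi,\lambda}\setminus H_{\psi,\lambda+\varepsilon}$ is that $0<U_\psi(\lambda)-\psi\le C|\varepsilon|$ on $D$, and this cannot give $|D|\lesssim|\varepsilon|$. Near a regular free boundary point $x_0$ one has $U_\psi(\lambda)-\psi\approx\tfrac12|\Delta\psi(x_0)|\operatorname{dist}(x,K_{\psi,\lambda})^2$. So the band $\{0<U_\psi(\lambda)-\psi\le C|\varepsilon|\}$ has width, and measure, of order $|\varepsilon|^{1/2}$; the radial example $\Omega=B_1$, $\psi=1-|x|^2$, $f\equiv1$ shows this explicitly. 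Accordingly, nondegeneracy plus the $C^{1,1}$ bound can at best put $D$ in an $O(|\varepsilon|^{1/2})$-neighbourhood of $\p H_{\psi,\lambda}$. The perimeter bound of \cite{caffarelli1981remark} then yields only $|D|\lesssim|\varepsilon|^{1/2}$, which in your energy identity gives $\|\nabla R\|_{L^2}\lesssim|\varepsilon|^{3/4}$, not $|\varepsilon|$. (Nondegeneracy also only bounds $\sup_{B_r}$ from below, so it does not confine sublevel sets to the free boundary at all: new contact components can nucleate near interior near-minima of $U_\psi(\lambda)-\psi$.) That $|D|$ is nevertheless $O(|\varepsilon|)$ reflects the finite speed of the free boundary, which sublevel-set information cannot see.

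The fix is already in your last paragraph, and it also removes your stated reason for avoiding the collar route. For $\lambda\in I$ and $\lambda+\varepsilon\ge a/2$, comparison gives $H_{\psi,a/2}\subset H_{\psi,\lambda+\varepsilon}\subset H_{\psi,\lambda}$. So one fixed collar $\Omega'\subset H_{\psi,a/2}$ with $\p\Omega\subset\p\Omega'$ works for all such $\lambda,\varepsilon$, and $R$ is harmonic in $\Omega'$. The argument of Lemma \ref{lm:ell_H1_global_Reps} then runs with uniform constants, in two pieces:
\begin{enumerate}
\item In the interior, the Caccioppoli estimate $\int\zeta^2|\nabla R|^2\le 4\int R^2|\nabla\zeta|^2$ holds for the nonpositive superharmonic $R$.
\item Near $\p\Omega$, the same estimate holds for the harmonic function $w:=R-\varepsilon G$, which vanishes on $\p\Omega$. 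Test with $\eta^2 w$, where $\eta\equiv1$ near $\p\Omega$ and $\eta=0$ near $\p\Omega'\setminus\p\Omega$.
\end{enumerate}
If you prefer to keep your energy identity, the same collar gives the missing bound by a flux argument. We have $w\ge0$, $-\Delta w=|\Delta\psi|\chi_D$, and $w$ is harmonic in $\Omega'$ with $w|_{\p\Omega}=0$ and $0\le w\le|\varepsilon|\sup f$. Boundary gradient estimates in $\Omega'$ then give $|\p_\nu w|\le C|\varepsilon|$ on $\p\Omega$, hence
\[
\int_D|\Delta\psi|\,dx=-\int_{\p\Omega}\p_\nu w\,d\sigma\le C|\varepsilon|.
\]
It follows that $\int_\Omega (R-\varepsilon G)(-\Delta R)\,dx\le |\varepsilon|\sup f\cdot C|\varepsilon|$, which is exactly the $O(\varepsilon^2)$ you needed. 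You also get $|D|\lesssim|\varepsilon|$, since $D\subset K_{\psi,a/2}$, where $\Delta\psi\le-\mu<0$. For comparison, the paper's own proof is a one-line appeal to Lemma \ref{lm:ell_H1_global_Reps} with $C_I=\max_{\lambda\in I}C_\lambda$. The fixed collar above is precisely what makes that maximum finite, so your concern about uniformity was justified, even though the route you chose to handle it does not close.
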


\begin{proof}
In view of Lemma \ref{lm:ell_H1_global_Reps}, for each $\lambda\in [a,b]$, there exists a constant $C_\lambda$ such that 
\begin{equation}
    \norm{U_\psi(\lambda+\varepsilon)-U_\psi(\lambda)}_{H^1(\Omega)} \leq C_\lambda \abs{\varepsilon}
\end{equation}
for every $\varepsilon<0$ such that $\lambda + \varepsilon >0$. Hence \eqref{eq:ell_uniform_lambda_H1} follows immediatelt by setting $C_I = \max_{\lambda\in I}C_\lambda$.
\end{proof}

Apply Proposition \ref{prop:ell_uniform_lambda_H1}, we can immediately obtain the following corollary.

\begin{corollary}
  \label{cor:ell_local_lipschitz_lambda}
  For every normalized obstacle $\psi$, the map
  \[
  (0,\infty)\ni \lambda\mapsto U_\psi(\lambda)\in H^1(\Omega)
  \]
  is locally Lipschitz.
\end{corollary}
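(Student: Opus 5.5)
The corollary asserts that $\lambda\mapsto U_\psi(\lambda)$ is locally Lipschitz into $H^1(\Omega)$. I would deduce this from Proposition \ref{prop:ell_uniform_lambda_H1}, which gives a one-sided difference bound, by a symmetrization argument.

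Fix a compact interval $J=[a,b]\subset(0,\infty)$ and let $C_J$ be the constant from Proposition \ref{prop:ell_uniform_lambda_H1} for $I=J$. Take $\lambda_1<\lambda_2$ in $J$. Set $\lambda:=\lambda_2\in J$ and $\varepsilon:=\lambda_1-\lambda_2<0$. Then $\lambda+\varepsilon=\lambda_1>0$, so \eqref{eq:ell_uniform_lambda_H1} applies and gives
\[
\|U_\psi(\lambda_1)-U_\psi(\lambda_2)\|_{H^1(\Omega)}\le C_J|\lambda_1-\lambda_2|.
\]
By symmetry of the norm, this estimate holds for all pairs $\lambda_1,\lambda_2\in J$. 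Hence the map is Lipschitz on $J$ with constant $C_J$. Every point of $(0,\infty)$ has a compact neighbourhood of this form, so the map is locally Lipschitz.

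The only delicate point is whether the constant $C_I$ in Proposition \ref{prop:ell_uniform_lambda_H1} is genuinely uniform in $\lambda$. Taking $\max_\lambda C_\lambda$ over a continuum is not justified unless $C_\lambda$ is bounded on $I$, and I would check this first. To do so, I would retrace the constants in Lemma \ref{lm:ell_H1_global_Reps}; they come from three sources.

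\begin{enumerate}
\item The sup bound \eqref{eq:ell_Reps_sup} is $|R_\varepsilon|\le|\varepsilon|\sup f$, which does not depend on $\lambda$.
\item The Caccioppoli constants depend only on the chosen subdomain $\omega$.
\item The collar $\Omega'$ must avoid $K_{\psi,\lambda}$ for all $\lambda\in I$.
\end{enumerate}

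For the third point, the comparison principle (Proposition \ref{prop:ell_comparison}) gives $K_{\psi,\lambda}\subset K_{\psi,a}$ for $\lambda\ge a$. A single collar avoiding $K_{\psi,a}$ therefore works for every $\lambda\in I$. One also needs $U_\psi(\lambda+\varepsilon)>\psi$ on $\overline{\Omega'}$ uniformly; this follows because $\lambda f>0=\psi$ on $\partial\Omega$ and because of the uniform $C^{1,\alpha}$ estimate \eqref{eq:ell_c1alpha_estimate}. Alternatively, $|\varepsilon|$ can be restricted to a uniform small range, and larger gaps handled by chaining finitely many steps via the triangle inequality.

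With this uniformity in hand, the Lipschitz estimate above goes through directly.
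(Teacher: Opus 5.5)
Your argument is correct and follows the paper's route: the paper obtains the corollary immediately from Proposition \ref{prop:ell_uniform_lambda_H1}, which is exactly what your choice $\lambda=\lambda_2$, $\varepsilon=\lambda_1-\lambda_2$ does. Your concern about uniformity is also justified, since the paper simply sets $C_I=\max_{\lambda\in I}C_\lambda$ without showing that $C_\lambda$ is bounded, and your repair works: the sup bound and the Caccioppoli bounds do not depend on $\lambda$, and here $\lambda+\varepsilon=\lambda_1\ge a$, so comparison gives $K_{\psi,\lambda_1}\cup K_{\psi,\lambda_2}\subset K_{\psi,a}$ and a single collar avoiding $K_{\psi,a}$ serves both states without any appeal to the $C^{1,\alpha}$ estimate.
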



\begin{lemma}
  \label{lem:ell_equal_linearized_velocities}
  Let $\psi_1,\psi_2\in C^{2,1}(\overline\Omega)$ satisfy
  \[
  \psi_1|_{\p\Omega}=\psi_2|_{\p\Omega}=0,
  \qquad
  \Delta\psi_1<0,\ \Delta\psi_2<0 \quad\text{in }\Omega,
  \]
  and suppose that
  \[
  \Theta_{\psi_1}(g)=\Theta_{\psi_2}(g)
  \]
  for every $g\in C^\infty(\p\Omega)$ with $g>0$ on $\p\Omega$. Then for every $\lambda>0$,
  \[
  V_{\psi_1}(\lambda)=V_{\psi_2}(\lambda)
  \qquad\text{in } H^1(\Omega).
  \]
\end{lemma}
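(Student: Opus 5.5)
Since $\Theta_{\psi_1}=\Theta_{\psi_2}$ on all positive boundary data, the left linearizations of the two Dirichlet-to-Neumann maps at $g=\lambda f$ in the direction $h=f$ coincide. Indeed, for $\varepsilon<0$ small, $\lambda f+\varepsilon f>0$, so the difference quotients $\varepsilon^{-1}(\Theta_{\psi_i}(\lambda f+\varepsilon f)-\Theta_{\psi_i}(\lambda f))$ are identical for $i=1,2$. Theorem \ref{thm:ell_left_linearization} (with $g=\lambda f$, $h=f$, and $\Lambda_\psi$ standing for $\Theta_\psi$) shows that these quotients converge in $H^{-1/2}(\p\Omega)$ to $\p_\nu V_{\psi_i}(\lambda)|_{\p\Omega}$. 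Hence $D^-\Lambda_{\psi_1}(\lambda f)f=D^-\Lambda_{\psi_2}(\lambda f)f$, and Theorem \ref{thm:ell_linearized_set_uniqueness} gives $\Cap_2(H_1\triangle H_2,\Omega)=0$, where $H_i=H_{\psi_i,\lambda}$ and $K_i=K_{\psi_i,\lambda}$.

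Next I will show that the function spaces coincide: $H_0^1(H_1)=H_0^1(H_2)$. By \eqref{eq:characterization}, $w\in H_0^1(H_1)$ means $w\in H_0^1(\Omega)$ with $\widetilde w=0$ q.e. on $K_1$. Since $K_2\subset K_1\cup (K_2\cap H_1)$ and $K_2\cap H_1\subset H_1\triangle H_2$ has capacity zero, it follows that $\widetilde w=0$ q.e. on $K_2$, so $w\in H_0^1(H_2)$. The reverse inclusion is symmetric. For the lift, I will choose a single $E\in C^\infty(\overline\Omega)$ with $E|_{\p\Omega}=f$ that vanishes in a neighborhood of $K_1\cup K_2$. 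This is possible because both contact sets are compactly contained in $\Omega$: each $H_i$ contains a collar of $\p\Omega$ since $\lambda f>0=\psi_i$ on $\p\Omega$. Then $\mathcal V_{\lambda f}(f)=E+H_0^1(H_1)=E+H_0^1(H_2)$ is the same affine space for both obstacles.

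Finally, $V_{\psi_1}(\lambda)$ and $V_{\psi_2}(\lambda)$ both lie in this common affine space and both satisfy \eqref{eq:ell_weak_mixed} against the common test space $H_0^1(H_1)=H_0^1(H_2)$. The uniqueness part of Theorem \ref{thm:ell_left_linearization}, namely testing the difference $w=V_{\psi_1}(\lambda)-V_{\psi_2}(\lambda)\in H_0^1(H_1)$ against itself to get $\int_\Omega|\nabla w|^2\,dx=0$ with $w\in H^1_0(\Omega)$, gives $V_{\psi_1}(\lambda)=V_{\psi_2}(\lambda)$ in $H^1(\Omega)$.

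The main delicate point is the identification of the two $H_0^1$ spaces from the capacity-zero symmetric difference. It relies on the quasi-everywhere characterization \eqref{eq:characterization} and on the fact that the exceptional sets can be enlarged by a capacity-zero set, which holds by subadditivity of capacity. Once the spaces coincide, the conclusion is immediate.
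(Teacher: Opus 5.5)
Your proposal is correct and follows essentially the same route as the paper. Equality of the Dirichlet-to-Neumann maps gives equal left derivatives at $\lambda f$ in the direction $f$; Theorem~\ref{thm:ell_linearized_set_uniqueness} then gives a capacity-zero symmetric difference, hence $H_0^1(H_{\psi_1,\lambda})=H_0^1(H_{\psi_2,\lambda})$ and a common lift, and uniqueness of the rough Dirichlet problem concludes — you simply spell out, via the q.e.\ characterization and subadditivity, the identification of the two $H_0^1$ spaces that the paper states without detail.
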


\begin{proof}
Fix $\lambda>0$. For each $i$, the function $V_{\psi_i}(\lambda)$ is the solution of the rough Dirichlet problem on the set $H_{\psi_i,\lambda}$ associated with the nonlinear state $U_{\psi_i}(\lambda)$. Equality of the nonlinear Dirichlet-to-Neumann maps implies equality of their left derivatives at $\lambda f$ in the direction $f$. By Theorem \ref{thm:ell_linearized_set_uniqueness},
\[
\operatorname{cap}_2(H_{\psi_1,\lambda}\triangle H_{\psi_2,\lambda},\Omega)=0.
\]
Hence
\[
H_0^1(H_{\psi_1,\lambda})=H_0^1(H_{\psi_2,\lambda}).
\]
Since $\lambda f>0$ on $\p\Omega$ and both obstacles vanish there, the sets $H_{\psi_1,\lambda}$ and $H_{\psi_2,\lambda}$ contain collar neighborhoods of $\p\Omega$. Choose $\chi\in C^\infty(\overline\Omega)$ such that
\[
0\le \chi\le 1,
\qquad
\chi=1 \text{ near } \p\Omega,
\qquad
\supp \chi
\subset H_{\psi_1,\lambda}\cap H_{\psi_2,\lambda}.
\]
Let $Ef\in C^\infty(\overline\Omega)$ be any smooth extension of $f$ and set $L:=\chi Ef$. Then both $V_{\psi_1}(\lambda)$ and $V_{\psi_2}(\lambda)$ belong to the same affine space
\[
L+H_0^1(H_{\psi_1,\lambda})=L+H_0^1(H_{\psi_2,\lambda}),
\]
and both satisfy the same weak equation
\[
\int_\Omega \nabla v\cdot\nabla \phi\,dx=0
\qquad \forall \phi\in H_0^1(H_{\psi_1,\lambda})=H_0^1(H_{\psi_2,\lambda}).
\]
By uniqueness of the rough Dirichlet problem on the non-contact set, the two solutions coincide.
\end{proof}

\begin{lemma}
  \label{lem:ell_banach_constancy}
  Let $X$ be a Banach space and let $W:(0,\infty)\to X$ be locally Lipschitz. Assume that for every $\lambda>0$ the left derivative $D^-W(\lambda)$ exists in the weak sense and satisfies
  \[
  D^-W(\lambda)=0.
  \]
  Then $W$ is constant on $(0,\infty)$.
\end{lemma}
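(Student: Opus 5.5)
The plan is to reduce to the scalar case by testing against functionals, and then run a Dini-derivative argument for real-valued Lipschitz functions.

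\emph{Reduction.} Fix $\ell\in X^*$ and set $w(\lambda):=\ell(W(\lambda))$. Then $w:(0,\infty)\to\RR$ is locally Lipschitz. Reading "left derivative exists in the weak sense and equals zero" as
\[
\lim_{\varepsilon\uparrow 0}\ell\Bigl(\frac{W(\lambda+\varepsilon)-W(\lambda)}{\varepsilon}\Bigr)=0\quad\text{for all }\ell\in X^*,
\]
we get $w'_-(\lambda)=0$ for every $\lambda>0$. If we show that each such $w$ is constant, then $\ell(W(\lambda)-W(\mu))=0$ for all $\ell$. The Hahn--Banach theorem then gives $W(\lambda)=W(\mu)$.

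\emph{Scalar step.} Take a locally Lipschitz $w$ on $(0,\infty)$ with vanishing left derivative everywhere. Since $w$ is Lipschitz on each compact interval $[a,b]\subset(0,\infty)$, it is absolutely continuous there. By Rademacher (or Lebesgue's theorem), $w$ is differentiable a.e. At every point of differentiability, $w'(\lambda)$ equals the left derivative, which is $0$. Absolute continuity then gives
\[
w(b)-w(a)=\int_a^b w'(s)\,ds=0.
\]
As $a<b$ are arbitrary, $w$ is constant.

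An alternative that avoids measure theory is the one-sided mean value argument. Suppose $w(b)-w(a)>\eta(b-a)$ for some $\eta>0$. Consider $\phi(\lambda):=w(\lambda)-w(a)-\eta(\lambda-a)$, which satisfies $\phi(a)=0<\phi(b)$. Let $\lambda_0:=\inf\{\lambda\in[a,b]:\phi(\lambda)\ge\phi(b)\}$. By continuity $\lambda_0>a$ and $\phi<\phi(b)=\phi(\lambda_0)$ on $[a,\lambda_0)$. This forces the left derivative of $\phi$ at $\lambda_0$ to be $\ge 0$, whereas it equals $-\eta<0$, a contradiction. The case $w(b)-w(a)<-\eta(b-a)$ is symmetric, using $\sup$ of $\phi$.

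\emph{Main obstacle.} The only delicate point is that only a one-sided derivative is assumed, and only weakly. The Lipschitz hypothesis handles both issues: it gives absolute continuity, or alternatively the continuity needed for the one-sided mean value argument. The scalarization handles the weak sense, so no vector-valued Radon--Nikodym property of $X$ is needed.
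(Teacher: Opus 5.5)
Your proposal is correct and matches the paper's proof: you scalarize with $\ell\in X^*$, use that the locally Lipschitz function $w_\ell=\ell\circ W$ is absolutely continuous and a.e.\ differentiable with derivative equal to its vanishing left derivative, and conclude because $X^*$ separates points. Your alternative one-sided mean-value argument is also valid and shows that continuity of $w_\ell$ alone would suffice, but the paper does not need it.
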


\begin{proof}
Fix $\ell\in X^*$ and define
\[
w_\ell(\lambda):=\ell(W(\lambda)).
\]
Then $w_\ell$ is a locally Lipschitz scalar function on $(0,\infty)$, so it is absolutely continuous on compact intervals and differentiable almost everywhere.
At every point where $w_\ell$ is differentiable, its classical derivative agrees with its left derivative. That is,
\begin{equation}
    w_\ell^\prime(\lambda) = D^-w_\ell(\lambda)=\ell(D^-W(\lambda))=0, \text{ a.e. }\lambda\in (0,\infty).
\end{equation}
Hence $w_\ell$ is constant on $(0,\infty)$.
Since this holds for every $\ell\in X^*$ and the dual separates points, $W$ itself is constant.
\end{proof}


\begin{proof}[Proof of Theorem \ref{thm:ellptic_obstacle}]

For $i=1,2$, define
\[
U_i(\lambda):=U_{\tphi_i}(\lambda)=u^{\lambda f}_{\tphi_i},
\qquad
V_i(\lambda):=V_{\tphi_i}(\lambda),
\qquad
\lambda>0.
\]
Here $U_i(\lambda)$ denotes the nonlinear obstacle solution for the obstacle $\tphi_i$ and boundary value $\lambda f$, while $V_i(\lambda)$ is its left linearization with respect to the amplitude parameter.
By Corollary \ref{cor:ell_local_lipschitz_lambda}, the map
\[
W(\lambda):=U_1(\lambda)-U_2(\lambda)
\]
is locally Lipschitz from $(0,\infty)$ to $H^1(\Omega)$. By Theorem \ref{thm:ell_left_linearization}, the left weak $H^1$-derivative of $U_i$ at $\lambda$ is $V_i(\lambda)$. Hence the left derivative of $W$ is
\[
D^-W(\lambda)=V_1(\lambda)-V_2(\lambda).
\]
Lemma \ref{lem:ell_equal_linearized_velocities} shows that this vanishes for every $\lambda>0$. Therefore Lemma \ref{lem:ell_banach_constancy} implies that $W$ is constant on $(0,\infty)$.

Let $E_f$ be the harmonic extension of $f$ to $\Omega$. Since $f>0$ on $\p\Omega$, the strong maximum principle gives $E_f>0$ in $\Omega$. For $\lambda$ sufficiently large,
\[
\lambda E_f>\tphi_i \qquad \text{in }\Omega,\quad i=1,2,
\]
so the obstacle is inactive and
\[
U_i(\lambda)=\lambda E_f.
\]
Hence $W(\lambda)=0$ for all sufficiently large $\lambda$. Since $W$ is constant, it follows that
\[
U_1(\lambda)=U_2(\lambda)
\qquad \text{for every } \lambda>0.
\]
Finally, we claim that
\[
U_i(\lambda)\to \tphi_i
\qquad\text{in } C^1(\overline\Omega)
\]
as $\lambda\downarrow 0$. Indeed, the estimate \eqref{eq:ell_c1alpha_estimate} gives a uniform $C^{1,\alpha}(\overline\Omega)$ bound for $U_i(\lambda)$ on $\lambda\in(0,1]$. Hence every sequence $\lambda_j\downarrow 0$ has a subsequence, not relabeled, such that
\[
U_i(\lambda_j)\to \widetilde u_i
\qquad\text{in } C^1(\overline\Omega)
\]
for some $\widetilde u_i\in C^1(\overline\Omega)$. In particular, the convergence is also strong in $H^1(\Omega)$. Since $U_i(\lambda_j)\ge \tphi_i$ in $\Omega$ and the boundary data satisfy $U_i(\lambda_j)|_{\p\Omega}=\lambda_j f\to 0$, we obtain
\[
\widetilde u_i\ge \tphi_i,
\qquad
\widetilde u_i|_{\p\Omega}=0.
\]
Thus $\widetilde u_i$ belongs to the admissible class for the obstacle problem with obstacle $\tphi_i$ and zero boundary data.

We now verify the corresponding variational inequality. Let $v\in H^1(\Omega)$ satisfy $v\ge \tphi_i$ a.e. in $\Omega$ and $v|_{\p\Omega}=0$. Set
\[
v_j=v+\lambda_j E_f.
\]
In particular, $v_j\ge \tphi_i$ in $\Omega$ and
\[
v_j|_{\p\Omega}=\lambda_j f,
\]
so $v_j\in \mathcal K_{\tphi_i}^{\lambda_j f}$. Also, $v_j\to v$ in $H^1(\Omega)$ as $j\to\infty$. Applying \eqref{def-variational-intro} to $U_i(\lambda_j)$ with the test function $ v = v_j$ gives
\[
\int_\Omega \nabla U_i(\lambda_j)\cdot \nabla\bigl(v_j-U_i(\lambda_j)\bigr)\,dx\ge 0.
\]
Passing to the limit, using the strong $H^1(\Omega)$ convergence $U_i(\lambda_j)\to \widetilde u_i$ and $v_j\to v$, we obtain
\[
\int_\Omega \nabla \widetilde u_i\cdot \nabla\bigl(v-\widetilde u_i\bigr)\,dx\ge 0.
\]
Since this holds for every admissible $v$ with zero boundary data, $\widetilde u_i$ solves the obstacle problem with obstacle $\tphi_i$ and boundary value $0$.

We are now looking at the obstacle problem with obstacle $\tphi_i$ and zero outer boundary data; the claim is that its unique solution is precisely $\tphi_i$ itself.

It remains to identify this solution. The function $\tphi_i$ itself is admissible, because $\tphi_i\ge \tphi_i$ and $\tphi_i|_{\p\Omega}=0$. Moreover, for every admissible $v$ we have $v-\tphi_i\in H_0^1(\Omega)$ and $v-\tphi_i\ge 0$ a.e., so by integration by parts,
\[
\int_\Omega \nabla \tphi_i\cdot \nabla(v-\tphi_i)\,dx
=-\int_\Omega (\Delta \tphi_i)(v-\tphi_i)\,dx\ge 0,
\]
because $\Delta\tphi_i<0$ in $\Omega$. Thus $\tphi_i$ also satisfies the same variational inequality. By the uniqueness of the solution to the obstacle problem \eqref{def-variational-intro}, it follows that $\widetilde u_i=\tphi_i$.

We have shown that every sequence $\lambda_j\downarrow 0$ has a subsequence converging to $\tphi_i$ in $C^1(\overline\Omega)$. Since the family $\{U_i(\lambda):0<\lambda\le 1\}$ is precompact in $C^1(\overline\Omega)$ by \eqref{eq:ell_c1alpha_estimate}, this implies that the whole family converges:
\[
U_i(\lambda)\to \tphi_i
\qquad\text{in } C^1(\overline\Omega)
\quad\text{as }\lambda\downarrow 0.
\]
Therefore $\tphi_1=\tphi_2$.
\end{proof}


\section{Inverse Parabolic Obstacle Problem}
\label{sec:inverse_parabolic}
\subsection{Determining the coefficient}

We begin by isolating the recovery step for the time-independent coefficient $c(x)$ in the inverse parabolic obstacle problem. The point is that near the constant state $M_0$ the obstacle condition is inactive, so the nonlinear parabolic Dirichlet-to-Neumann map reduces to the Dirichlet-to-Neumann map for the linear equation $(\partial_t-c(x)\Delta)u(t,x)=0$.


\begin{lemma}
  \label{lm:coefficient_id}
  If $\Lambdapar_{c_1,\varphi_1}=\Lambdapar_{c_2,\varphi_2}$, then $c_1=c_2$.
\end{lemma}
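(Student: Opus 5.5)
The plan is to restrict the nonlinear map to boundary data that are small perturbations of the constant state $M_0$. On such data the obstacle is never touched, so the parabolic Dirichlet-to-Neumann map coincides with the linear one. We then invoke a known uniqueness result for the linear inverse problem.

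\textbf{Step 1: the obstacle stays inactive.} Set $m:=M_0-\max(\max_{\overline\Omega}\varphi_1,\max_{\overline\Omega}\varphi_2)>0$. Consider data $f=M_0+\eta$, where $\eta\in C^\infty([0,\infty)\times\p\Omega)$ satisfies $\eta(0,\cdot)=0$, vanishes to infinite order at $t=0$ (for compatibility), and has $\|\eta\|_{L^\infty}<m$. Then $f\in\cApar_{\varphi_i,M_0}$ for $i=1,2$. Let $w_i$ solve the linear problem
\[
(\p_t-c_i\Delta)w_i=0,\qquad w_i|_{[0,\infty)\times\p\Omega}=f,\qquad w_i(0,\cdot)=M_0.
\]
By the parabolic maximum principle,
\[
\min f\le w_i\le \max f,\qquad\text{so}\qquad w_i>M_0-m\ge\varphi_i .
\]
Hence $w_i$ lies in $\cKpar_{\varphi_i,f}$. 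It solves the variational inequality \eqref{eq:par_obstacle_weak} with equality, since
\[
\int_\Omega \p_t w_i\,(v-w_i)+c_i\nabla w_i\cdot\nabla(v-w_i)\,dx=0 .
\]
Note that $w_i-M_0$ satisfies a heat-type equation. This integration by parts should be carried out with the divergence form in mind: the paper's weak form uses $c\nabla u\cdot\nabla(\cdot)$, while the strong form uses $c\Delta$. I will follow the strong form \eqref{eq:par_obstacle_strong}, which is the relevant one given the regularity from \cite{Friedman82}. By uniqueness of the parabolic obstacle problem, $u^f_{\varphi_i}=w_i$. Therefore $\Lambdapar_{c_i,\varphi_i}(f)=\p_\nu w_i$.

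\textbf{Step 2: the linear operator is determined.} Since $\eta\mapsto\p_\nu(w_i-M_0)$ is linear, equality on the small ball extends by scaling to all compatible smooth $\eta$. So the linear parabolic Dirichlet-to-Neumann maps of $\p_t-c_1\Delta$ and $\p_t-c_2\Delta$ with zero initial data coincide. Next, divide by $c$: the operator $c\Delta$ is symmetric in $L^2(\Omega,c^{-1}dx)$. Taking the Laplace transform in $t$, we obtain equality of the elliptic Dirichlet-to-Neumann maps for $\Delta+\lambda/c_i$, i.e. $-\Delta-\lambda c_i^{-1}$, for all $\lambda$ in a right half-plane. Analytic continuation is not even needed: a single $\lambda$ suffices. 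Equivalently, this is the Schr\"odinger-type problem with potential $q_i=-\lambda/c_i$.

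Apply the Calder\'on-problem uniqueness for $-\Delta+q$ \cite{SU87} when $n\ge3$. For $n=2$, use the known two-dimensional results, or instead use the spectral and boundary-control reduction \cite{Belishev-heat,KKL,Isakov1993,CK01}. Either route gives $\lambda/c_1=\lambda/c_2$, hence $c_1=c_2$.

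\textbf{Main obstacle.} The delicate step is Step 2: passing rigorously from the time-domain Dirichlet-to-Neumann map to frequency-domain data. This needs growth bounds in $t$ so that the Laplace transform is justified, or it can be done by citing the parabolic uniqueness result directly. I would first try to cite \cite{Isakov1993,CK01} for the parabolic problem with time-independent coefficients and only verify that the hypotheses (smooth positive $c$, zero initial data) hold.
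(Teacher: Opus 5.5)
Your proposal is correct and follows essentially the same route as the paper: restrict to perturbations $M_0+\eta$ small enough that the obstacle is never touched, identify the nonlinear map with the linear parabolic Dirichlet-to-Neumann map, extend to all data by linearity and scaling, and then apply the Canuto--Kavian uniqueness result \cite{CK01} to $c_i^{-1}\p_t u-\Delta u=0$. The differences are cosmetic. You obtain inactivity of the obstacle from the parabolic maximum principle rather than from the $C^1$ boundary estimate, which is if anything cleaner. In your optional Laplace-transform route the transformed operator is $-\Delta+\lambda c_i^{-1}$, not $-\Delta-\lambda c_i^{-1}$; this sign slip is harmless, since Calder\'on uniqueness identifies the potential regardless of its sign.
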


\begin{proof}
For $i=1,2$, let $R_{c_i}$ be the Dirichlet-to-Neumann map for the linear parabolic problem
\begin{align}
  R_{c_i}: C^\infty_0((0,T)\times \p\Omega)&\to C^\infty((0,T)\times \p\Omega),\\
  R_{c_i}f&=\p_\nu u_i^f,
\end{align}
where $u_i^f=u_i^f(x,t)$ 
solves
\begin{equation}
  \label{eq:linear_heat_ci}
  \begin{cases}
    (\p_t-c_i(x)\Delta)u_i^f=0,&\text{in }(0,T)\times \Omega,\\
    u_i^f=f,&\text{on }(0,T)\times \p\Omega,\\
    u_i^f(0,\cdot)=0,&\text{in }\Omega.
  \end{cases}
\end{equation}
By the parabolic boundary regularity estimate \cite[Theorem 5.14]{Lieberman96}, there exists a constant $C_i$ such that
\begin{equation}
  \label{eq:heat_regularity}
  \norm{u_i^f}_{C^1([0,T]\times \Omega)}\le C_i \norm{f}_{C^3([0,T]\times \p\Omega)}.
\end{equation}
Let $C=\max\{C_1,C_2\}$. Choose $\varepsilon>0$ so small that $\varphi_i+\varepsilon<M_0$ in $\Omega$ for $i=1,2$, and define
\begin{equation}
  \BB_\varepsilon
  :=
  \left\{
    f\in C^\infty_0((0,T)\times \p\Omega):
    \norm{f}_{C^3([0,T]\times \p\Omega)}\le \frac{\varepsilon}{C}
  \right\}.
\end{equation}
Then for $f\in \BB_\varepsilon$, estimate \eqref{eq:heat_regularity} gives
\[
  \norm{u_i^f}_{C^0([0,T]\times \Omega)}\le \varepsilon,
\]
hence
\[
  M_0+u_i^f\ge M_0-\varepsilon>\varphi_i
  \qquad\text{in }[0,T]\times \Omega.
\]
Therefore the obstacle is inactive, and $M_0+u_i^f$ solves the parabolic obstacle problem \eqref{eq:par_obstacle_strong} with boundary data $M_0+f$. By uniqueness,
\[
u^{M_0+f}_{i,\varphi_i}=M_0+u_i^f.
\]
Consequently,
\begin{equation}
  \label{eq:small_linearization_heat}
  \Lambdapar_{c_i,\varphi_i}(M_0+f)
  =
  \p_\nu u^{M_0+f}_{i,\varphi_i}
  =
  \p_\nu u_i^f
  =
  R_{c_i}f
\end{equation}
for all $f\in \BB_\varepsilon$.

Since $R_{c_i}$ is linear, for arbitrary $f\in C^\infty_0((0,T)\times \p\Omega)$ we may choose
\[
C_f=\frac{\varepsilon}{C\norm{f}_{C^3([0,T]\times \p\Omega)}}
\]
so that $C_f f\in \BB_\varepsilon$. Then \eqref{eq:small_linearization_heat} yields
\[
R_{c_i}f=\frac1{C_f}R_{c_i}(C_f f)
=\frac1{C_f}\Lambdapar_{c_i,\varphi_i}(M_0+C_f f).
\]
Hence $\Lambdapar_{c_1,\varphi_1}=\Lambdapar_{c_2,\varphi_2}$ implies
\[
R_{c_1}=R_{c_2}.
\]

Now write $\rho_i=c_i^{-1}$. Equation \eqref{eq:linear_heat_ci} is equivalent to
\[
\rho_i(x)\p_t u(t,x)-\Delta u(t,x)=0.
\]
This is the class considered by Canuto--Kavian in their boundary determination result for heat equations \cite{CK01},
see also \cite{KKL}. Applying their uniqueness theorem to $\rho_1$ and $\rho_2$, we obtain $\rho_1=\rho_2$, and therefore $c_1=c_2$.
\end{proof}

From now on we assume $\Lambdapar_{c_1,\varphi_1}=\Lambdapar_{c_2,\varphi_2}$ and, by Lemma \ref{lm:coefficient_id}, write the common coefficient simply as $c(x)$.

\subsection{Additional regularity}
In this section we collect some regularity results for the parabolic obstacle problem for the reduction, which will allow us to reduce our analysis to the elliptic obstacle problem. 

As in the elliptic part, we begin by reducing our inverse problem to the case where the trace of obstacle vanishes on the boundary. Let $\psi=\psi(x)$ be defined as in \eqref{eq:elliptic}.
For $i=1,2$, set
\[
\widetilde{\varphi}_i=\varphi_i-\psi,\qquad
\widetilde{M_0}=M_0-\psi,\qquad
\widetilde{f}=f-\psi,\qquad
\widetilde{u}_{\varphi_i}^f=u_{\varphi_i}^f-\psi,
\]
where $u_{\varphi_i}^f(t,x)$ solves \eqref{eq:par_obstacle_strong} associated with the obstacle $\varphi_i$ and the boundary value $f$.
Then $\widetilde{u}_{\varphi_i}^f$ satisfies \eqref{eq:par_obstacle_strong} associated with obstacle $\widetilde{\varphi}_i$ and Dirichlet condition $\widetilde{f}$.
Moreover,
\[
\Lambdapar_{c,\widetilde{\varphi}_i}(\widetilde{f})
=
\p_\nu \widetilde{u}_{\varphi_i}^f
=
\Lambdapar_{c,\varphi_i}(f)-\p_\nu \psi.
\]
Thus $\Lambdapar_{c,\varphi_1}=\Lambdapar_{c,\varphi_2}$ implies $\Lambdapar_{c,\widetilde{\varphi}_1}=\Lambdapar_{c,\widetilde{\varphi}_2}$.

By the strong maximum principle \cite[Theorem 3.5]{gilbarg1998},
\[
\max_{x\in \Omega}\psi(x)=\max_{x\in \p\Omega}\varphi(x).
\]
Since $\varphi<M_0$ in $\Omega$, it follows that
\[
\widetilde{M_0}=M_0-\psi>\delta>0
\]
for some $\delta$ depending only on $M_0$ and $\varphi$.

The next lemma is the parabolic analogue of the boundary collar regularity used in the elliptic section.

\begin{lemma}
  \label{lm:additional_regularity}
  Let $\tphi,\tM$ be defined as above. Then for any $T>0$ and $f\in \cApar_{\tphi,\tM}$, there exists a neighborhood $U$ of $\p\Omega$ such that $u_{\tphi}^f|_{[0,T]\times U}$ is smooth. Moreover,
  \[
  \nabla u_{\tphi}^f\in H^1([0,T];(H^1(\Omega)^n)^\ast).
  \]
\end{lemma}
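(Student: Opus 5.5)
We split the lemma into two parts: smoothness of $u:=u^f_{\tphi}$ near $[0,T]\times\p\Omega$, and the time regularity of $\nabla u$ in the dual space $(H^1(\Omega)^n)^\ast$.

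\emph{Step 1: the solution stays off the obstacle near $\p\Omega$.} By \cite[Theorem 8.2]{Friedman82}, $u\in W^{1,2}_p([0,T]\times\Omega)$ for every $p<\infty$. For $p$ large, the parabolic Sobolev embedding gives $u\in C^{\alpha,\alpha/2}([0,T]\times\overline\Omega)$; in fact $\nabla_x u$ is Hölder continuous up to the boundary. On $[0,T]\times\p\Omega$ we have $u=f>\tphi$, and at $t=0$ we have $u=\tM>\tphi$ up to $\p\Omega$, since $\tM>\delta>0$ while $\tphi|_{\p\Omega}=0$. The set $[0,T]\times\p\Omega$ is compact, so $f-\tphi|_{\p\Omega}\ge \eta>0$ there. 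Uniform continuity of $u-\tphi$ then yields a collar $U'=\{x\in\Omega:\dist(x,\p\Omega)<r\}$ with $u-\tphi\ge\eta/2$ on $[0,T]\times U'$.

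\emph{Step 2: smoothness in the collar.} By the complementarity condition in \eqref{eq:par_obstacle_strong}, $u$ satisfies the linear equation $(\p_t-c\Delta)u=0$ a.e. in $[0,T]\times U'$. Its lateral data on $\p\Omega$ are the smooth $f$, and its initial data are the constant $\tM$, i.e.\ $M_0-\psi$ with $\psi$ harmonic and smooth up to $\p\Omega$.

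The main obstacle is the corner $\{t=0\}\times\p\Omega$, where compatibility conditions are needed for smoothness up to $t=0$. The zeroth-order condition $f(0,\cdot)=M_0$ holds. We read $\tM$ and $\tilde f$ consistently, both shifted by $\psi$; they then match since $\psi|_{\p\Omega}=\varphi|_{\p\Omega}$ is handled identically. Higher-order conditions such as $\p_t f(0)=c\Delta\tM$ need not hold. If they fail, we will aim only for the smoothness that the argument actually uses: smoothness on $(0,T]\times U$, together with $C^\infty$ in $x$ and $W^{1,2}_p$ up to $t=0$.

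Concretely, we localize with a cutoff $\zeta(x)$ supported in $U'\cup\p\Omega$ and equal to $1$ on a smaller collar $U$. We then apply linear parabolic Schauder/$L^p$ boundary estimates \cite{Lieberman96} to $\zeta u$ and bootstrap. Interior regularity in $(0,T]\times U'$ is standard hypoellipticity. Boundary regularity on $(0,T]\times\p\Omega$ follows from Schauder theory with the smooth data $f$. Near $t=0$ we use that $u-\tM$ solves a heat equation with smooth source and zero initial data; this gives smoothness whenever the compatibility conditions hold, and otherwise the $W^{1,2}_p$ bounds suffice for the later trace and Neumann-map arguments.

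\emph{Step 3: $\nabla u\in H^1([0,T];(H^1(\Omega)^n)^\ast)$.} Since $u\in W^{1,2}_2$, we already have $\nabla u\in L^2([0,T];L^2)\subset L^2([0,T];(H^1)^\ast)$. It remains to show $\p_t\nabla u\in L^2([0,T];(H^1(\Omega)^n)^\ast)$, defined by duality.

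For $\Phi\in H^1(\Omega)^n$, integrating by parts gives
\[
\langle \p_t\nabla u,\Phi\rangle=-\int_\Omega \p_t u\,\operatorname{div}\Phi\,dx+\int_{\p\Omega}\p_t f\,\Phi\cdot\nu\,dS .
\]
Hence
\[
|\langle \p_t\nabla u,\Phi\rangle|\le \bigl(\|\p_t u(t)\|_{L^2(\Omega)}+C\|\p_t f(t)\|_{L^2(\p\Omega)}\bigr)\|\Phi\|_{H^1}
\]
by the trace theorem. Here $\p_t u\in L^2([0,T]\times\Omega)$ comes from $W^{1,2}_2$, and $f$ is smooth, so squaring and integrating in $t$ gives the claim. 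The identity itself is justified by approximating $u$ by smooth functions in $W^{1,2}_2$.
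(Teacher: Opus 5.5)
Your proof is correct in substance, and it differs from the paper's at each step.

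In Step 1 you get the inactive collar from uniform continuity of $u-\tphi$ on $[0,T]\times\overline\Omega$ together with $\min_{[0,T]\times\p\Omega}f>0$. The paper instead applies the minimum principle to the supersolution $u$, getting $u\ge\min\{\tM,f\}>\delta$, and then takes a collar on which $\tphi<\delta$. Your route is more elementary. However, your collar width depends on $T$ through the modulus of continuity of $u$, whereas the paper's depends only on positive lower bounds for $f$ and $\tM$. That $T$-independence is what Lemma~\ref{lm:N_trace_asymptotic} relies on when it uses $(\p_t-c\Delta)u=0$ on $[0,\infty)\times U_\eta$ for eventually constant $f$.

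In Step 2 you use the cutoff $\zeta u$ with boundary Schauder estimates. The paper instead shows the trace on the inner surface $\Upsilon_\eta$ is smooth (Lemmas~\ref{lm:transient_smooth} and~\ref{lm:interior_regularity}) and then solves the linear problem on $U_\eta$. Both work for $t>0$.

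In Step 3 your duality identity, with the boundary term $\int_{\p\Omega}\p_t f\,\Phi\cdot\nu\,dS$, is more careful than the paper's one-line inclusion $L^2([0,T];H^{-1}(\Omega)^n)\subset L^2([0,T];(H^1(\Omega)^n)^\ast)$. The point is that $H^{-1}(\Omega)$ is the dual of $H^1_0(\Omega)$, not of $H^1(\Omega)$. Your boundary term is exactly what is needed to define $\p_t\nabla u(t)$ on all of $H^1(\Omega)^n$.

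Your concern about the corner $\{0\}\times\p\Omega$ is justified, and the paper's proof passes over it. It concludes $u\in C^\infty([0,T]\times\overline{U_\eta})$ from the collar boundary value problem without checking compatibility. Since $\Delta\tM=-\Delta\psi=0$, the conditions are $\p_t^kf(0,\cdot)=0$ on $\p\Omega$ for $k\ge1$, and $\cApar_{\tphi,\tM}$ does not impose them. Take $f(t,\cdot)=\tM|_{\p\Omega}+t$ near $t=0$. If $\p_tu$ and $D_x^2u$ were continuous up to the corner, the equation would give $1=\p_tf(0,\cdot)=c\Delta\tM|_{\p\Omega}=0$.

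Two corrections to your weakened statement are needed.

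First, drop ``$C^\infty$ in $x$ up to $t=0$''. Since $\p_tu=c\Delta u$ in the collar, the example above already makes $D_x^2u$ discontinuous at the corner.

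Second, you are too pessimistic away from $\p\Omega$. The function $w=u-\tM$ solves $(\p_t-c\Delta)w=0$ in the collar with $w(0,\cdot)=0$. Hence its extension by zero to $t<0$ is a continuous weak solution across $t=0$. Interior regularity (Lemma~\ref{lm:interior_regularity} after a time translation) then makes $u$ smooth up to $t=0$ at every point of the open collar.

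So, reading $U$ as an open collar in $\Omega$, you obtain the lemma exactly as stated, with $\{0\}\times\p\Omega$ the only exceptional set. Nothing used later needs more: the equation in the collar, continuity of $\p_\nu u$ from $W^{1,2}_p$, and the second assertion.
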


To prove Lemma \ref{lm:additional_regularity}, we use two auxiliary lemmas.

\begin{lemma}
  \label{lm:interior_regularity}
  Let $c\in C^\infty(\overline\Omega)$ and $U\Subset \widetilde U\subset \Omega$. Let $0<\widetilde T_1<T_1<T_2<\widetilde T_2$. Suppose
  \[
  u\in C^0((\widetilde T_1,\widetilde T_2)\times \widetilde U)
  \]
  satisfies
  \[
  (\p_t-c(x)\Delta)u(t,x)=0
  \qquad\text{in }(\widetilde T_1,\widetilde T_2)\times \widetilde U
  \]
  in the weak sense. Then $u\in C^\infty([T_1,T_2]\times \overline U)$. 
  Moreover, for each $k\in \mathbb N$ there is a constant $C_k$ such that
  \begin{equation}
    \label{eq:lm_3_1}
    \norm{u}_{C^k([T_1,T_2]\times \overline U)}
    \le C_k \norm{u}_{C^0((\widetilde T_1,\widetilde T_2)\times \widetilde U)}.
  \end{equation}
\end{lemma}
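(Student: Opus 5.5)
The idea is to combine standard interior parabolic regularity for operators with smooth coefficients with a compactness/bootstrapping argument, so that the estimate \eqref{eq:lm_3_1} comes out in terms of the sup norm only. Since $c\in C^\infty(\overline\Omega)$ is strictly positive, write the equation in divergence-type form $c^{-1}\p_t u-\Delta u=0$, or equivalently keep it as $\p_t u-c\Delta u=0$, a linear uniformly parabolic operator with smooth, time-independent coefficients.

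The plan has three steps.

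\textbf{Step 1: parabolic cylinders.} Fix an intermediate chain of cylinders
\[
Q_0:=(\widetilde T_1,\widetilde T_2)\times\widetilde U\Supset Q_1\Supset Q_2\Supset\cdots\Supset [T_1,T_2]\times\overline U .
\]
Each $Q_{j+1}$ has positive parabolic distance from the parabolic boundary of $Q_j$. This is possible because $U\Subset\widetilde U$ and $\widetilde T_1<T_1$, $T_2<\widetilde T_2$; no room is needed at the top in time, but we have it anyway.

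\textbf{Step 2: from a weak continuous solution to a strong solution.} Since $u$ is only assumed continuous and a weak solution, we first upgrade it.
\begin{itemize}
\item Mollify in $x$ and $t$, or use the local $W^{1,2}_p$ theory (e.g.\ \cite{Lieberman96}, or Friedman's interior $L^p$ estimates) for the operator $\p_t-c\Delta$. This gives $u\in W^{1,2}_p(Q_1)$ for all $p<\infty$, with a bound by $\|u\|_{L^p(Q_0)}\lesssim\|u\|_{C^0(Q_0)}$.
\item Alternatively, for a very weak (distributional) solution, use hypoellipticity of $\p_t-c\Delta$. It has smooth coefficients and is parabolic, hence hypoelliptic by H\"ormander's theorem, or by the parametrix construction. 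This yields $u\in C^\infty$ immediately, and the quantitative estimate is handled in Step 3.
\end{itemize}

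\textbf{Step 3: bootstrap by differentiation.} Since $c$ is independent of $t$, $\p_t u$ solves the same equation. Spatial derivatives $\p_{x_i}u$ solve
\[
(\p_t-c\Delta)\p_i u=(\p_i c)\Delta u ,
\]
whose right-hand side is controlled at the previous level. Iterate the interior Schauder estimate
\[
\|w\|_{C^{2+\alpha,1+\alpha/2}(Q_{j+1})}\le C\bigl(\|(\p_t-c\Delta)w\|_{C^{\alpha,\alpha/2}(Q_j)}+\|w\|_{C^0(Q_j)}\bigr)
\]
over the chain. Each step gains derivatives at the cost of shrinking the cylinder. After finitely many steps, depending on $k$, we obtain \eqref{eq:lm_3_1} with $C_k$ depending on $k$, $c$, and the geometry of the cylinders.

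\textbf{Expected main obstacle.} The delicate point is the first regularity gain: passing from a merely continuous weak solution to one where Schauder theory applies, while keeping only $\|u\|_{C^0}$ on the right-hand side. I would handle it with an approximation argument.
\begin{enumerate}
\item Mollify $u$ in space–time to get $u_\delta$.
\item Note that $u_\delta$ solves $(\p_t-c\Delta)u_\delta=[c,\ast\rho_\delta]\Delta u$ weakly. The commutator term tends to zero in suitable negative norms, by Friedrichs' lemma.
\item Apply the $L^2$-based interior energy estimates and their higher-order analogues to $u_\delta$, uniformly in $\delta$.
\item Let $\delta\to0$ to obtain $u\in H^m_{\mathrm{loc}}$ for every $m$.
\item Conclude by Sobolev embedding, which also gives the quantitative bound.
\end{enumerate}
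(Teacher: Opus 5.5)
Your proposal is correct, but it is organized differently from the paper's proof.

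\textbf{The paper's route.} The paper works with the mollifications $u_\varepsilon$ throughout. It writes $(\partial_t-c\Delta)u_\varepsilon=f_\varepsilon$, where $f_\varepsilon$ is the commutator of multiplication by $c$ with spatial mollification, applied to $\Delta u$. It asserts that $\|f_\varepsilon\|_{C^k}\to 0$ because $c$ is smooth. It then applies the interior bound $\|u_\varepsilon\|_{C^1(Q_\delta)}\lesssim\|f_\varepsilon\|_{C^0(Q_{2\delta})}+\|u_\varepsilon\|_{C^0(Q_{2\delta})}$ to differences, so that $\{u_\varepsilon\}$ is Cauchy in $C^1$, and iterates for higher derivatives.

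\textbf{Your route.} You separate a qualitative step from a quantitative one. The qualitative step is smoothness of $u$, obtained either from hypoellipticity of $\partial_t-c\Delta$ or from a Friedrichs-lemma and $L^2$-energy bootstrap. The quantitative step is an interior Schauder bootstrap on a chain of cylinders. It uses that $\partial_t u$ solves the same equation because $c$ is time-independent, and that $\partial_x^\gamma u$ solves it with lower-order sources of the form $(\partial^{\beta}c)\,\Delta\partial^{\gamma'}u$, $|\gamma'|<|\gamma|$.

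\textbf{What each approach buys.} Your route handles correctly the point you singled out as delicate. For merely continuous $u$, the commutator $f_\varepsilon$ involves $\Delta u$, and smoothness of $c$ only gives convergence to zero in negative Sobolev norms. In sup norm one a priori has only a bound of order $\varepsilon^{-1}\omega_u(\varepsilon)$, where $\omega_u$ is the modulus of continuity of $u$. So the paper's claim $\|f_\varepsilon\|_{C^k}\to 0$ tacitly uses regularity of $u$ that is being proved. Hypoellipticity avoids this issue entirely. Your energy scheme needs only uniform boundedness of $[c,\rho_\delta*]\partial_i$ on Sobolev spaces, gaining one derivative per step. The paper's argument, once that step is granted, is shorter and stays within a single H\"older-space framework.

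\textbf{A minor caveat on your Step 2.} The first option there (local $W^{1,2}_p$ theory) does not by itself upgrade a continuous weak solution. Those interior estimates are a priori bounds for functions already in $W^{1,2}_p$. It is the hypoellipticity or the approximation argument that does the work.
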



\begin{proof}
To simplify notation, define
\[
Q_r(t,x):=(t-r^2/2,t+r^2/2)\times B(x,r).
\]
For any $(t,x)\in [T_1,T_2]\times U$, choose $\delta>0$ such that
\[
Q_{2\delta}(t,x)\subset (\widetilde T_1,\widetilde T_2)\times \widetilde U.
\]
Let $\eta_\varepsilon(t)$ and $\xi_\varepsilon(x)$ be standard time and space mollifiers, and set
\[
u_\varepsilon(t,x)
=
\int_{-\varepsilon}^{\varepsilon}
\int_{B(0,\varepsilon)}
\eta_\varepsilon(\tau)\xi_\varepsilon(y)u(t-\tau,x-y)\,dy\,d\tau.
\]
For $\varepsilon$ sufficiently small, $u_\varepsilon\in C^\infty(Q_{2\delta}(t,x))$ and
\[
\norm{u_\varepsilon-u}_{C^0(Q_{2\delta}(t,x))}\to 0
\]
as $\varepsilon\to 0$; see \cite[Appendix C, Theorem 7]{evans}. Moreover,
\begin{align*}
f_\varepsilon(t,x) &:= (\p_t-c(x)\Delta)u_\varepsilon(t,x) \\
&= \int_{-\varepsilon}^{\varepsilon}\int_{B(0,\varepsilon)} \eta_\varepsilon(\tau)\xi_\varepsilon(y)\big(c(x-y)-c(x)\big)\,\Delta u(t-\tau,x-y)\,dy\,d\tau.
\end{align*}
Since $c$ is smooth, $\norm{f_\varepsilon}_{C^k(Q_{2\delta}(t,x))}\to 0$ for each $k\ge 0$. By interior parabolic regularity \cite[Theorem 8.12.1]{Krylov96},
\[
\norm{u_\varepsilon}_{C^1(Q_\delta(t,x))}
\le C\left(
\norm{f_\varepsilon}_{C^0(Q_{2\delta}(t,x))}
+
\norm{u_\varepsilon}_{C^0(Q_{2\delta}(t,x))}
\right).
\]
Hence $\{u_\varepsilon\}$ is Cauchy in $C^1(Q_\delta(t,x))$, so $u\in C^1(Q_\delta(t,x))$ and
\[
\norm{u}_{C^1(Q_\delta(t,x))}
\le C\norm{u}_{C^0(Q_{2\delta}(t,x))}.
\]
By compactness of $[T_1,T_2]\times \overline U$, this yields \eqref{eq:lm_3_1} for $k=1$. Iterating the argument for higher derivatives proves smoothness and the general estimate.
\end{proof}

\begin{lemma}
  \label{lm:transient_smooth}
  Let $f\in \cApar_{\tphi,\tM}$ and let $u_{\tphi}^f$ solve \eqref{eq:par_obstacle_strong} associated with obstacle $\tphi$ and Dirichlet condition $f$. Then there exists $\tau>0$ such that
  \[
  u_{\tphi}^f|_{[0,\tau]\times \Omega}\in C^\infty([0,\tau]\times \Omega).
  \]
\end{lemma}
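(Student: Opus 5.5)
The idea is to show that for a short time the obstacle is never touched, so $u_{\tphi}^f$ solves the linear equation $(\p_t-c(x)\Delta)u=0$ on $[0,\tau]\times\Omega$. Smoothness then follows from linear parabolic regularity.

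The key input is the initial datum. It is the function $\tM=M_0-\psi$, which is smooth and satisfies $\tM-\tphi=M_0-\varphi>\delta'>0$ on $\overline\Omega$ for some $\delta'>0$, since $\varphi<M_0$ on the compact set $\overline\Omega$. The boundary datum $f$ also stays strictly above $\tphi|_{\p\Omega}$. By continuity of $f$ on the compact set $[0,1]\times\p\Omega$, there is a gap $f-\tphi\ge\delta''>0$ on $[0,1]\times\p\Omega$.

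\emph{Step 1: solve the linear problem.} Let $w$ solve
\[
(\p_t-c\Delta)w=0 \text{ in } (0,1)\times\Omega,\qquad w|_{\p\Omega}=f,\qquad w(0,\cdot)=\tM.
\]
The compatibility condition at $t=0$ holds to zeroth order, since $f(0,\cdot)=\tM|_{\p\Omega}$. From \cite[Theorem 5.14]{Lieberman96}, or the $W^{1,2}_p$ theory with $p$ large together with Sobolev embedding, I get $w\in C^0([0,1]\times\overline\Omega)$.

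\emph{Step 2: stay above the obstacle.} Since $w(0,\cdot)-\tphi\ge\delta'$ and $w$ is uniformly continuous on $[0,1]\times\overline\Omega$, there is $\tau>0$ with $w>\tphi$ on $[0,\tau]\times\overline\Omega$. Then $w$ is admissible. It satisfies $(\p_t-c\Delta)w=0\ge 0$, $w\ge\tphi$, and the complementarity condition trivially. So $w$ solves \eqref{eq:par_obstacle_strong} on $[0,\tau]$.

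\emph{Step 3: identify $w$ with $u_{\tphi}^f$.} The standard argument is uniqueness of the parabolic variational inequality on $[0,\tau]$. Test \eqref{eq:par_obstacle_weak} for $u=u_{\tphi}^f$ against $v=w$, and the inequality for $w$ against $v=u$. Adding the two gives
\[
\tfrac12\frac{d}{dt}\int|u-w|^2\le 0
\]
after handling the weight $c$. Here I use the standard reformulation: divide by $c$, or use the weighted $L^2$ inner product with weight $1/c$ under which $c\Delta$ is symmetric. Hence $u=w$ on $[0,\tau]$. This can also be quoted from \cite{Friedman82}.

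\emph{Step 4: smoothness.} In the interior, smoothness of $w$ in $(0,\tau]\times\Omega$ follows from Lemma \ref{lm:interior_regularity}. Up to $t=0$, the initial datum $\tM$ is smooth and $c$ is smooth. I would extend $w$ to negative times by solving backward-compatible data, or more simply use local smoothness near $t=0$ in the interior: parabolic regularity up to the initial time with smooth initial data, applied to $w-\tM$, which solves an inhomogeneous equation with smooth source $c\Delta\tM$ and zero initial data.

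\emph{Main obstacle.} Smoothness up to $t=0$ and near the corner $\{0\}\times\p\Omega$ requires higher-order compatibility between $f$ and $\tM$. This is why the lemma asserts smoothness only on $[0,\tau]\times\Omega$, with $\Omega$ open, and not up to $\p\Omega$. I therefore expect the delicate point to be getting interior-in-space regularity up to $t=0$ without global compatibility. I would handle it with cutoffs in $x$ supported away from $\p\Omega$: for a cutoff $\chi$, the function $\chi(w-\tM)$ solves a problem with zero boundary and initial data and a source supported away from $\p\Omega$ involving lower-order terms of $w$. Bootstrapping with nested cutoffs then gives all derivatives.
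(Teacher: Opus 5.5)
Your proposal is correct and follows the paper's proof. You solve the linear problem with data $(f,\tM)$, use continuity at $t=0$ and the gap $\tM-\tphi>0$ to keep the solution strictly above $\tphi$ on $[0,\tau]$, identify it with $u_{\tphi}^f$ by uniqueness, and conclude by linear parabolic regularity. Your extra details fill in steps the paper states in one line: the $c^{-1}$-weighted energy argument for uniqueness, and the spatial-cutoff bootstrap giving smoothness up to $t=0$ away from the corner $\{0\}\times\p\Omega$.
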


\begin{proof}
Let $v$ solve
\[
\begin{cases}
(\p_t-c(x)\Delta)v(t,x)=0,&\text{in }[0,T]\times \Omega,\\
v|_{[0,T]\times \p\Omega}=f,\qquad v(0,\cdot)=\tM.
\end{cases}
\]
Choose $\delta>0$ such that $\tM>\tphi+\delta$ in $\Omega$. By the boundary Hölder estimate \cite[Theorem 5.14]{Lieberman96}, $v$ stays within $\delta$ of $\tM$ for small positive time. Hence there exists $\tau>0$ such that
\[
v(t,x)>\tphi(x)
\qquad\text{in }[0,\tau]\times \Omega.
\]
Therefore the obstacle is inactive on $[0,\tau]\times \Omega$, and uniqueness for \eqref{eq:par_obstacle_strong} gives $u_{\tphi}^f=v$ there. Since $v$ is a smooth solution of a linear parabolic equation, the claim follows.
\end{proof}

\begin{proof}[Proof of Lemma \ref{lm:additional_regularity}]
Since $u_{\tphi}^f\in W^{1,2}_p([0,T]\times \Omega)$ for every $1<p<\infty$, Sobolev embedding yields
\[
u_{\tphi}^f\in C^0([0,T]\times \Omega).
\]
Also, by \cite[Theorem 8.2]{Friedman82},
\[
\nabla u_{\tphi}^f\in L^2([0,T];L^2(\Omega)^n)\subset L^2([0,T];(H^1(\Omega)^n)^\ast),
\]
and since $\p_t u_{\tphi}^f\in L^2([0,T];L^2(\Omega))$, we have
\[
\nabla \p_t u_{\tphi}^f\in L^2([0,T];H^{-1}(\Omega)^n)\subset L^2([0,T];(H^1(\Omega)^n)^\ast).
\]
Thus
\[
\nabla u_{\tphi}^f\in H^1([0,T];(H^1(\Omega)^n)^\ast).
\]

It remains to prove smoothness near the boundary. Since $\tM>0$ and $f>0$, choose $\delta>0$ such that
\[
\tM>\delta>0
\qquad\text{and}\qquad
f(t,x)>\delta>0
\quad\text{on }[0,T]\times \p\Omega.
\]
Because $u_{\tphi}^f(t,x)$ is continuous and satisfies
\[
(\p_t-c(x)\Delta)u_{\tphi}^f(t,x)\ge 0
\]
in the weak sense, the parabolic maximum principle for supersolutions (see e.g. \cite[Corollary 7.4]{Lieberman96}) implies
\[
u_{\tphi}^f\ge \min\{\tM,f\}>\delta
\qquad\text{in }[0,T]\times \Omega.
\]
Define the collar neighborhood
\begin{equation}
  \label{eq:def_collar_neighborhood}
  U_\eta:=\{x\in \Omega:\dist(x,\p\Omega)<\eta\}.
\end{equation}
Since $\tphi|_{\p\Omega}=0$ and $\tphi$ is continuous, there exists $\widetilde \eta>0$ such that
\[
\tphi(x)<\delta
\qquad\text{for }x\in U_{\widetilde \eta}.
\]
Hence $u_{\tphi}^f>\tphi$ in $[0,T]\times U_{\widetilde \eta}$, so the obstacle is inactive there and
\[
(\p_t-c(x)\Delta)u_{\tphi}^f(t,x)=0
\qquad\text{in }[0,T]\times U_{\widetilde \eta}.
\]

By \cite[Lemma 14.16]{gilbarg1998}, we may choose $\eta\in (0,\widetilde \eta)$ such that $U_\eta$ is a smooth domain. Let
\[
\Upsilon_\eta:=\{x\in \Omega:\dist(x,\p\Omega)=\eta\}.
\]
Lemma \ref{lm:transient_smooth} gives smoothness on $[0,\tau]\times \Upsilon_\eta$ for some $\tau>0$, and Lemma \ref{lm:interior_regularity} gives smoothness on $[\tau,T]\times \Upsilon_\eta$. Thus the trace
\[
g:=u_{\tphi}^f|_{[0,T]\times \Upsilon_\eta}
\]
is smooth. Therefore $u_{\tphi}^f(t,x)$ solves the classical linear boundary value problem
\[
\begin{cases}
(\p_t-c(x)\Delta)u_{\tphi}^f(t,x)=0,&\text{in }[0,T]\times U_\eta,\\
u_{\tphi}^f|_{[0,T]\times \Upsilon_\eta}=g,\qquad
u_{\tphi}^f|_{[0,T]\times \p\Omega}=f,\\
u_{\tphi}^f(0,\cdot)|_{U_\eta}=\tM|_{U_\eta},
\end{cases}
\]
and hence
\[
u_{\tphi}^f\in C^\infty([0,T]\times \overline{U_\eta}).
\]
This proves the lemma.
\end{proof}



\subsection{Reduction to the inverse elliptic obstacle problem}

We now pass from the parabolic obstacle problem to the stationary elliptic one by showing that solutions with eventually constant boundary data converge to the elliptic obstacle solution. Our main result in this section is the following proposition.




\begin{proposition}
  \label{thm:reduction}
  Let $\tphi_1,\tphi_2\in C^{2,1}(\overline\Omega)$, satisfy
  \begin{equation}
      \tphi_1|_{\p\Omega} = \tphi_2|_{\p\Omega} = 0,\qquad \Delta \tphi_1 < 0,\ \Delta \tphi_2 <0 \text{ in }\Omega.
  \end{equation}
  Let $c\in C^\infty(\overline{\Omega})$ be positive.
     Suppose that
  \[
  \Lambdapar_{c,\tphi_1}(f)=\Lambdapar_{c,\tphi_2}(f)\text{  for all } f\in \cApar_{\tphi_1,\tM}.
  \]
  Then
  \[
  \Theta_{\tphi_1}(g)=\Theta_{\tphi_2}(g)
  \text{  for all }  g > 0.
  \]
\end{proposition}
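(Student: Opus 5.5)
Given $g\in C^\infty(\p\Omega)$ with $g>0$, I will build a parabolic boundary datum that starts at the constant $\tM$ and becomes stationary, equal to $g$. Then I will show that the parabolic solution converges, as $t\to\infty$, to the elliptic obstacle solution $u^g_{\tphi_i}$, in a topology strong enough to pass normal derivatives on $\p\Omega$ to the limit.

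\textbf{Construction of the datum.} Strictly, $\tM=M_0-\psi$ is a function on $\overline\Omega$, and the compatibility $f(0,\cdot)=\tM|_{\p\Omega}$ is meant. Fix a smooth cutoff $\theta(t)$ with $\theta=0$ near $t=0$ and $\theta=1$ for $t\ge 1$, and set
\[
f(t,x)=(1-\theta(t))\,\tM(x)+\theta(t)\,g(x),\qquad x\in\p\Omega .
\]
Both $\tM|_{\p\Omega}>0$ and $g>0$, so $f>0=\tphi_i|_{\p\Omega}$ and $f\in\cApar_{\tphi_i,\tM}$ (up to this compatibility convention). By hypothesis the two solutions $u_i(t,\cdot):=u^f_{\tphi_i}(t,\cdot)$ have equal normal derivatives for all $t$.

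\textbf{Convergence as $t\to\infty$.} For $t\ge1$ the boundary data are time independent. I will use the energy structure: the elliptic solution $u^g_i$ is admissible for the parabolic variational inequality on $[1,\infty)$. Testing \eqref{eq:par_obstacle_weak} with $v=u^g_i$, and combining with the elliptic inequality tested by $u_i(t)$, gives
\[
\frac12\frac{d}{dt}\int_\Omega c^{-1}\abs{u_i-u^g_i}^2\,dx+\int_\Omega\abs{\nabla(u_i-u^g_i)}^2\,dx\le 0 .
\]
Here I rewrite the problem as $c^{-1}\p_t u-\Delta u$; the weight $c^{-1}$ requires handling the weak formulation with $c$ carefully. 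Poincaré's inequality then yields exponential decay of $\norm{u_i(t)-u^g_i}_{L^2}$, and integrated decay of the $H^1$ norm. Next I upgrade this near the boundary. On the collar $U_\eta$ from Lemma \ref{lm:additional_regularity}, the difference $w=u_i-u^g_i$ solves the linear equation $(\p_t-c\Delta)w=0$ with zero Dirichlet data on $\p\Omega$ for $t\ge1$. This holds because $u^g_i$ is harmonic there and, being time independent, satisfies $\p_t u^g_i-c\Delta u^g_i=0$.

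To obtain uniform smallness of $w$, I will use $L^\infty$ bounds from comparison: $u_i$ lies between the solutions of the heat problem with obstacle-free data. Local parabolic estimates up to the flat-in-time boundary (Lemma \ref{lm:interior_regularity} together with boundary Schauder estimates, as in \cite{Lieberman96}) then convert $L^2$ decay on $[t-1,t+1]\times U_\eta$ into $C^1$ decay on a thinner collar including $\p\Omega$. Hence $\p_\nu u_i(t)\to\p_\nu u^g_i=\Theta_{\tphi_i}(g)$ uniformly on $\p\Omega$.

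\textbf{Conclusion.} Since $\p_\nu u_1(t)=\p_\nu u_2(t)$ for all $t$, taking the limit gives $\Theta_{\tphi_1}(g)=\Theta_{\tphi_2}(g)$.

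\textbf{Main obstacle.} The delicate step is the convergence argument: justifying the energy inequality with the variable coefficient $c$ and the time-dependent constraint, and then transferring the $L^2$-in-space decay to $C^1$ convergence at $\p\Omega$. For the latter I will rely on the fact that the obstacle is inactive on a uniform collar. This follows from the lower bound $u_i>\delta$ established in the proof of Lemma \ref{lm:additional_regularity}, which holds uniformly for all $T$. On that collar, standard linear boundary regularity applies.
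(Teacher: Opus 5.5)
Your proposal is correct, but the convergence step follows a different route from the paper's.

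\textbf{The paper's route.} In Lemmas \ref{lm:H_1_asymptotic} and \ref{lm:N_trace_asymptotic} the paper tests the variational inequality with time translates of $u$. This gives the dissipation identity $\langle c^{-1}\p_t u,\p_t u\rangle=-\tfrac12\p_t A[u]$, hence $\int^\infty\norm{\p_t u}_{L^2}^2<\infty$. It then bounds $\norm{u(t)-v^g}_{H^1}$ by $\norm{\p_t u(t)}_{L^2}$ using the parabolic and elliptic inequalities. Finally it controls $\p_\nu w$ in $H^{-1/2}(\p\Omega)$ by $\norm{w}_{H^1}+\norm{\Delta w}_{L^2}$ on the inactive collar, where $\Delta w=c^{-1}\p_t u$.

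\textbf{Your route.} You test against $u^g_i$ directly and add the elliptic inequality. This gives a contraction estimate for $\int c^{-1}\abs{u_i-u^g_i}^2$ with exponential decay. You then use linear parabolic boundary regularity on a uniform collar, with time-translation-invariant constants, to get $C^1$ convergence up to $\p\Omega$.

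\textbf{What each buys.} Your route needs more machinery: local boundary $L^2\to C^1$ estimates for $c^{-1}\p_t-\Delta$ with zero lateral data. These are standard here since $c$ and $\p\Omega$ are smooth. In return you get convergence for every $t$, with a rate, and in $C(\p\Omega)$, the natural target space of $\Theta$. The paper's route is softer, using only $H^1$ and $L^2$ information, but it yields only essential convergence in $H^{-1/2}$.

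In fact, square integrability of $\norm{\p_t u(t)}_{L^2}$ alone does not imply $\limess_{t\to\infty}\norm{\p_t u(t)}_{L^2}=0$. The paper's argument really has to run along a sequence $t_k\to\infty$ with $\norm{\p_t u_1(t_k)}_{L^2}+\norm{\p_t u_2(t_k)}_{L^2}\to 0$. That suffices because $\p_\nu u_1(t)=\p_\nu u_2(t)$ for all $t$, and your approach avoids the issue entirely.

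You also make explicit two things the paper leaves implicit: the construction of the admissible datum, and the compatibility $f(0,\cdot)=\tM|_{\p\Omega}$ when $\tM$ is a function.

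Two small points to tighten:
\begin{itemize}
\item The comparison bound gives only boundedness of $u_i$, not smallness of $w$. Smallness must come from the $L^2$ decay, either through a local $L^2\to L^\infty$ estimate up to $\p\Omega$ or by interpolating against uniform $C^{1,\alpha}$ bounds.
\item Shrink the collar so that also $\tphi<\min_{\p\Omega} g$ there. Then $u^g_i$ is harmonic on it, since $u^g_i\ge\min_{\p\Omega} g$ by the minimum principle.
\end{itemize}
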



We argue through the variational formulation.
To simplify notations, we write
\begin{equation}
  \label{eq:def_a}
  a(u,v):=\int_\Omega \nabla u(x)\cdot \nabla v(x)\,dx,
\end{equation}
and denote by
\[
A(u):=a(u,u)=\int_\Omega |\nabla u|^2\,dx
\]
the associated quadratic form.
Then we have the following equivalent representation for the variational form \eqref{eq:par_obstacle_weak}.
\begin{lemma}
  \label{lm:par_weak_form}
  Let $u$ solve \eqref{eq:par_obstacle_strong} with obstacle $\tphi$ and boundary condition $f\in \cApar_{\tphi,\widetilde M_0}$. Then for any $v\in \cKpar_{\tphi,f}$ and for a.e. $t\ge 0$, there holds
  \begin{equation}
    \label{eq:par_weak_form}
    \langle c^{-1}\p_t u(t),\,v(t)-u(t)\rangle_{L^2(\Omega)}+a(u(t),\,v(t)-u(t))\ge 0.
  \end{equation}
\end{lemma}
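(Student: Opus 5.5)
The plan is to derive \eqref{eq:par_weak_form} directly from \eqref{eq:par_obstacle_weak} and the strong form \eqref{eq:par_obstacle_strong}. The only change is the weight $c^{-1}$: \eqref{eq:par_obstacle_weak} carries the weight $1$ on the time derivative and $c$ on the gradient term, whereas \eqref{eq:par_weak_form} carries $c^{-1}$ on the time derivative and no weight on the gradient. So I would not divide \eqref{eq:par_obstacle_weak} by $c$, which is not legitimate for a nonconstant $c$ inside the integral. Instead I would work from the pointwise relations in \eqref{eq:par_obstacle_strong}, which hold because $u\in W^{1,2}_p([0,T]\times\Omega)$ for all $p$. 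In particular, for a.e. $t$ we have $u(t)\in H^2(\Omega)$ and $\p_t u(t)\in L^2(\Omega)$.

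Fix $v\in \cKpar_{\tphi,f}$ and a time $t$ at which $u(t)\in H^2(\Omega)$, $v(t)\in H^1(\Omega)$, the traces agree, and the three pointwise relations of \eqref{eq:par_obstacle_strong} hold a.e. in $\Omega$. By Fubini this is a.e. $t$. Set $w:=v(t)-u(t)\in H^1_0(\Omega)$. Green's formula, with no boundary term since $w$ vanishes on $\p\Omega$, gives
\[
a(u(t),w)=-\int_\Omega \Delta u(t)\,w\,dx .
\]
Hence the left-hand side of \eqref{eq:par_weak_form} equals
\[
\int_\Omega c^{-1}\bigl(\p_t u-c\Delta u\bigr)(t)\,w\,dx .
\]
Now split $\Omega$ into the non-contact set $\{u(t)>\tphi\}$ and the contact set $\{u(t)=\tphi\}$. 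On the first set, the complementarity relation in \eqref{eq:par_obstacle_strong} forces $\p_t u-c\Delta u=0$ a.e., so the integrand vanishes there. On the second set, $w=v(t)-\tphi\ge 0$ a.e. because $v\ge\tphi$, and $\p_t u-c\Delta u\ge 0$ with $c^{-1}>0$. The integrand is therefore nonnegative a.e., and integrating gives \eqref{eq:par_weak_form}.

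The only delicate points are measure-theoretic. First, $\cKpar_{\tphi,f}$ consists of $H^1$ space-time functions, so for a.e. $t$ the slice $v(t)$ lies in $H^1(\Omega)$, satisfies $v(t)\ge \tphi$ a.e., and has trace $f(t)$; this follows from Fubini and the slice properties of Sobolev functions. Second, the a.e.-in-$(t,x)$ relations restrict to a.e. $x$ for a.e. $t$, again by Fubini. I expect this bookkeeping to be the main, though minor, obstacle. If one prefers to avoid the strong form altogether, an alternative route is to test \eqref{eq:par_obstacle_weak} with $v'=u+c^{-1}(v-u)$ times a cutoff. However, $u+c^{-1}(v-u)$ need not stay above $\tphi$ when $c^{-1}>1$, so that route is less clean, and I would use the pointwise argument above.
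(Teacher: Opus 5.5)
Your proposal is correct and follows essentially the same route as the paper: use Fubini to restrict the a.e.\ relations of the strong form to a.e.\ time slice, then split $\Omega$ into the contact set, where $v(t)-u(t)\ge 0$ and $c^{-1}(\p_t u-c\Delta u)\ge 0$, and the non-contact set, where the operator vanishes. You also make explicit the Green's formula step $a(u(t),w)=-\int_\Omega \Delta u(t)\,w\,dx$ for $w=v(t)-u(t)\in H^1_0(\Omega)$, which the paper uses silently when it sums the two pieces.
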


\begin{proof}
Let
\[
D:=\{(t,x)\in [0,\infty)\times \Omega:
(\p_t-c\Delta)u<0
\text{ or }
(\p_tu-c\Delta u)\cdot (u-\tphi)\neq 0\},
\]
and
\[
D(t):=\{x\in \Omega:(t,x)\in D\}.
\]
Since $u$ solves \eqref{eq:par_obstacle_strong}, the set $D$ has measure zero. Fubini's theorem implies that for a.e. $t\ge 0$,
\[
|D(t)|=0.
\]
For such $t$, define the contact set
\[
E(t):=\{x\in \Omega: u(t,x)=\tphi(x)\}.
\]
Let $v\in K$. On $E(t)\setminus D(t)$ we have $v(t)-u(t)\ge 0$ and $c^{-1}(\p_tu-c\Delta u)\ge 0$, so
\[
\int_{E(t)\setminus D(t)} c^{-1}(\p_tu-c\Delta u)\cdot (v-u)\,dx\ge 0.
\]
On $(\Omega\setminus D(t))\setminus E(t)$ the obstacle is inactive, hence $(\p_t-c\Delta)u=0$ there and therefore
\[
\int_{(\Omega\setminus D(t))\setminus E(t)} c^{-1}(\p_tu-c\Delta u)\cdot (v-u)\,dx=0.
\]
Summing these identities gives
\[
\int_\Omega c^{-1}\p_tu(v-u)\,dx+\int_\Omega \nabla u\cdot \nabla(v-u)\,dx\ge 0,
\]
which is exactly \eqref{eq:par_weak_form}.
\end{proof}


We say that a function $h:\RR_+\to \RR$ converges essentially to $0$, denoted by
\[
\limess_{t\to\infty} h(t)=0,
\]
if there exists a set $N\subset \RR_+$ of measure zero such that
\[
\lim_{\substack{t\to\infty\\ t\notin N}} h(t)=0.
\]


\begin{lemma}
  \label{lm:H_1_asymptotic}
  Let $f\in \cApar_{\tphi,\tM}$ and let $g\in C^\infty(\p\Omega)$ satisfy $g>0$. Suppose $f(t,\cdot)=g$ for all $t\ge t_0>0$. Let $v_{\tphi}^g(t,\cdotp)$ solve \eqref{def-obstacle-intro} associated with obstacle $\tphi$ and boundary condition $g$. Then
  \[
  \limess_{t\to \infty}\norm{\p_tu_{\tphi}^f(t)}_{L^2(\Omega)}^2=0
  \]
  and
  \[
  \limess_{t\to \infty}\norm{u_{\tphi}^f(t)-v_{\tphi}^g}_{H^1(\Omega)}=0.
  \]
\end{lemma}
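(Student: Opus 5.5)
The plan is to use an energy (Lyapunov) argument on the interval $[t_0,\infty)$, where the boundary data are frozen at $g$. Write $u:=u_{\tphi}^f$ and $v:=v_{\tphi}^g$. For $t\ge t_0$ the time-independent elliptic solution $v$ is an admissible competitor in $\cKpar_{\tphi,f}$, restricted to times $t\ge t_0$; this uses $v\ge\tphi$ and $v|_{\p\Omega}=g=f(t)$. Likewise, $u(t)$ is admissible in the elliptic convex set $\cK_{\tphi,g}$.

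\textbf{Step 1: energy dissipation.} I would test the parabolic inequality \eqref{eq:par_weak_form} of Lemma \ref{lm:par_weak_form} with $u(t\pm h)$, which is allowed because the boundary values agree for $t\ge t_0$. Dividing by $h$ and letting $h\to0$ should give
\[
\int_\Omega c^{-1}|\p_t u|^2\,dx+\tfrac12\tfrac{d}{dt}A(u(t))=0 \quad\text{for a.e. } t\ge t_0 .
\]
To make this rigorous I would use the regularity $u\in W^{1,2}_p$ together with $\nabla u\in H^1([0,T];(H^1)^\ast)$ from Lemma \ref{lm:additional_regularity}. This regularity justifies $\frac{d}{dt}A(u)=2a(u,\p_tu)$ in the distributional sense, with boundary terms vanishing because $\p_tu=0$ on $\p\Omega$ for $t\ge t_0$. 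Integrating gives
\[
\int_{t_0}^\infty\|\p_tu\|^2_{L^2}\,dt\le C\,A(u(t_0))<\infty,
\]
and $A(u(t))$ is nonincreasing on $[t_0,\infty)$.

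\textbf{Step 2: comparison with the elliptic minimizer.} I would test the parabolic inequality with $v$, and the elliptic inequality \eqref{def-variational-intro} for $v$ with $u(t)$, then add the two. This yields
\[
A(u(t)-v)=a(u(t)-v,u(t)-v)\le \langle c^{-1}\p_tu(t),\,v-u(t)\rangle_{L^2}\le C\|\p_tu(t)\|_{L^2}\,\|u(t)-v\|_{L^2}.
\]
Since $u(t)-v\in H^1_0(\Omega)$, Poincaré's inequality gives $\|u(t)-v\|_{H^1}\le C\|\p_tu(t)\|_{L^2}$ for a.e. $t\ge t_0$. So the second claim follows from the first.

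\textbf{Step 3: essential decay of $\|\p_tu\|_{L^2}$.} This is the main obstacle. Integrability from Step 1 alone gives decay only along a sequence, not an essential limit. My first attempt would be to differentiate once more: test with difference quotients in time to show that $t\mapsto\int_\Omega c^{-1}|\p_tu|^2$ is essentially nonincreasing on $[t_0,\infty)$. Formally, $w=\p_tu$ satisfies $(\p_t-c\Delta)w\ge0$ on the contact region and $=0$ elsewhere, and monotonicity of the obstacle constraint gives
\[
\langle c^{-1}(\p_tu(t+h)-\p_tu(t)),\,u(t+h)-u(t)\rangle+A(u(t+h)-u(t))\le0 .
\]
This comes from adding the two variational inequalities at times $t$ and $t+h$, each tested with the other solution. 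The inequality says that $\|u(t+h)-u(t)\|_{L^2(c^{-1})}^2$ is nonincreasing in $t$. Dividing by $h^2$ then shows that $\|\p_tu(t)\|_{L^2(c^{-1})}$ is essentially nonincreasing. A nonincreasing function that is also square-integrable must tend to $0$, which gives the first limit. Step 2 then gives the second.

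If the difference-quotient argument fails to be justified, the fallback is a different route. Since $A(u(t))$ is monotone and bounded, it converges. Compactness in $H^1$, obtained from the $W^{1,2}_p$ bounds on unit time windows, combined with Step 2 along a good sequence, identifies the limit as $A(v)$. Then one recovers
\[
A(u(t)-v)=A(u(t))-A(v)-2a(v,u(t)-v)\le A(u(t))-A(v)+o(1)\to0,
\]
using the elliptic inequality $a(v,u(t)-v)\ge0$.
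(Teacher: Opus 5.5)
Your proposal is correct. Steps 1--2 coincide with the paper's argument: the dissipation identity comes, as in the paper, from testing \eqref{eq:par_weak_form} with the shifts $u(\tau\pm h)$. Your direct sum of the parabolic and elliptic inequalities, giving $A(u(t)-v)\le\langle c^{-1}\p_tu(t),v-u(t)\rangle_{L^2}$, is a slightly streamlined version of the paper's passage through $A(u(t))-A(v)$.

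The real difference is Step 3. The paper goes straight from $\int_t^\infty\|\p_tu(\tau)\|_{L^2}^2\,d\tau<\infty$ to $\limess_{t\to\infty}\|\p_tu(t)\|_{L^2}^2=0$. That does not follow from integrability alone: bumps of height one and widths $2^{-k}$ are square-integrable. You correctly flag this and supply the missing monotonicity, and your contraction argument is sound.
\begin{itemize}
\item The exceptional null set in Lemma \ref{lm:par_weak_form} comes from the strong form, so it does not depend on the test function.
\item For $t\ge t_0$ the states $u(t)$ and $u(t+h)$ have the same trace $g$, so each is an admissible competitor at the other's time.
\item Since $u(\cdot+h)-u(\cdot)\in H^1_{\mathrm{loc}}(t_0,\infty;L^2(\Omega))$, the summed inequality makes $\|u(t+h)-u(t)\|_{L^2(c^{-1})}$ nonincreasing.
\item Letting $h\to0$ at Lebesgue points of $\p_tu$ shows that $\|\p_tu(t)\|_{L^2(c^{-1})}$ is essentially nonincreasing on $[t_0,\infty)$.
\end{itemize}
Together with square-integrability this gives the first claim, and via Step 2 the second. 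This is the classical contraction property of gradient flows of convex functionals. It is what actually makes the lemma rigorous, including its later use of $\|\p_tu(t)\|_{L^2}\to0$ in Lemma \ref{lm:N_trace_asymptotic}.

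As for your fallback, the $H^1$-compactness is not needed, and the cited regularity does not give bounds uniform in time anyway. Choose $t_k\to\infty$ with $\|\p_tu(t_k)\|_{L^2}\to0$; Step 2 then gives $A(u(t_k))\to A(v)$. Monotonicity of $A(u(t))$ then yields $A(u(t)-v)\le A(u(t))-A(v)\to0$. This only proves the second claim, so your Step 3 argument is the one you need for the first.
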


\begin{proof}
For $t\ge t_0$, define
\[
A[u_{\tphi}^f](t):=a(u_{\tphi}^f(t),u_{\tphi}^f(t))
=\int_\Omega |\nabla u_{\tphi}^f(t,x)|^2\,dx.
\]
Since $u_{\tphi}^f\in L^2((0,T);H^1(\Omega))$ and $\p_tu_{\tphi}^f\in L^2((0,T);L^2(\Omega))$, we have $A[u_{\tphi}^f] \in W^{1,1}(0,T)$ and
\[
\p_tA[u_{\tphi}^f](t)=2a(\p_tu_{\tphi}^f(t),u_{\tphi}^f(t))
\]
for a.e. $t$; see \cite[Corollary 1.4.39]{CH98}.

For any fixed $t>t_0$, we can find $\delta>0$ such that $t>t_0+2\delta$.. Since $f(t) = g$ for all $t > t_0$, for every sufficiently small $h$ there exists $v_h^\pm\in \cKpar_{\tphi,f}$ such that
\[
v_h^\pm(\tau):=u_{\tphi}^f(\tau\pm h),
\qquad \tau\in (t-\delta,t+\delta).
\]
Inserting $v_h^\pm$ into \eqref{eq:par_weak_form}, we obtain
\[
\langle c^{-1}\p_tu_{\tphi}^f(\tau),u_{\tphi}^f(\tau\pm h)-u_{\tphi}^f(\tau)\rangle_{L^2(\Omega)}
+
a(u_{\tphi}^f(\tau),u_{\tphi}^f(\tau\pm h)-u_{\tphi}^f(\tau))
\ge 0
\]
for a.e. $\tau\in (t-\delta,t+\delta)$. Dividing by $h$ and letting $h\to 0^\pm$, again using \cite[Corollary 1.4.39]{CH98}, yields
\[
\langle c^{-1}\p_tu_{\tphi}^f(\tau),\p_tu_{\tphi}^f(\tau)\rangle_{L^2(\Omega)}
=
-\frac12 \p_tA[u_{\tphi}^f](\tau)
\]
for a.e. $\tau>t_0$. In particular, $A[u_{\tphi}^f]$ is nonincreasing on $(t_0,\infty)$, and for any $T>t>t_0$,
\begin{equation}
  \label{eq:energy_decay_parabolic}
  \int_t^T \langle c^{-1}\p_tu_{\tphi}^f(\tau),\p_tu_{\tphi}^f(\tau)\rangle_{L^2(\Omega)}\,d\tau
  =
  \frac12\left(A[u_{\tphi}^f](t)-A[u_{\tphi}^f](T)\right).
\end{equation}
Since $c$ is bounded away from zero and infinity, this implies
\[
\int_t^\infty \norm{\p_tu_{\tphi}^f(\tau)}_{L^2(\Omega)}^2\,d\tau<\infty
\]
for every $t>t_0$. Therefore
\[
\limess_{t\to\infty}\norm{\p_tu_{\tphi}^f(t)}_{L^2(\Omega)}^2=0.
\]

Now let $v_{\tphi}^g$ be the elliptic obstacle solution with boundary data $g$. For $t>t_0$, we have $u_{\tphi}^f(t)\in K_{\tphi}^g$, while $v_{\tphi}^g$ minimizes $A$ on $K_{\tphi}^g$. Hence
\[
A(v_{\tphi}^g)\le A(u_{\tphi}^f(t)).
\]
Moreover,
\begin{align*}
0
&\le A(v_{\tphi}^g-u_{\tphi}^f(t))\\
&=A(v_{\tphi}^g)-A(u_{\tphi}^f(t))-2a(v_{\tphi}^g-u_{\tphi}^f(t),u_{\tphi}^f(t)),
\end{align*}
so
\begin{equation}
  \label{eq:parabolic_compare1}
  A(u_{\tphi}^f(t))-A(v_{\tphi}^g)\le -2a(u_{\tphi}^f(t),v_{\tphi}^g-u_{\tphi}^f(t)).
\end{equation}
Choose $v\in K$ such that $v(t,\cdot)=v_{\tphi}^g$ for all $t>t_0$. Then Lemma \ref{lm:par_weak_form} gives
\[
-a(u_{\tphi}^f(t),v_{\tphi}^g-u_{\tphi}^f(t))
\le
\langle c^{-1}\p_tu_{\tphi}^f(t),v_{\tphi}^g-u_{\tphi}^f(t)\rangle_{L^2(\Omega)},
\]
and therefore
\begin{equation}
  \label{eq:parabolic_compare2}
  A(u_{\tphi}^f(t))-A(v_{\tphi}^g)
  \le
  2\langle c^{-1}\p_tu_{\tphi}^f(t),v_{\tphi}^g-u_{\tphi}^f(t)\rangle_{L^2(\Omega)}.
\end{equation}
On the other hand, since $v_{\tphi}^g$ solves the elliptic variational inequality \eqref{def-variational-intro},
\[
A(v_{\tphi}^g-u_{\tphi}^f(t))\le A(u_{\tphi}^f(t))-A(v_{\tphi}^g).
\]
Because $v_{\tphi}^g-u_{\tphi}^f(t)\in H_0^1(\Omega)$, coercivity and Poincar\'e's inequality imply
\begin{equation}
  \label{eq:parabolic_compare3}
  \norm{v_{\tphi}^g-u_{\tphi}^f(t)}_{H^1(\Omega)}
  \lesssim
  \bigl(A(u_{\tphi}^f(t))-A(v_{\tphi}^g)\bigr)^{1/2}.
\end{equation}
Combining \eqref{eq:parabolic_compare2} and \eqref{eq:parabolic_compare3}, we obtain
\[
\bigl(A(u_{\tphi}^f(t))-A(v_{\tphi}^g)\bigr)^{1/2}
\lesssim
\norm{\p_tu_{\tphi}^f(t)}_{L^2(\Omega)}.
\]
Since the right-hand side converges essentially to $0$, we conclude that
\[
A(u_{\tphi}^f(t))-A(v_{\tphi}^g)\to 0
\]
essentially as $t\to \infty$. Using \eqref{eq:parabolic_compare3} once more proves
\[
\limess_{t\to\infty}\norm{u_{\tphi}^f(t)-v_{\tphi}^g}_{H^1(\Omega)}=0.
\]
\end{proof}

\begin{lemma}
  \label{lm:N_trace_asymptotic}
  Let $f\in \cApar_{\tphi,\tM}$ and let $g\in C^\infty(\p\Omega)$ satisfy $g>0$. Suppose $f(t,\cdot)=g$ for $t\ge t_0>0$. Let $v_{\tphi}^g$ solve \eqref{def-obstacle-intro} associated with obstacle $\tphi$ and boundary condition $g$, then
  \[
  \limess_{t\to\infty}
  \norm{\p_\nu(u_{\tphi}^f(t)-v_{\tphi}^g)}_{H^{-1/2}(\p\Omega)}
  =0.
  \]
\end{lemma}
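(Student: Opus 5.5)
The idea is to express the normal derivative on $\p\Omega$ weakly, through Green's identity in a boundary collar where both functions solve linear equations, and then invoke the two essential limits of Lemma \ref{lm:H_1_asymptotic}.

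\textbf{Setup in the collar.} Set $w(t):=u_{\tphi}^f(t)-v_{\tphi}^g$. By the proof of Lemma \ref{lm:additional_regularity}, $u_{\tphi}^f\ge\min\{\tM,\min f\}>\delta$, so the obstacle is inactive in $[0,\infty)\times U_\eta$ for a collar $U_\eta$ as in \eqref{eq:def_collar_neighborhood}. Here $\eta$ is chosen small, uniformly in $t$, because $\inf f>0$ on $[0,\infty)\times\p\Omega$ (f is eventually equal to $g>0$). Likewise $v_{\tphi}^g$ is harmonic in $U_\eta$, since $g>0=\tphi|_{\p\Omega}$. Hence on $U_\eta$,
\[
\Delta w(t)=c^{-1}\p_t u_{\tphi}^f(t)\quad\text{for a.e. } t,
\]
and $w(t)$ is smooth up to $\p\Omega$ there by Lemma \ref{lm:additional_regularity}.

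\textbf{Weak Neumann trace.} For $\zeta\in H^{1/2}(\p\Omega)$, take $\Phi\in H^1(U_\eta)$ with $\Phi|_{\p\Omega}=\zeta$, $\Phi|_{\Upsilon_\eta}=0$ and $\|\Phi\|_{H^1(U_\eta)}\lesssim\|\zeta\|_{H^{1/2}(\p\Omega)}$. Green's identity in $U_\eta$ gives
\[
\langle \p_\nu w(t),\zeta\rangle_{\p\Omega}=\int_{U_\eta}\nabla w(t)\cdot\nabla\Phi\,dx+\int_{U_\eta}c^{-1}\p_tu_{\tphi}^f(t)\,\Phi\,dx .
\]
It is this identity, not any estimate of $\p_\nu w$ in its own right, that defines $\p_\nu w(t)$ in $H^{-1/2}$; it is justified for a.e. $t$ by the smoothness in the collar.

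\textbf{Estimate and conclusion.} From the identity,
\[
\|\p_\nu w(t)\|_{H^{-1/2}(\p\Omega)}\lesssim \|w(t)\|_{H^1(\Omega)}+\|\p_t u_{\tphi}^f(t)\|_{L^2(\Omega)},
\]
with the constant independent of $t$ because $c$ is bounded below and $\eta$ is fixed. Both terms on the right tend essentially to $0$ by Lemma \ref{lm:H_1_asymptotic}. The two exceptional null sets can be merged into a single null set, so the left side tends essentially to $0$.

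\textbf{Main obstacle.} The delicate point is the uniformity of the collar in $t\in[0,\infty)$, which is needed so that the constant in the estimate does not degrade as $t\to\infty$. Lemma \ref{lm:additional_regularity} was stated for finite $T$. However, the maximum-principle lower bound $u_{\tphi}^f>\delta$ and the choice of $\eta$ with $\tphi<\delta$ on $U_\eta$ depend only on $\inf f$ and $\tM$, so they carry over to all times.
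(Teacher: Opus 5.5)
Your proposal is correct and follows the paper's proof. Both arguments restrict to a boundary collar $U_\eta$ where $\Delta w(t)=c^{-1}\p_t u_{\tphi}^f(t)$, bound $\|\p_\nu w(t)\|_{H^{-1/2}(\p\Omega)}$ by $\|w(t)\|_{H^1(\Omega)}+\|\p_t u_{\tphi}^f(t)\|_{L^2(\Omega)}$, and conclude from Lemma \ref{lm:H_1_asymptotic}; you derive this trace bound explicitly via Green's identity, where the paper quotes it. Your observation that the collar can be chosen uniformly in $t$, because $\inf f>0$ on all of $[0,\infty)\times\p\Omega$, fills in a point the paper leaves implicit.
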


\begin{proof}
Let $U\subset \RR^n$ be a smooth domain. Since the trace map $H^1(U)\to H^{1/2}(\p U)$ is continuous and surjective, for any $w\in H^1(U)$ with $\Delta w\in L^2(U)$,
\[
\norm{\p_\nu w}_{H^{-1/2}(\p U)}
\lesssim
\norm{w}_{H^1(U)}+\norm{\Delta w}_{L^2(U)}.
\]
By the maximum principle,
\[
v_{\tphi}^g\ge \min_{x\in \p\Omega}g(x)>0
\qquad\text{in }\Omega.
\]
As in the proof of Lemma \ref{lm:additional_regularity}, there exists a collar neighborhood $U_\eta$ of $\p\Omega$ such that
\[
\Delta v_{\tphi}^g=0 \quad\text{in }U_\eta,
\qquad
(\p_t-c\Delta)u_{\tphi}^f=0 \quad\text{in }[0,\infty)\times U_\eta.
\]
Hence in $U_\eta$,
\[
\Delta u_{\tphi}^f(t)=c^{-1}\p_tu_{\tphi}^f(t).
\]
Applying the trace estimate on $U_\eta$ to
\[
w(t)=u_{\tphi}^f(t)-v_{\tphi}^g,
\]
and using Lemma \ref{lm:H_1_asymptotic}, we obtain
\begin{align*}
\limess_{t\to\infty}
\norm{\p_\nu w(t)}_{H^{-1/2}(\p\Omega)}
&\lesssim
\limess_{t\to\infty}
\left(
\norm{w(t)}_{H^1(U_\eta)}
+
\norm{\Delta w(t)}_{L^2(U_\eta)}
\right)\\
&\le
\limess_{t\to\infty}
\left(
\norm{w(t)}_{H^1(\Omega)}
+
\norm{\p_tu_{\tphi}^f(t)}_{L^2(\Omega)}
\right)
=0.
\end{align*}
\end{proof}
Finally, we note that Proposition \ref{thm:reduction} follows immediately from Lemma \ref{lm:N_trace_asymptotic}. And Theorem \ref{thm:main} is proved by combining Proposition \ref{thm:reduction} and Theorem \ref{thm:ellptic_obstacle}.




\appendix

\subsection*{Acknowledgements}
C.C. was supported by NSTC grant  113-2115-M-A49-018-MY3.
M.L. was partially supported by a AdG project 101097198 of the European Research Council, Centre of Excellence of Research Council of Finland and the FAME flagship of the Research Council of Finland (grant 359186).
L.O. was supported by the European Research Council of the European Union, grant 101086697 (LoCal), and the Research Council of Finland, grants 359182, 347715 and 353096.
Z.Z. was supported by the Finnish Ministry of Education and Culture’s Pilot for Doctoral Programmes (Pilot project Mathematics of Sensing, Imaging and Modelling).
Views and opinions expressed are those of the authors only and do not necessarily reflect those of the funding agencies or the EU. Neither the European Union nor the granting authority can be held responsible for them.

\bibliography{reference}
\bibliographystyle{plain}

\end{document}